\newtheoremstyle{thm}{}{}{\itshape}{}{\scshape}{.}{ }{}
\theoremstyle{thm}
\newtheorem{thm}{Theorem}[section]
\newtheorem{lem}[thm]{Lemma}
\newtheorem{prop}[thm]{\normalfont\scshape Proposition}
\newtheorem{cor}[thm]{\normalfont\scshape Corollary}
\renewcommand*{\mod}{\mathrm{mod}\,}
\newcommand{\ind}{\mathrm{ind}\,}
\newcommand{\rad}{\mathrm{rad}}
\renewcommand*{\dim}{\mathrm{dim}}
\newcommand{\add}{\mathrm{add}}
\newcommand{\Tr}{\mathrm{Tr}}
\newcommand{\Hom}{\mathrm{Hom}}
\newcommand{\End}{\mathrm{End}}
\newcommand{\op}{\mathrm{op}}
\newcommand{\D}{\mathrm{D}}
\newcommand{\id}{\mathrm{id}}
\newcommand{\pd}{\mathrm{pd}}
\newcommand{\Ext}{\mathrm{Ext}}
\newcommand{\soc}{\mathrm{soc}}
\newcommand{\res}{\mathrm{res}}
\DeclareMathOperator{\ad}{ad }
\newcommand{\supp}{\mathrm{supp} \hspace{1mm}}
\renewcommand*{\top}{\mathrm{top}}
\renewcommand*{\Im}{\mathrm{Im}}
\newcommand{\ann}{\mathrm{ann}}
\title{{ \Large Selfinjective algebras having a generalized standard family of quasi-tubes
maximally saturated by simple and projective modules}}
\date{}
\author{\normalsize Alicja Jaworska-Pastuszak $^{a,}$\footnote{Corresponding author} , Marta Kwiecie\'n $^{b}$, Andrzej Skowro\'nski $^{a}$}
\begin{document}
\baselineskip=17pt \maketitle \noindent
{\footnotesize $^{a}$
Faculty of Mathematics and Computer Science, Nicolaus
Copernicus University, Chopina 12/18, 87-100 Toru\'n, Poland\\
$^{b}$ Faculty of Mathematics and Computer Science, University of
Warmia and Mazury, ul. S{\l}oneczna 54, 10-710 Olsztyn, Poland}

\footnotetext{E-mail addresses:\\ jaworska@mat.uni.torun.pl (A.
Jaworska-Pastuszak), marta.kwiecien@uwm.edu.pl (M. Kwiecie\'n),
skowron@mat.uni.torun.pl (A. Skowro\'nski)\\
Supported by the Research Grant No. DEC-2011/02/A/ST1/00216 of the
National Science Center Poland.}
\begin{abstract} We give a complete description of finite dimensional
selfinjective algebras over an algebraically closed field whose
Auslander-Reiten quiver admits a  generalized standard family of
quasi-tubes maximally saturated by simple and projective modules.
In particular, we show that these algebras are selfinjective
algebras of strictly canonical type.
\end{abstract}
MSC: 16D50, 16G20, 16G70\\
Keywords: selfinjective algebra, canonical algebra, Galois
covering, repetitive algebra, Auslander-Reiten quiver, quasi-tube

\section{Introduction and the main results}

Throughout the article $K$ will denote a fixed algebraically
closed field. By an algebra is meant an associative finite
dimensional $K$-algebra with an identity, which we shall assume
(without loss of generality) to be basic and indecomposable. For
an algebra $A$, we denote by $\mod A$ the category of finite
dimensional (over $K$) right $A$-modules, by $\ind A$ its full
subcategory formed by the indecomposable modules, and by $\D :
\mod A \rightarrow \mod A^{\op}$ the standard duality
$\Hom_K(-,K)$. Given a module $M$ in $\mod A$, we denote by $[M]$
the image of $M$ in the Grothendieck group $K_0(A)$ of $A$. Thus
$[M]=[N]$ if and only if the modules $M$ and $N$ have the same
simple composition factors including the multiplicities. An
algebra $A$ is called {\it selfinjective} if $A_A$ is an injective
module, or equivalently, the projective and injective modules in
$\mod A$ coincide. An important class of selfinjective algebras is
formed by the orbit algebras $R/G$, where $R$ is a selfinjective
locally bounded $K$-category and $G$ is an admissible group of
automorphisms of $R$. Then we have a Galois covering $R
\rightarrow R/G$ which frequently  allows us to reduce the
representation theory of $R/G$ to the representation theory of
$R$. In the theory, the selfinjective orbit algebras
$\widehat{B}/G$ given by the repetitive categories $\widehat{B}$
of triangular algebras $B$ and infinite cyclic admissible groups
$G$ of automorphisms of $\widehat{B}$ are of particular interest.
We also note that for the algebras $B$ of finite global dimension,
the stable module category $\underline{\mod} \widehat{B}$ of
$\widehat{B}$ is equivalent (as a triangulated category) to the
derived category $\D^b (\mod B)$ of bounded complexes over $\mod
B$ \cite{Ha}.

An important combinatorial and homological invariant of the module
category $\mod A$ of an algebra $A$ is its Auslander-Reiten
quiver. The Auslander-Reiten quiver $\Gamma_A$ describes the
structure of the quotient category $\mod A / \rad^{\infty}_A$,
where $\rad^{\infty}_A$ is the infinite Jacobson radical of $\mod
A$ (the intersection of all powers $\rad^i_A$, $i \geqslant 1$, of
the Jacobson radical $\rad _A$ of $\mod A$). By a result due to
Auslander \cite{Au}, $A$ is of finite representation type if and
only if $\rad^{\infty}_A=0$ (see also \cite{KS} for an alternative
proof of this result). On the other hand, if $A$ is of infinite
representation type, then $(\rad^{\infty}_A)^2 \neq 0$, by a
result proved in \cite{CMMS}.
 In general, it is important to study
the behaviour of the components of $\Gamma_A$ in the category
$\mod A$. Following \cite{S1}, a family $\mathcal{C}=
(\mathcal{C}_{\lambda})_{\lambda \in \Lambda}$ of components of
$\Gamma_A$ is said to be {\it generalized standard} if
$\rad^{\infty}_A(X,Y)=0$ for all modules $X$ and $Y$ in
$\mathcal{C}$. It was proved in \cite[(2.3)]{S1} that every
generalized standard family $\mathcal{C}$ of components in
$\Gamma_A$ is almost periodic, that is, all but finitely many
$\D\Tr$-orbits in $\mathcal{C}$ are periodic. In particular, for a
selfinjective algebra $A$, every infinite generalized standard
component $\mathcal{C}$ of $\Gamma_A$ is either acyclic with
finitely many $\D\Tr$-orbits or a quasi-tube (the stable part
$\mathcal{C}^s$ of $\mathcal{C}$ is a stable tube
$\mathbb{ZA}_{\infty}/ (\tau^r)$, for some $r \geqslant 1$).

In the paper we are concerned  with the structure of selfinjective
algebras $A$ for which the Auslander-Reiten quiver $\Gamma_A$
admits a generalized standard component. A distinguished class of
such algebras is formed by the selfinjective algebras of finite
representation type. By general theory (see \cite[Section 3]{S4})
these algebras are socle deformations of the orbit algebras
$\widehat{B}/G$, for tilted algebras $B$ of Dynkin type and
infinite cyclic groups $G$ of automorphisms of $\widehat{B}$.
Further, it was proved in \cite{SY1, SY4, SY2} that every
selfinjective algebra $A$ having an acyclic generalized standard
component in $\Gamma_A$ is of the form $\widehat{B}/ G$, for a
tilted algebra $B$ of Euclidean or wild type and an infinite
cyclic group $G$ of automorphisms of $\widehat{B}$. On the other
hand, the description of selfinjective algebras $A$ whose
Auslander-Reiten quiver admits a generalized standard quasi-tube
is an exciting but difficult problem. Namely, every algebra
$\Lambda$ is a quotient algebra of a selfinjective algebra $A$
with $\Gamma_A$ having a generalized standard stable tube (see
\cite{S1}, \cite{S2}). We refer to \cite{BSY}, \cite{Ka},
\cite{KaSY}, \cite{KeSY} for some work on the structure of
selfinjective
algebras having generalized standard families of quasi-tubes.\\

In order to formulate the main result of the paper we need to
present some concept.

Let $A$ be an algebra.
Recall that a {\it smooth} quasi-tube is a quasi-tube whose all nonstable vertices are projective-injective.
For a smooth quasi-tube $\mathcal{T}$
of $\Gamma_A$ we denote by $s(\mathcal{T})$ the number of simple
modules in $\mathcal{T}$, by $p(\mathcal{T})$ the number of
projective modules in $\mathcal{T}$, and by $r(\mathcal{T})$ the
rank of the stable tube $\mathcal{T}^s$. Obviously, if $A$ is selfinjective then each quasi-tube is smooth. Moreover, in this case we know that $s(\mathcal{T})+ p(\mathcal{T}) \leqslant r(\mathcal{T}) -1$ for any quasi-tube $\mathcal{T}$ in $\Gamma_A$ \cite{MS}. A family
$\mathcal{C}=(\mathcal{C}_{\lambda})_{\lambda \in \Lambda}$ of
smooth quasi-tubes in $\Gamma_A$ is said to be {\it maximally saturated
by simple and projective modules} if there exist
two simple right $A$-modules $S$ and $T$ which are not in $\mathcal{C}$
and the following conditions are satisfied:
\begin{enumerate}[\rm (MS1)]
\item $s(\mathcal{C}_{\lambda})+p(\mathcal{C}_{\lambda})=r (\mathcal{C}_{\lambda})-1$ for any ${\lambda} \in \Lambda$;
\item the simple composition factors of indecomposable modules in $\mathcal{C}$
are $S$, $T$, the simple modules in $\mathcal{C}$, and the socles
and tops of indecomposable projective modules in $\mathcal{C}$;
\item $\mathcal{C}=(\mathcal{C}_{\lambda})_{\lambda \in \Lambda}$ consists of
 all quasi-tubes such that $\mathcal{C}_{\lambda}$ admits an indecomposable module $E_{\lambda}$ with $\soc E_{\lambda}=S$ and $\top
 E_{\lambda}=T$.

\end{enumerate}
In particular, if $p(\mathcal{C}_{\lambda})=0$ for any $\lambda
\in \Lambda$, we say that a family
$\mathcal{C}=(\mathcal{C}_{\lambda})_{\lambda \in \Lambda}$ of
stable tubes is {\it maximally saturated by simple modules}.

The following main result of the paper describes the structure of
all selfinjective algebras whose Auslander-Reiten quiver admits a
generalized standard family of quasi-tubes maximally saturated by
simple and projective modules.

\begin{thm} \label{thm}
Let $A$ be  a basic, connected, finite dimensional selfinjective
algebra over an algebraically closed field $K$. The following
statements are equivalent.
\begin{enumerate}[\rm (i)]
\item $\Gamma_A$ admits a generalized standard family $\mathcal{C}=(\mathcal{C}_{\lambda})_{{\lambda} \in \Lambda}$
of quasi-tubes maximally saturated by simple and projective
modules.
\item $A$ is isomorphic to an orbit algebra $\widehat{B}/G$ of the repetitive category $\widehat{B}$ of a branch extension $B$
of a canonical algebra $C$ with respect to the canonical
$\mathbb{P}_1(K)$-family of stable tubes of $\Gamma_C$ and $G$ is an infinite
cyclic group of automorphisms of $\widehat{B}$ of one of the
forms:
\begin{enumerate}[\rm (a)]
\item $G=(\varphi \nu_{\widehat{B}})$, for $\varphi$ a strictly positive automorphism of $\widehat{B}$,
\item $G=(\varphi \nu_{\widehat{B}})$, for $B$ a canonical algebra  and $\varphi$ a rigid automorphism of $\widehat{B}$,
\end{enumerate}
where $\nu_{\widehat{B}}$ is the Nakayama automorphism of $\widehat{B}$.
\end{enumerate}
\end{thm}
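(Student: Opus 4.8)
The plan is to prove the two implications separately; the substance lies in $\mathrm{(i)}\Rightarrow\mathrm{(ii)}$, whereas $\mathrm{(ii)}\Rightarrow\mathrm{(i)}$ amounts to transporting known structure through a Galois covering.

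For $\mathrm{(ii)}\Rightarrow\mathrm{(i)}$ I would start from the known shape of $\Gamma_{\widehat B}$, for $\widehat B$ the repetitive category of a branch extension $B$ of a canonical algebra $C$: inside each copy of a tubular slice of $B$ the canonical $\mathbb P_1(K)$-family of stable tubes of $\Gamma_C$ reappears, and after absorbing the projective-injective modules coming from the branches it becomes a family $\widehat{\mathcal C}$ of quasi-tubes. Since $G=(\varphi\nu_{\widehat B})$ is infinite cyclic and admissible, the push-down functor $F_\lambda\colon\mod\widehat B\to\mod A$ of the Galois covering $\widehat B\to\widehat B/G=A$ preserves almost split sequences, so it carries $\widehat{\mathcal C}$ onto a family $\mathcal C=(\mathcal C_\lambda)_{\lambda\in\Lambda}$ of quasi-tubes in $\Gamma_A$. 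One then verifies (MS1) by a rank count --- the identification prescribed by $G$ glues the two ends of a fundamental domain, turning the bound of \cite{MS} into an equality; (MS2) by following composition factors through $F_\lambda$, the two distinguished simples $S$ and $T$ being precisely the source and target of the new arrows created by the orbit identification; and (MS3) because the modules with socle $S$ and top $T$ occur only in these components. Generalized standardness of $\mathcal C$ is then inherited through $F_\lambda$ from that of the canonical tubular family in $\Gamma_C$.

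For $\mathrm{(i)}\Rightarrow\mathrm{(ii)}$ I would argue in four steps. First, by \cite[(2.3)]{S1} the family $\mathcal C$ is almost periodic, so each $\mathcal C_\lambda$ is a quasi-tube, and selfinjectivity forces its non-stable vertices to be projective-injective; the equality (MS1), read against the bound of \cite{MS}, says each $\mathcal C_\lambda$ is maximally enlarged from its stable tube $\mathcal C_\lambda^s$, and combining this with (MS2) and the mouth combinatorics of the tubes pins down, in each $\mathcal C_\lambda$, the module $E_\lambda$ and the supports of all modules on the mouth. Second, the two simples $S$ and $T$, which lie outside $\mathcal C$, are used to ``open up'' $A$: excising the arrows between the vertices supporting $S$ and $T$ should yield a non-selfinjective algebra $B$ in whose module category the stable parts $\mathcal C_\lambda^s$ survive as a sincere separating family of quasi-tubes indexed by $\Lambda\cong\mathbb P_1(K)$, and recognizing $B$ as a branch extension of a canonical algebra $C$ with respect to its canonical family then follows from the homological characterization of such algebras through the existence of a sincere separating $\mathbb P_1(K)$-family together with the postprojective/preinjective behaviour of $\mod B$ away from it.

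Third, since $\mathcal C$ is generalized standard the relevant part of $\rad^{\infty}_A$ vanishes, so that the combinatorics of $\Gamma_A$ lifts, and using the machinery relating selfinjective algebras with generalized standard components to orbit algebras of repetitive categories (as developed in \cite{SY1}, \cite{SY4}, \cite{SY2} for the acyclic case, here adapted to quasi-tubes) I would realize $A$ as a Galois orbit algebra $\widehat B/G$ with $G$ infinite cyclic, for the algebra $B$ found in the second step. Fourth, writing the defining automorphism as $\varphi\nu_{\widehat B}$ with $\nu_{\widehat B}$ the Nakayama automorphism forced by selfinjectivity, I would analyse $\varphi$ on the $\mathbb Z$-grading of $\widehat B$: generically $\varphi$ must shift the copies of $B$ strictly forward, yielding case (a), while a rigid $\varphi$ is possible only when there are no branches obstructing the identification, that is when $B=C$ is already canonical, yielding case (b). I expect the second step to be the main obstacle: passing from purely Auslander-Reiten-theoretic hypotheses to an explicit bound-quiver description of a branch extension of a canonical algebra, and in particular verifying the sincerity and separation properties that isolate $C$. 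The third step is also delicate, since one must check that the constructed covering is admissible and Galois and that the orbit identification reproduces exactly the two simple modules $S$ and $T$ prescribed by (MS2)--(MS3).
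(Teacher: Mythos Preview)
Your high-level architecture is right---reduce to identifying a branch extension $B$ of a canonical algebra, then invoke the Skowro\'nski--Yamagata criterion (Theorem~\ref{SY}) to realize $A$ as $\widehat B/(\varphi\nu_{\widehat B})$, and finally analyse $\varphi$---but two of your steps are either misdirected or miss the main technical content.

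\textbf{Step 2 (construction of $B$).} You propose to ``excise the arrows between the vertices supporting $S$ and $T$'' to obtain $B$. This is not how the paper proceeds, and it is unclear that such an operation is even well-defined at the level of bound quivers. The paper instead passes to successive \emph{quotient} algebras: first $D=A/\ann_A(\mathcal C)$, so that $\mathcal C$ becomes faithful in $\Gamma_D$; then $C=D/\ann_D(T(\mathcal C))$, where $T(\mathcal C)$ is the family of modules corresponding to the underlying stable tubes. At this point the paper invokes its own Theorem~\ref{thm1}, which characterizes canonical algebras intrinsically as precisely those algebras admitting a faithful generalized standard family of stable tubes maximally saturated by simple modules---this result is proved from scratch in Section~2 and is the substitute for the ``homological characterization through separating families'' you allude to. Once $C$ is canonical, $D$ is a quasi-tube enlargement of $C$, and $B$ is recovered as the unique maximal branch extension of $C$ inside $D$ (via \cite[(3.5)]{AST}), so that $B=A/I$ with $I=\ann_A(\mathcal T^B)$.

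\textbf{Step 3 (applying the SY criterion).} This is where the real work lies, and your proposal does not touch it. To apply Theorem~\ref{SY} one must verify that $r_A(I)=eI$ for the residual identity $e$ of $B$. The paper does this by introducing the trace ideals $J=\sum_{h}\Im h$ over $h\in\Hom_A(Y,A_A)$, $Y\in\mathcal T^B$, and the dual $J'$, and then proving (Propositions~6.1--6.4) that $\ell_A(I)=J=Ie$ and $r_A(I)=J'=eI$. The crux is Proposition~\ref{prop 6.3}, showing $eIe=eJe=eJ'e$; its proof is a five-case analysis on the position of hypothetical extra arrows in $Q_{eAe/eJ}$ relative to $Q_B$, using restriction functors, Galois coverings of $B'$, and explicit computations with the mouth modules $E^{(\lambda)}$ of Section~2. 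The paper explicitly notes that this argument differs from the analogous one in \cite{KaSY} because here $\mathcal C$ is only assumed generalized standard, not free of infinite short cycles. None of this machinery appears in your outline; ``adapting the acyclic-case machinery to quasi-tubes'' is exactly the content of Section~6, and it is not automatic.
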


We note that the selfinjective algebras occuring  in the second
statement of the above theorem form a class of selfinjective
algebras of strictly canonical type investigated in \cite{KS1},
\cite{KS2}, \cite{KS3}. In particular, the structure and
homological properties of the Auslander-Reiten quivers of
selfinjective algebras of strictly canonical type were described
in \cite{KS1}.

The following direct consequence of Theorem 1.1 and \cite[Theorem
2]{OTY} provides a characterization of the trivial extensions of the
canonical algebras by their minimal injective cogenerators.

\begin{cor} \label{1.2}
Let $A$ be  a basic, connected, finite dimensional symmetric
algebra over an algebraically closed field $K$. The following
statements are equivalent.
\begin{enumerate}[\rm (i)]
\item $\Gamma_A$ admits a generalized standard family $\mathcal{C}=(\mathcal{C}_{\lambda})_{{\lambda} \in \Lambda}$
of quasi-tubes maximally saturated by simple and projective
modules.
\item $\Gamma_A$ admits a generalized standard family $\mathcal{T}=(\mathcal{T}_{\lambda})_{{\lambda} \in \Lambda}$
of stable tubes maximally saturated by simple modules.
\item $A$ is isomorphic to the trivial extension $B \ltimes D(B)$ of
a canonical algebra $B$.
\end{enumerate}
\end{cor}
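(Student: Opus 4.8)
The plan is to establish the cycle of implications (i) $\Rightarrow$ (iii) $\Rightarrow$ (ii) $\Rightarrow$ (i). The implication (ii) $\Rightarrow$ (i) is immediate from the definitions: a stable tube is a quasi-tube all of whose vertices are stable, hence in particular a smooth quasi-tube, and for such a family the conditions (MS1)--(MS3) read with $p(\mathcal{C}_{\lambda})=0$ for all $\lambda$ are precisely the conditions defining a family maximally saturated by simple modules; thus a generalized standard family $\mathcal{T}=(\mathcal{T}_{\lambda})_{\lambda\in\Lambda}$ of stable tubes maximally saturated by simple modules is already a generalized standard family of quasi-tubes maximally saturated by simple and projective modules. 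The remaining two arrows rest on Theorem~\ref{thm}: the arrow (iii) $\Rightarrow$ (ii) together with the known structure of the regular components of canonical algebras and of $\Gamma_A$ from \cite{KS1}, and the nontrivial arrow (i) $\Rightarrow$ (iii) together with \cite[Theorem~2]{OTY} to detect symmetry inside the orbit-algebra description supplied by Theorem~\ref{thm}.

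For (iii) $\Rightarrow$ (ii), suppose $A\cong B\ltimes D(B)$ for a canonical algebra $B$; equivalently $A\cong\widehat{B}/(\nu_{\widehat B})$, which is one of the algebras of Theorem~\ref{thm}(ii)(b) (case $\varphi=\id$). The canonical $\mathbb{P}_{1}(K)$-family of stable tubes of $\Gamma_{B}$ is generalized standard, and under the canonical embedding $\mod B\hookrightarrow\mod A$ it gives rise to a family $\mathcal{T}$ of stable tubes of $\Gamma_{A}$ which, by the description of $\Gamma_A$ for selfinjective algebras of canonical type in \cite{KS1}, is again generalized standard and contains no projective (hence no nonstable) modules. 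One then verifies (MS1)--(MS3) with all $p(\mathcal{T}_{\lambda})=0$: (MS1) follows because for a canonical algebra of type $(p_{1},\dots,p_{n})$ the tube of rank $p_{i}$ contains exactly $p_{i}-1$ simple modules and the tubes of rank $1$ contain none; (MS2) follows from the list of composition factors of the regular $B$-modules, the two distinguished simples $S,T$ being those at the source and the sink of the canonical quiver; and (MS3) follows by taking $E_{\lambda}$ to be the appropriate module of defect zero in $\mathcal{T}_{\lambda}$ with $\soc E_{\lambda}=S$ and $\top E_{\lambda}=T$. Hence $\mathcal{T}$ is a generalized standard family of stable tubes maximally saturated by simple modules, which is (ii).

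For (i) $\Rightarrow$ (iii), a symmetric algebra is selfinjective, so Theorem~\ref{thm} applies and gives $A\cong\widehat{B}/G$ with $B$ a branch extension of a canonical algebra $C$ (with respect to the canonical $\mathbb{P}_{1}(K)$-family of stable tubes of $\Gamma_{C}$) and $G=(\varphi\nu_{\widehat B})$, with $\varphi$ strictly positive, or with $B$ canonical and $\varphi$ rigid. The Nakayama automorphism of $\widehat{B}/(\varphi\nu_{\widehat B})$ is the automorphism induced by $\varphi$, so symmetry of $A$ forces it to be inner; in particular the Nakayama permutation of $A$ is trivial. By \cite[Theorem~2]{OTY}, symmetry of such an orbit algebra of the repetitive category of a triangular algebra is equivalent to $\varphi$ being rigid together with the accompanying normalization condition, in which case the algebra is isomorphic to a trivial extension $B_{\varphi}\ltimes D(B_{\varphi})$ of the $\varphi$-twist $B_{\varphi}$ of $B$. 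Since a strictly positive automorphism is never rigid, case (a) of Theorem~\ref{thm}(ii) cannot occur; hence $B=C$ is canonical, and as any automorphism of a canonical algebra permutes arms of equal length and rescales arrows, the twist $C_{\varphi}$ is again a canonical algebra. Therefore $A\cong C_{\varphi}\ltimes D(C_{\varphi})$ is the trivial extension of a canonical algebra, which is (iii), and the cycle is complete.

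The step I expect to be the main obstacle is this last arrow (i) $\Rightarrow$ (iii): extracting cleanly from the symmetry hypothesis, via \cite[Theorem~2]{OTY}, that $\varphi$ must be rigid (so that the strictly positive case is excluded) and that the underlying branch extension $B$ must degenerate to a canonical algebra, and then confirming that the resulting trivial extension is that of a genuinely \emph{canonical} algebra rather than of a larger branch extension. The verification of (MS1)--(MS3) for the lifted canonical family in (iii) $\Rightarrow$ (ii) is routine but must be carried out carefully using the explicit shape of the regular components of canonical algebras recorded in \cite{KS1}.
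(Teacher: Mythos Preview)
Your proposal is correct and follows the paper's approach: Corollary~\ref{1.2} is recorded there simply as a direct consequence of Theorem~\ref{thm} and \cite[Theorem~2]{OTY}, and your cycle (i)$\Rightarrow$(iii)$\Rightarrow$(ii)$\Rightarrow$(i) is exactly the intended unpacking, with (iii)$\Rightarrow$(ii) also available more directly from Propositions~\ref{prop1} and~\ref{prop2} together with the observation in the proof of Proposition~\ref{prop2} that $\mathcal{C}_0$ contains no projective-injective modules precisely when $B=C$. The only unnecessary detour is the passage through a twist $C_\varphi$ in (i)$\Rightarrow$(iii): once symmetry and \cite[Theorem~2]{OTY} exclude the strictly positive case and force case~(b) of Theorem~\ref{thm}(ii), one obtains $A\cong B\ltimes D(B)$ with $B$ canonical directly.
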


The paper is organized in the following way. In Section 2 we
recall the canonical algebras and describe their canonical family
of stable tubes. Section 3 presents quasi-tube enlargements of
algebras. In Section 4 we show the needed facts on repetitive
algebras and their orbit algebras. Section 5 contains results on
selfinjective algebras of strictly canonical type which allow us
to state that the implication (ii) $\Rightarrow$ (i) of Theorem
1.1 is true. In Section 6 we complete the proof of Theorem 1.1.
The final Section 7 is devoted to some examples illustrating the
main results of the paper.

For basic background on the representation theory of algebras
applied in the paper we refer to the books \cite{ASS}, \cite{Ri},
\cite{SS1}, \cite{SS2}, \cite{SY}, \cite{SY0_5}.
\section{Canonical algebras}

The aim of this section is to introduce the canonical algebras  and show
that they are exactly the  algebras whose Auslander-Reiten quiver contains
a faithful generalized standard family of stable tubes maximally saturated
by simple modules.\\

Throughout the paper for a vertex $x$ in the Gabriel quiver $Q_A$
of an algebra $A$, by $S(x)$, $P(x)$ and $I(x)$ we denote simple,
indecomposable projective and indecomposable injective $A$-module
at vertex $x$, respectively. Moreover, by $M=(M_{a},
M_{\alpha})_{a \in Q_0, \alpha \in Q_1}$, where $Q_0$ is a set of
vertices and $Q_1$ is a set of arrows in $Q_A$,  we denote a
$K$-linear representation of $Q_A$. If $I$ is an admissible ideal
of $KQ_A$, then by representation $M=(M_a, M_{\alpha})$ of $Q_A$
we will mean a representation satisfying the relations in $I$.

Let $m\geq 2$ be an integer number,
$\hbox{\boldmath$p$}=(p_1,\ldots, p_m)$ a sequence of positive
integer numbers and $\hbox{\boldmath$\lambda$}
=(\lambda_1,\ldots,\lambda_m)$ a sequence of pairwise different
elements of the projective line $\mathbb{P}_1(K)=K\cup\{\infty\}$
normalized in such a way that $\lambda_1=\infty$ and
$\lambda_2=0$.
Consider the quiver $\Delta(\hbox{\boldmath$p$})$ of the form\\

\unitlength1cm
\begin{center}

\begin{picture}(9,2.3)
\multiput(1.6,-0.3)(1.6,0){2}{$\circ$}
\multiput(0,1)(1.6,0){3}{$\circ$}
\multiput(1.6,2)(1.6,0){2}{$\circ$}
\multiput(5.2,-0.3)(1.6,0){2}{$\circ$}
\multiput(5.2,1)(1.6,0){3}{$\circ$}
\multiput(5.2,2)(1.6,0){2}{$\circ$}
\multiput(3,-0.2)(3.6,0){2}{\vector(-1,0){1}}
\multiput(3,1.1)(3.6,0){2}{\vector(-1,0){1}}
\multiput(3,2.1)(3.6,0){2}{\vector(-1,0){1}}
\put(1.4,1.1){\vector(-1,0){1}}
\put(8.2,1.1){\vector(-1,0){1}}
\multiput(5.1,-0.2)(-1.2,0){2}{\vector(-1,0){0.35}}
\multiput(5.1,1.1)(-1.2,0){2}{\vector(-1,0){0.35}}
\multiput(5.1,2.1)(-1.2,0){2}{\vector(-1,0){0.35}}

\put(1.4,2){\vector(-4,-3){1}}
\put(1.4,-0.1){\vector(-1,1){1}}
\put(8.2,1.3){\vector(-4,3){1}}
\put(8.2,0.9){\vector(-1,-1){1}}

\multiput(1.7,0.3)(1.6,0){2}{$\vdots$}
\multiput(5.3,0.3)(1.6,0){2}{$\vdots$} \put(4,-0.2){$\ldots$}
\put(4,1.1){$\ldots$} \put(4,2.1){$\ldots$}
\put(2.3,-0.45){{\tiny{$\alpha_{m,2}$}}}
\put(2.3,1.23){\tiny{$\alpha_{2,2}$}}
\put(2.3,2.23){\tiny{$\alpha_{1,2}$}}
\put(5.4,-0.45){{\tiny{$\alpha_{m,p_m-1}$}}}
\put(5.5,1.23){\tiny{$\alpha_{2,p_2-1}$}}
\put(5.5,2.23){\tiny{$\alpha_{1,p_1-1}$}}
\put(0.7,1.23){\tiny{$\alpha_{2,1}$}}
\put(7.6,1.23){\tiny{$\alpha_{2,p_2}$}}
\put(0.5,1.8){\tiny{$\alpha_{1,1}$}}
\put(0.3,0.2){\tiny{$\alpha_{m,1}$}}
\put(7.6,1.8){\tiny{$\alpha_{1,p_1}$}}
\put(7.7,0.2){\tiny{$\alpha_{m,p_m}$}} \put(-0.3,1){$0$}
\put(8.7,1){$\omega .$} \put(1.4,2.3){\tiny{$(1,1)$}}
\put(3,2.3){\tiny{$(1,2)$}} \put(6.6,2.3){\tiny{$(1,p_1-1)$}}
\put(1.4,1.3){\tiny{$(2,1)$}} \put(3,1.3){\tiny{$(2,2)$}}
\put(6.45,1.3){\tiny{$(2,p_2-1)$}} \put(1.4,-0.54){\tiny{$(m,1)$}}
\put(3,-0.54){\tiny{$(m,2)$}} \put(6.6,-0.54){\tiny{$(m,p_m-1)$}}
\end{picture}
\end{center}
\vspace{0.6cm}

\noindent For $m=2$, $C(\hbox{\boldmath$p$}, \hbox{\boldmath$\lambda$})$ is defined to be the path algebra $K\Delta(\hbox{\boldmath$p$})$ of the quiver $\Delta(\hbox{\boldmath$p$})$ over $K$. For $m\geq 3$, $C(\hbox{\boldmath$p$}, \hbox{\boldmath$\lambda$})$ is defined to be the quotient algebra $K\Delta(\hbox{\boldmath$p$})/I(\hbox{\boldmath$p$}, \hbox{\boldmath$\lambda$})$ of the path algebra $K\Delta(\hbox{\boldmath$p$})$ by the ideal $I(\hbox{\boldmath$p$}, \hbox{\boldmath$\lambda$})$ of $K\Delta(\hbox{\boldmath$p$})$ generated by the elements
$$\alpha_{j,p_j}\ldots \alpha_{j,1}+\alpha_{1,p_1}\ldots \alpha_{1,1}+\lambda_j \alpha_{2,p_2}\ldots \alpha_{2,1}, \;\; \text{where } j\in\{3,\ldots, m\}.$$
Following \cite{Ri}, $C(\hbox{\boldmath$p$}, \hbox{\boldmath$\lambda$})$ is said to be a {\it canonical algebra} of type $(\hbox{\boldmath$p$}, \hbox{\boldmath$\lambda$})$, $\hbox{\boldmath$p$}$ the \textit{weight sequence} of $C(\hbox{\boldmath$p$}, \hbox{\boldmath$\lambda$})$, and $\hbox{\boldmath$\lambda$}$ the \textit{(normalized) parameter sequence} of $C(\hbox{\boldmath$p$}, \hbox{\boldmath$\lambda$})$. It follows from \cite[(3.7)]{Ri} that, for a canonical algebra $C=C(\hbox{\boldmath$p$}, \hbox{\boldmath$\lambda$})$, the Auslander-Reiten quiver $\Gamma_C$ of $C$ is of the form
$$\Gamma_C=\mathcal{P}^C\cup\mathcal{T}^C\cup\mathcal{Q}^C$$
where $\mathcal{P}^C$ is a family of components containing all indecomposable projective $C$-modules (hence the unique simple projective $C$-module $S(0)$ associated with the vertex $0$ of $\Delta(\hbox{\boldmath$p$})$), $\mathcal{Q}^C$ is a family of components containing all indecomposable injective $C$-modules (hence the unique simple injective $C$-module $S(\omega)$ associated with the vertex $\omega$ of $\Delta(\hbox{\boldmath$p$})$), and $\mathcal{T}^C=(\mathcal{T}^C_{\lambda})_{\lambda\in\mathbb{P}_1(K)}$ is a \textit{canonical} $\mathbb{P}_1(K)$-family of pairwise orthogonal standard stable tubes separating  $\mathcal{P}^C$ from $\mathcal{Q}^C$ and containing all simple $C$-modules except $S(0)$ and $S(\omega)$. Moreover, if $r^C_{\lambda}$ denotes the rank of the stable tube $\mathcal{T}^C_{\lambda}$, then $r^C_{\lambda_i}=p_i$, for  any $i\in\{1,\ldots, m\}$, and $r^C_{\lambda}=1$, for $\lambda\in\mathbb{P}_1(K)\setminus \{\lambda_1,\ldots,\lambda_m\}$.\\

Let $C=C(\hbox{\boldmath$p$}, \hbox{\boldmath$\lambda$})$ be a
canonical algebra. We recall the description of modules lying on
the mouths of stable tubes of the canonical
$\mathbb{P}_1(K)$-family
$\mathcal{T}^C=(\mathcal{T}^C_{\lambda})_{\lambda\in\mathbb{P}_1(K)}$
of $\Gamma_C$:
\begin{enumerate}
\renewcommand{\labelenumi}{(\alph{enumi})}
\item For $\lambda=\lambda_1=\infty$, the mouth of $\mathcal{T}^C_{\lambda}=\mathcal{T}^C_{\infty}$ consists of the simple $C$-modules $S(1,1), \ldots, S(1,p_1-1)$ at the vertices $(1,1),\ldots, (1,p_1-1)$ of $\Delta(\hbox{\boldmath$p$})$, if $p_1\geq 2$, and the nonsimple $C$-module $E^{(\infty)}$ of the form

\unitlength1cm
\begin{center}
\begin{picture}(9,2.3)
\multiput(1.5,-0.3)(1.6,0){2}{$K$}
\multiput(-0.1,1)(1.6,0){3}{$K$}
\multiput(1.5,2)(1.6,0){2}{$0$}
\multiput(5.2,-0.3)(1.6,0){2}{$K$}
\multiput(5.2,1)(1.6,0){3}{$K$}
\multiput(5.2,2)(1.6,0){2}{$0$}
\multiput(1.5,-1.3)(1.6,0){2}{$K$}
\multiput(5.2,-1.3)(1.6,0){2}{$K$}

\multiput(3,-0.2)(3.6,0){2}{\vector(-1,0){1}}
\multiput(3,1.1)(3.6,0){2}{\vector(-1,0){1}}
\multiput(3,2.1)(3.6,0){2}{\vector(-1,0){1}}
\put(1.4,1.1){\vector(-1,0){1}}
\put(8.2,1.1){\vector(-1,0){1}}
\multiput(5.1,-0.2)(-1.2,0){2}{\vector(-1,0){0.35}}
\multiput(5.1,1.1)(-1.2,0){2}{\vector(-1,0){0.35}}
\multiput(5.1,2.1)(-1.2,0){2}{\vector(-1,0){0.35}}

\multiput(3,-1.2)(3.6,0){2}{\vector(-1,0){1}}
\multiput(5.1,-1.2)(-1.2,0){2}{\vector(-1,0){0.35}}
\put(1.4,2){\vector(-4,-3){1}}
\put(1.4,-0.1){\vector(-1,1){1}}
\put(8.2,1.3){\vector(-4,3){1}}
\put(8.2,0.9){\vector(-1,-1){1}}
\put(8.2,0.7){\vector(-2,-3){1}}
\put(1.4,-0.9){\vector(-2,3){1}}

\multiput(1.7,-0.8)(1.6,0){2}{$\vdots$}
\multiput(5.3,-0.8)(1.6,0){2}{$\vdots$}
\multiput(1.7,0.3)(1.6,0){2}{$\vdots$}
\multiput(5.3,0.3)(1.6,0){2}{$\vdots$}
\put(4,-0.2){$\ldots$}
\put(4,1.1){$\ldots$}
\put(4,2.1){$\ldots$}
\put(4,-1.2){$\ldots$}

\put(6.1,-1.1){\scriptsize{$1$}}
\put(6.1,-0.1){{\scriptsize{$1$}}}
\put(6.1,1.2){\scriptsize{$1$}}
\put(2.5,-1.1){\scriptsize{$1$}}
\put(2.5,-0.1){{\scriptsize{$1$}}}
\put(2.5,1.2){\scriptsize{$1$}}

\put(0.9,1.2){\scriptsize{$1$}}
\put(7.6,1.2){\scriptsize{$1$}}
\put(0.9,0.4){\scriptsize{$-\lambda_j$}}
\put(7.5,0.4){\scriptsize{$1$}}
\put(0.4,-0.5){\scriptsize{$-\lambda_m$}}
\put(7.8,-0.2){\scriptsize{$1$}}

\put(8.7,-0.4){\vector(-1,0){1.2}}
\put(8.7,0.2){\vector(-1,0){1.2}}
\put(9,-0.5){\scriptsize{$m$-th path}}
\put(8.9,0.1){\scriptsize{$j$-th path}}
\end{picture}
\end{center}
\vspace{1.1cm}
with $j\in\{3,\ldots,m\}$;\\

\item For $\lambda=\lambda_2=0$, the mouth of $\mathcal{T}^C_{\lambda}=\mathcal{T}^C_0$ consists of the simple $C$-modules $S(2,1), \ldots, S(2,p_2-1)$ at the vertices $(2,1),\ldots, (2,p_2-1)$ of $\Delta(\hbox{\boldmath$p$})$, if $p_2\geq 2$, and the nonsimple $C$-module $E^{(0)}$ of the form
\begin{center}

\begin{picture}(9,2.3)
\multiput(1.5,-0.3)(1.6,0){2}{$K$}
\multiput(1.6,1)(1.6,0){2}{$0$}
\multiput(0,1)(1.6,0){1}{$K$}
\multiput(1.5,2)(1.6,0){2}{$K$}
\multiput(5.2,-0.3)(1.6,0){2}{$K$}
\multiput(5.2,1)(1.6,0){2}{$0$}
\multiput(8.4,1)(1.6,0){1}{$K$}
\multiput(5.2,2)(1.6,0){2}{$K$}
\multiput(1.5,-1.3)(1.6,0){2}{$K$}
\multiput(5.2,-1.3)(1.6,0){2}{$K$}

\multiput(3,-0.2)(3.6,0){2}{\vector(-1,0){1}}
\multiput(3,1.1)(3.6,0){2}{\vector(-1,0){1}}
\multiput(3,2.1)(3.6,0){2}{\vector(-1,0){1}}
\put(1.4,1.1){\vector(-1,0){1}}
\put(8.2,1.1){\vector(-1,0){1}}
\multiput(5.1,-0.2)(-1.2,0){2}{\vector(-1,0){0.35}}
\multiput(5.1,1.1)(-1.2,0){2}{\vector(-1,0){0.35}}
\multiput(5.1,2.1)(-1.2,0){2}{\vector(-1,0){0.35}}

\multiput(3,-1.2)(3.6,0){2}{\vector(-1,0){1}}
\multiput(5.1,-1.2)(-1.2,0){2}{\vector(-1,0){0.35}}
\put(1.4,2){\vector(-4,-3){1}}
\put(1.4,-0.1){\vector(-1,1){1}}
\put(8.2,1.3){\vector(-4,3){1}}
\put(8.2,0.9){\vector(-1,-1){1}}
\put(8.2,0.7){\vector(-2,-3){1}}
\put(1.4,-0.9){\vector(-2,3){1}}

\multiput(1.7,-0.8)(1.6,0){2}{$\vdots$}
\multiput(5.3,-0.8)(1.6,0){2}{$\vdots$}
\multiput(1.7,0.3)(1.6,0){2}{$\vdots$}
\multiput(5.3,0.3)(1.6,0){2}{$\vdots$}
\put(4,-0.2){$\ldots$}
\put(4,1.1){$\ldots$}
\put(4,2.1){$\ldots$}
\put(4,-1.2){$\ldots$}

\put(6.1,2.2){\scriptsize{$1$}}
\put(6.1,-1.1){\scriptsize{$1$}}
\put(6.1,-0.1){{\scriptsize{$1$}}}
\put(2.5,2.2){{\scriptsize{$1$}}}
\put(2.5,-1.1){\scriptsize{$1$}}
\put(2.5,-0.1){{\scriptsize{$1$}}}

\put(0.9,1.8){\scriptsize{$1$}}
\put(7.6,1.8){\scriptsize{$1$}}
\put(1,0.4){\scriptsize{$-1$}}
\put(7.5,0.4){\scriptsize{$1$}}
\put(0.6,-0.5){\scriptsize{$-1$}}
\put(7.8,-0.2){\scriptsize{$1$}}

\put(8.7,-0.4){\vector(-1,0){1.2}}
\put(8.7,0.2){\vector(-1,0){1.2}}
\put(9,-0.5){\scriptsize{$m$-th path}}
\put(8.9,0.1){\scriptsize{$j$-th path}}

\end{picture}
\end{center}
\vspace{0.9cm}
with $j\in\{3,\ldots,m\}$;\\

\item For $\lambda=\lambda_j$ with $j\in \{3,\ldots,m\}$, the mouth of $\mathcal{T}^C_{\lambda}$ consists of the simple $C$-modules $S(j,1), \ldots, S(j,p_j-1)$ at the vertices $(j,1),\ldots, (j,p_j-1)$ of $\Delta(\hbox{\boldmath$p$})$, if $p_j\geq 2$, and the nonsimple $C$-module $E^{(\lambda_j)}$ of the form

\begin{center}

\begin{picture}(9,2.3)
\multiput(1.5,-0.3)(1.6,0){2}{$0$}
\multiput(-0.1,1)(1.6,0){3}{$K$}
\multiput(1.5,2)(1.6,0){2}{$K$}
\multiput(5.2,-0.3)(1.6,0){2}{$0$}
\multiput(5.2,1)(1.6,0){3}{$K$}
\multiput(5.2,2)(1.6,0){2}{$K$}
\multiput(1.5,-1.3)(1.6,0){2}{$K$}
\multiput(5.2,-1.3)(1.6,0){2}{$K$}

\multiput(3,-0.2)(3.6,0){2}{\vector(-1,0){1}}
\multiput(3,1.1)(3.6,0){2}{\vector(-1,0){1}}
\multiput(3,2.1)(3.6,0){2}{\vector(-1,0){1}}
\put(1.4,1.1){\vector(-1,0){1}}
\put(8.2,1.1){\vector(-1,0){1}}
\multiput(5.1,-0.2)(-1.2,0){2}{\vector(-1,0){0.35}}
\multiput(5.1,1.1)(-1.2,0){2}{\vector(-1,0){0.35}}
\multiput(5.1,2.1)(-1.2,0){2}{\vector(-1,0){0.35}}

\multiput(3,-1.2)(3.6,0){2}{\vector(-1,0){1}}
\multiput(5.1,-1.2)(-1.2,0){2}{\vector(-1,0){0.35}}
\put(1.4,2){\vector(-4,-3){1}}
\put(1.4,-0.1){\vector(-1,1){1}}
\put(8.2,1.3){\vector(-4,3){1}}
\put(8.2,0.9){\vector(-1,-1){1}}
\put(8.2,0.7){\vector(-2,-3){1}}
\put(1.4,-0.9){\vector(-2,3){1}}

\multiput(1.7,-0.8)(1.6,0){2}{$\vdots$}
\multiput(5.3,-0.8)(1.6,0){2}{$\vdots$}
\multiput(1.7,0.3)(1.6,0){2}{$\vdots$}
\multiput(5.3,0.3)(1.6,0){2}{$\vdots$}
\put(4,-0.2){$\ldots$}
\put(4,1.1){$\ldots$}
\put(4,2.1){$\ldots$}
\put(4,-1.2){$\ldots$}

\put(6.1,2.2){\scriptsize{$1$}}
\put(6.1,1.2){\scriptsize{$1$}}
\put(6.1,-1.1){{\scriptsize{$1$}}}
\put(2.5,2.2){{\scriptsize{$1$}}}
\put(2.5,-1.1){\scriptsize{$1$}}
\put(2.5,1.2){{\scriptsize{$1$}}}
\put(0.9,1.2){\scriptsize{$1$}}
\put(7.6,1.2){\scriptsize{$1$}}

\put(0.5,1.8){\scriptsize{$-\lambda_j$}}
\put(7.6,1.8){\scriptsize{$1$}}
\put(0,-0.4){\scriptsize{$\lambda_j-\lambda_i$}}
\put(7.8,-0.2){\scriptsize{$1$}}

\put(8.7,-0.4){\vector(-1,0){1.2}}
\put(8.7,0.2){\vector(-1,0){1.2}}
\put(9,-0.5){\scriptsize{$i$-th path}}
\put(8.9,0.1){\scriptsize{$j$-th path}}

\end{picture}
\end{center}
\vspace{0.9cm}
for $i\in\{3,\ldots,m\}\setminus\{j\}$;\\

\item For $\lambda\in\mathbb{P}_1(K)\setminus\{\lambda_1,\ldots,\lambda_m\}$, the mouth of $\mathcal{T}^C_{\lambda}$ consists of one nonsimple $C$-module $E^{(\lambda)}$ of the form

\begin{center}

\begin{picture}(9,2.7)
\multiput(1.5,-0.3)(1.6,0){2}{$K$}
\multiput(-0.1,1)(1.6,0){3}{$K$}
\multiput(1.5,2)(1.6,0){2}{$K$}
\multiput(5.2,-0.3)(1.6,0){2}{$K$}
\multiput(5.2,1)(1.6,0){3}{$K$}
\multiput(5.2,2)(1.6,0){2}{$K$}
\multiput(1.5,-1.3)(1.6,0){2}{$K$}
\multiput(5.2,-1.3)(1.6,0){2}{$K$}

\multiput(3,-0.2)(3.6,0){2}{\vector(-1,0){1}}
\multiput(3,1.1)(3.6,0){2}{\vector(-1,0){1}}
\multiput(3,2.1)(3.6,0){2}{\vector(-1,0){1}}
\put(1.4,1.1){\vector(-1,0){1}}
\put(8.2,1.1){\vector(-1,0){1}}
\multiput(5.1,-0.2)(-1.2,0){2}{\vector(-1,0){0.35}}
\multiput(5.1,1.1)(-1.2,0){2}{\vector(-1,0){0.35}}
\multiput(5.1,2.1)(-1.2,0){2}{\vector(-1,0){0.35}}

\multiput(3,-1.2)(3.6,0){2}{\vector(-1,0){1}}
\multiput(5.1,-1.2)(-1.2,0){2}{\vector(-1,0){0.35}}
\put(1.4,2){\vector(-4,-3){1}}
\put(1.4,-0.1){\vector(-1,1){1}}
\put(8.2,1.3){\vector(-4,3){1}}
\put(8.2,0.9){\vector(-1,-1){1}}
\put(8.2,0.7){\vector(-2,-3){1}}
\put(1.4,-0.9){\vector(-2,3){1}}

\multiput(1.7,-0.8)(1.6,0){2}{$\vdots$}
\multiput(5.3,-0.8)(1.6,0){2}{$\vdots$}
\multiput(1.7,0.3)(1.6,0){2}{$\vdots$}
\multiput(5.3,0.3)(1.6,0){2}{$\vdots$}
\put(4,-0.2){$\ldots$}
\put(4,1.1){$\ldots$}
\put(4,2.1){$\ldots$}
\put(4,-1.2){$\ldots$}

\put(6.1,2.2){\scriptsize{$1$}}
\put(6.1,1.2){\scriptsize{$1$}}
\put(6.1,-1.1){{\scriptsize{$1$}}}
\put(2.5,2.2){{\scriptsize{$1$}}}
\put(2.5,-1.1){\scriptsize{$1$}}
\put(2.5,1.2){{\scriptsize{$1$}}}
\put(0.9,1.2){\scriptsize{$1$}}
\put(7.6,1.2){\scriptsize{$1$}}
\put(6.1,-0.1){\scriptsize{$1$}}
\put(2.5,-0.1){{\scriptsize{$1$}}}

\put(0.5,1.8){\scriptsize{$-\lambda$}}
\put(7.6,1.8){\scriptsize{$1$}}
\put(0.8,0.5){\scriptsize{$\lambda-\lambda_j$}}
\put(7.5,0.4){\scriptsize{$1$}}
\put(0,-0.4){\scriptsize{$\lambda-\lambda_m$}}
\put(7.8,-0.2){\scriptsize{$1$}}

\put(8.7,-0.4){\vector(-1,0){1.2}}
\put(8.7,0.2){\vector(-1,0){1.2}}
\put(9,-0.5){\scriptsize{$m$-th path}}
\put(8.9,0.1){\scriptsize{$j$-th path}}

\end{picture}
\end{center}
\vspace{0.9cm}
with $j\in\{3,\ldots,m\}$.\\
\end{enumerate}

The following results describing generalized standard stable tubes
of an Auslander-Reiten quiver were established in
\cite[Corollary 5.3]{S1}.
\begin{prop}
Let $A$ be an algebra and $\Gamma$ a stable tube of $\Gamma_A$.
Then the following statements are equivalent.
\begin{enumerate}[\rm (i)]
\item  $\Gamma$ is generalized standard.
\item $\Gamma$ is standard.
\item The mouth of $\Gamma$ consists of pairwise orthogonal bricks.
\item $\rad_A^{\infty}(X,X)=0$ for any module $X$ in $\Gamma$.
\end{enumerate}
\end{prop}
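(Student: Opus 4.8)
The plan is to prove the cycle of implications $(\mathrm{ii})\Rightarrow(\mathrm{i})\Rightarrow(\mathrm{iv})\Rightarrow(\mathrm{iii})\Rightarrow(\mathrm{ii})$. Throughout I write $r$ for the rank of the stable tube $\Gamma$, so that $\Gamma=\mathbb{ZA}_{\infty}/(\tau^r)$, I denote by $E_1,\dots,E_r$ the modules on the mouth of $\Gamma$, numbered so that $\tau E_i=E_{i-1}$, and by $E_i[l]$ the indecomposable module in $\Gamma$ of quasi-length $l$ with quasi-socle $E_i$; recall that $\Gamma$ being \emph{standard} means that the full subcategory of $\ind A$ whose objects are the modules lying in $\Gamma$ is equivalent, via the canonical functor, to the mesh category $k(\Gamma)$ of the translation quiver $\Gamma$. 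The implication $(\mathrm{i})\Rightarrow(\mathrm{iv})$ is trivial, being the special case $X=Y$ of the definition of a generalized standard family.

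For $(\mathrm{ii})\Rightarrow(\mathrm{i})$ it suffices, since the $\Hom$-spaces between modules of a stable tube are finite dimensional, to bound the depth of a nonzero composition of irreducible maps inside $\Gamma$. In the mesh category $k(\Gamma)$ every nonzero morphism $E_i[k]\to E_j[l]$ already lies in $\rad^{d}\setminus\rad^{d+1}$ for some $d$ bounded in terms of $k$ and $l$ alone, because the mesh relations collapse any composition that ``climbs above'' the shorter of the two relevant corays; hence $\rad^{\infty}$ vanishes in $k(\Gamma)$, and transporting this along the equivalence furnished by (ii) gives $\rad^{\infty}_A(X,Y)=0$ for all $X,Y$ in $\Gamma$, that is, $\Gamma$ is generalized standard.

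For $(\mathrm{iv})\Rightarrow(\mathrm{iii})$, consider a non-invertible endomorphism $f$ of a mouth module $E_i$. Since $f$ is neither a split monomorphism nor a split epimorphism, it factors as $f=\pi h\iota$ through the left almost split map $\iota\colon E_i\to E_i[2]$, a morphism $h\colon E_i[2]\to E_{i-1}[2]$, and the right almost split map $\pi\colon E_{i-1}[2]\to E_i$ (coming from the almost split sequences $0\to E_i\to E_i[2]\to E_{i+1}\to 0$). Iterating this factorization towards the mouth along the corays, and using the basic fact that in a stable tube there is \emph{no nontrivial sectional path} from one mouth module to another, one shows that $f$ can be pushed into $\rad^{n}_A(E_i,E_i)$ for every $n$, so that $\rad_A(E_i,E_i)=\rad^{\infty}_A(E_i,E_i)$; by (iv) this forces $\End_A(E_i)=K$. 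Running the same argument on a would-be nonzero morphism $f\colon E_i\to E_j$ with $i\neq j$ places it in $\rad^{\infty}_A(E_i,E_j)$, and composing it with a nonzero map $E_j\to E_i$ built out of the almost split structure of the tube (available once one passes to a sufficiently high quasi-length and back) would produce a nonzero element of $\rad^{\infty}_A(E_i,E_i)=0$ --- a contradiction. Hence the mouth of $\Gamma$ consists of pairwise orthogonal bricks.

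Finally, $(\mathrm{iii})\Rightarrow(\mathrm{ii})$ is the reconstruction of a standard stable tube from an orthogonal brick family on its mouth. Starting from pairwise orthogonal bricks $E_1,\dots,E_r$ with $\tau E_i=E_{i-1}$, one builds the modules $E_i[l]$ recursively as middle terms of non-split extensions $0\to E_i\to E_i[l]\to E_{i+1}[l-1]\to 0$ and proves, by induction on the quasi-lengths and dimension shift along these sequences, that $\dim_K\Hom_A(E_i[k],E_j[l])$ coincides with the dimension of the corresponding $\Hom$-space in $k(\Gamma)$; orthogonality of the mouth is precisely what anchors the induction and rules out ``extra'' morphisms. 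As the canonical functor $k(\Gamma)\to\mod A$ is automatically dense onto $\Gamma$ and full, this dimension count makes it faithful as well, so it is an equivalence and $\Gamma$ is standard. I expect the implication $(\mathrm{iv})\Rightarrow(\mathrm{iii})$ to be the main obstacle: converting the vanishing of the infinite radical on every \emph{endomorphism} algebra of $\Gamma$ into the nonexistence of nonzero radical morphisms between \emph{distinct} mouth modules requires a careful use of the combinatorics of sectional paths and the almost split structure of stable tubes (as developed in \cite{S1}), and the simultaneous control of all the spaces $\Hom_A(E_i[k],E_j[l])$ in $(\mathrm{iii})\Rightarrow(\mathrm{ii})$ is the other delicate point.
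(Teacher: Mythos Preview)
The paper does not actually prove this proposition: it is quoted verbatim from \cite[Corollary~5.3]{S1} and no argument is given. So there is nothing to compare your approach to within the paper itself; one can only check your sketch against the mathematics.

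Your cycle $(\mathrm{ii})\Rightarrow(\mathrm{i})\Rightarrow(\mathrm{iv})$ and the brick part of $(\mathrm{iv})\Rightarrow(\mathrm{iii})$ are fine, and your outline of $(\mathrm{iii})\Rightarrow(\mathrm{ii})$ is the standard reconstruction of a tube from an orthogonal mouth (this is indeed how it is done, e.g.\ in Ringel's work). The genuine gap is in the orthogonality half of $(\mathrm{iv})\Rightarrow(\mathrm{iii})$. You propose to compose a hypothetical nonzero $f\colon E_i\to E_j$ with ``a nonzero map $E_j\to E_i$ built out of the almost split structure of the tube''. No such map exists: any composition of irreducibles starting at a mouth module $E_j$ and returning to the mouth must pass through the almost split sequence $0\to E_j\to E_j[2]\to E_{j+1}\to 0$, and the composite $E_j\to E_j[2]\to E_{j+1}$ is already zero. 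Hence every composition of irreducibles from one mouth module to another vanishes, and your contradiction never materialises.

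A repair that stays close to your idea: from the almost split sequences $0\to E_j\to E_j[2]\to E_{j+1}\to 0$ and $0\to E_{i-1}\to E_{i-1}[2]\to E_i\to 0$ you do know that $E_j\hookrightarrow E_j[2]$ is a monomorphism and $E_{i-1}[2]\twoheadrightarrow E_i$ is an epimorphism. Pre- and post-composing $f$ with these gives a \emph{nonzero} element of $\rad^\infty_A(E_{i-1}[2],E_j[2])$. Now iterate: at each step use the full mono (resp.\ epi) from the almost split sequence, which lands in a direct sum, so at least one component stays nonzero; tracking indices modulo $r$ and using $\tau$-periodicity eventually forces source and target to coincide, producing a nonzero element of $\rad^\infty_A(X,X)$ for some $X$ in $\Gamma$ and contradicting (iv). This is essentially the mechanism behind the argument in \cite{S1}; your sketch had the right strategy but the wrong auxiliary map.
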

Recall that an indecomposable $A$-module $X$ is called {\it brick} if its endomorphism algebra $\End_A(X)$ is isomorphic to $K$.\\

We now give the characterization of canonical algebras by means of
a family of stable tubes which are maximally saturated by simple
modules.

\begin{thm} \label{thm1}
Let $B$ be an algebra. Then the following statements are equivalent.
\begin{enumerate}[\rm (i)]
\item  $\Gamma_B$ contains a faithful generalized standard family
  $\mathcal{C}=\{\mathcal{C}_{\lambda}\}_{{\lambda} \in \Lambda}$ of stable tubes maximally saturated by simple modules.
\item $B$ is a canonical algebra.
\end{enumerate}
\end{thm}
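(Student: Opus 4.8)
The plan is to prove the two implications separately: (ii) $\Rightarrow$ (i) is a direct verification from Ringel's structure theory of canonical algebras, while (i) $\Rightarrow$ (ii), the substantial direction, goes through the theory of concealed-canonical algebras. For (ii) $\Rightarrow$ (i), let $B = C(\mathbf{p},\boldsymbol{\lambda})$ and put $S = S(0)$, $T = S(\omega)$, the simple projective and the simple injective $C$-module; these do not lie in $\mathcal{T}^C$, since by \cite[(3.7)]{Ri} the family $\mathcal{T}^C$ contains every simple $C$-module except $S(0)$ and $S(\omega)$. The family $\mathcal{T}^C$ is faithful, being separating \cite[(3.7)]{Ri}, and generalized standard, being standard \cite[(3.7)]{Ri} (cf.\ \cite[Corollary~5.3]{S1}). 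For (MS1): by \cite[(3.7)]{Ri} the tube $\mathcal{T}^C_{\lambda_i}$ has rank $p_i$, and by the description of the mouths of the tubes of $\mathcal{T}^C$ recalled above its mouth consists of the $p_i-1$ simple modules $S(i,1),\dots,S(i,p_i-1)$ together with the single non-simple brick $E^{(\lambda_i)}$, so $s(\mathcal{T}^C_{\lambda_i}) = p_i-1 = r^C_{\lambda_i}-1$; each tube $\mathcal{T}^C_\lambda$ with $\lambda\notin\{\lambda_1,\dots,\lambda_m\}$ has rank $1$ with mouth the single non-simple brick $E^{(\lambda)}$, so $s(\mathcal{T}^C_\lambda) = 0 = r^C_\lambda-1$. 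For (MS2): the $S(i,j)$ are composition factors lying in $\mathcal{T}^C$, each mouth brick $E^{(\lambda)}$ (resp.\ $E^{(\lambda_i)}$) has composition factors $S(0)=S$, $S(\omega)=T$ and some of the $S(i,j)$ by the explicit representations displayed above, and every indecomposable module in a stable tube is built from the mouth modules by iterated extensions along the rays, so its composition factors lie among $S$, $T$ and the simple modules in $\mathcal{T}^C$. For (MS3): in each displayed representation the vertex $0$ is the unique source and $\omega$ the unique sink of $\Delta(\mathbf{p})$ carrying a nonzero vector space, so $\soc E^{(\lambda)} = S$ and $\top E^{(\lambda)} = T$ (and likewise for $E^{(\lambda_i)}$), while $\mathcal{T}^C$ exhausts the stable tubes of $\Gamma_C$ because the remaining components $\mathcal{P}^C$ and $\mathcal{Q}^C$ contain the non-periodic projective, resp.\ injective, modules and so are not tubes.

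For (i) $\Rightarrow$ (ii), suppose $\Gamma_B$ contains a faithful generalized standard family $\mathcal{C} = \{\mathcal{C}_\lambda\}_{\lambda\in\Lambda}$ of stable tubes maximally saturated by simple modules $S,T$, and let $p_1,\dots,p_m$ denote the ranks exceeding $1$ among the $r(\mathcal{C}_\lambda)$. First I would show that $\mathcal{C}$ is a \emph{separating} family: the vanishing of $\Hom_B$ from $\mathcal{C}$ to the left-hand components of $\Gamma_B$, from the right-hand components to $\mathcal{C}$, and from the right-hand to the left-hand components follows from generalized standardness by the techniques of \cite[Section~3]{S1}, while the equality $\Gamma_B = \mathcal{P}\cup\mathcal{C}\cup\mathcal{Q}$ together with the factorization property follows from the faithfulness (hence sincerity) of $\mathcal{C}$ and condition (MS2). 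By the characterization of concealed-canonical algebras by means of separating families of stable tubes (see \cite{SS2}, and \cite[(3.7)]{Ri} for the canonical case), $B$ is then concealed-canonical, so there is an isomorphism $B \cong \End_C(\widetilde{T})$ with $C$ a canonical algebra of tubular type $\mathbf{p} = (p_1,\dots,p_m)$ and $\widetilde{T}$ a tilting $C$-module whose indecomposable summands lie in the preprojective-like component family of $\Gamma_C$, chosen so that the tilting functor $\Hom_C(\widetilde{T},-)$ restricts to a rank-preserving equivalence between $\mathcal{T}^C$ and $\mathcal{C}$.

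It then remains to prove $\widetilde{T} \cong C$, i.e.\ $B \cong C$. Since $\mathcal{C}$ is faithful, every simple $B$-module occurs as a composition factor of some module in $\mathcal{C}$ (a faithful family is sincere), so by (MS2) the simple $B$-modules are exactly $S$, $T$ and the simple modules lying in $\mathcal{C}$; by (MS1) their total number is $2 + \sum_{i=1}^m(p_i-1) = \dim K_0(C)$. Under the equivalence above a simple $B$-module in $\mathcal{C}$ corresponds to an indecomposable module $M$ in $\mathcal{T}^C$ with $\dim_K\Hom_C(\widetilde{T},M) = 1$; invoking the mouth description of $\mathcal{T}^C$ together with the socle/top constraint (MS3) on the $\mathbb{P}_1(K)$-family of pairwise orthogonal bricks corresponding to the $E_\lambda$, one checks that these modules $M$ must be precisely the simple $C$-modules $S(i,j)$, and then, using the rigidity of a $\mathbb{P}_1(K)$-family of pairwise orthogonal bricks with common socle $S$ and common top $T$, that $\widetilde{T}$ must be $C$ itself. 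Hence $B$ is a canonical algebra, as required.

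I expect the core difficulty to be this final step: showing that maximal saturation by simple modules forces the tilting module $\widetilde{T}$ to be trivial — equivalently, singling out the genuinely canonical algebras among all concealed-canonical ones — requires a careful analysis of how the simple $B$-modules and the mouth bricks $E_\lambda$ pull back along the tilting functor into $\mod C$, together with the rigidity imposed by (MS2)--(MS3) on a one-parameter family of pairwise orthogonal bricks with prescribed socle and top. A secondary, more technical point is the verification that a faithful generalized standard family of stable tubes is automatically separating.
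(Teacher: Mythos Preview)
Your verification of (ii) $\Rightarrow$ (i) is fine and matches the paper's treatment of that direction as essentially obvious. For (i) $\Rightarrow$ (ii), however, your route via concealed-canonical algebras diverges sharply from the paper and leaves two genuine gaps.

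The paper does not pass through the Lenzing--de la Pe\~na classification at all. Instead it computes the Gabriel quiver of $B$ directly: using generalized standardness and the Auslander--Reiten formula, one determines $\dim_K\Ext^1_B(S',S'')$ for all pairs of simples and reads off that $Q_B$ is the canonical-shaped quiver $\Delta(\hbox{\boldmath$p$})$ with possibly some extra arrows $\alpha_{m+1},\dots,\alpha_r$ from $\omega$ to $0$. The key step is then to restrict along $e=e_0+e_\omega$: the functor $\res_e:\mod B\to\mod eBe$ sends each non-homogeneous tube $\mathcal{C}_i$ to a homogeneous tube in $\Gamma_{eBe}$ (this is proved by showing that a specific short exact sequence restricts to an almost split sequence), so $eBe$ is tame hereditary and hence the Kronecker algebra. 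This forces $\dim_K e_\omega Be_0=2$, and a direct argument on the mouth modules $F_i$ shows that the paths $\varrho_i$ satisfy Ringel's generic relations, giving $B\cong C(\hbox{\boldmath$p$},\hbox{\boldmath$\lambda$})$.

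Your approach has two holes. First, the claim that $\mathcal{C}$ is separating is asserted rather than proved: generalized standardness controls $\rad^\infty$ only \emph{within} $\mathcal{C}$, and the ``techniques of \cite[Section~3]{S1}'' do not yield a trisection $\Gamma_B=\mathcal{P}\cup\mathcal{C}\cup\mathcal{Q}$ with the factorization property---indeed, you refer to ``left-hand'' and ``right-hand'' components before any such decomposition has been established. Without separating, the concealed-canonical characterization does not apply. Second, and more seriously, the ``tilting back'' step is only a sketch: the assertion that the simple $B$-modules in $\mathcal{C}$ must correspond under $\Hom_C(\widetilde{T},-)$ to the simple $C$-modules $S(i,j)$ is precisely what distinguishes the canonical algebra from a genuinely concealed one, and your appeal to (MS3) and ``rigidity of a $\mathbb{P}_1(K)$-family of pairwise orthogonal bricks'' does not constitute an argument. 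A concealed-canonical algebra that is not canonical can perfectly well have $r(\mathcal{C}_\lambda)-1$ simples on the mouth of each tube; what fails is that its quiver has a different shape (more than one source or sink), so the socle/top condition (MS3) would be violated---but making this precise through the tilting functor requires real work that you have not supplied. The paper's direct approach avoids both difficulties entirely.
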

\begin{proof}
We proof only that (i) implies (ii), as the other implication is
obvious in view of the above description.

Let $n=rkK_0(B)$ and $S=S(0)$, $T=S(\omega)$ be simple $B$-modules
at the vertices $0$ and $\omega$ of the Gabriel quiver $Q_B$ of
$B$, respectively. Since $\mathcal{C}$ is a generalized standard
family we have that the modules $S$ and $T$ are not isomorphic and
hence $0\neq \omega$. For a module $M \in \mathcal{C}$, by
$l_{\mathcal{C}}(M)$ we shall denote the length of $M$ in the
additive category $\add$ $\mathcal{C}$ of $\mathcal{C}$ in $\mod
B$, that is, the length $l$ of a chain $M=M_0\supset M_1 \supset
...\supset M_l=0$ of submodules of $M$ which belong to
$\mathcal{C}$ and such that $M_{j-1}/M_j$ is a module from the
mouth of $\mathcal{C}_{\lambda}$, for $1 \leq j \leq l$ and some
${\lambda} \in {\Lambda}$. If $X$ is a module from the mouth of
$\mathcal{C}_{\lambda}$, for some ${\lambda} \in {\Lambda}$, by
$X[j]$ we shall denote  a module  $M$ which belongs to infinite
ray starting at $X$ such that $l_{\mathcal{C}}(M)=j$.

We have two cases to consider.

(1) Assume that the family
$\mathcal{C}=\{\mathcal{C}_{\lambda}\}_{{\lambda} \in {\Lambda}}$
consists entirely of homogenous tubes. Since $\mathcal{C}$ is
faithful family maximally saturated by simple modules, we have
that $S$ and $T$ are the only simple $B$-modules. Moreover, by
\cite[Lemma 5.9]{S1} $\pd_A X \leq 1$ and $\id_A X \leq 1$ for any
indecomposable module $X \in \mathcal{C}$. Therefore, the ordinary
quiver $Q_B$ of $B$ does not admit any oriented cycle. Then, by
$(\mathrm{MS3})$, we conclude that $Q_B$ is of the form:
\vspace{0,7cm} \unitlength1cm
\begin{center}
\begin{picture}(3.2,0.9)
\put(-0.1,0.5){0} \put(3.2,0.5){$\omega$}
\put(2.7,1){\vector(-1,0){2.1}} \put(1.65,0.5){$\vdots$}
\put(2.7,0.3){\vector(-1,0){2.1}} \put(2.9,0.5){$\circ$}
\put(0.2,0.5){$\circ$} \put(1.6,1.13){\scriptsize{$\alpha_1$}}
\put(1.6,0.1){\scriptsize{$\alpha_m$}}
\end{picture}
\end{center}
\vspace{0,5cm} for some $m \geq 1$. Assume that
$\alpha_1,...,\alpha_m$ are linearly independent elements of the
$K$-vector space $e_{\omega} B e_0$ and hence $B$ is the path
algebra $KQ_B$. Note that $m \geq 2$ since $B$ is
representation-infinite. If $m \geq 3$, then $B$ is wild and
$\Gamma_B$ does not contain tubes (see \cite[XVIII.1.6]{SS2}).
Hence $B$ is the path algebra $KQ_B$ of the Kronecker quiver:
\begin{center}
\begin{picture}(3.2,0.9)
\put(-0.1,0.5){0}
\put(3.2,0.5){$\omega$}
\put(2.7,0.7){\vector(-1,0){2.1}}
\put(2.7,0.55){\vector(-1,0){2.1}}
\put(2.9,0.5){$\circ$}
\put(0.2,0.5){$\circ$}
\put(1.6,0.83){\scriptsize{$\alpha_1$}}
\put(1.6,0.28){\scriptsize{$\alpha_2$}}
\end{picture}
\end{center}
and $\mathcal{C}=\{\mathcal{C}_{\lambda}\}_{{\lambda} \in \Lambda}$ is a unique
$\mathbb{P}_1(K)$-family of homogenous tubes of $\Gamma_B$
(see \cite[(XI.4.6)]{SS1}).\\

(2) Assume that $\{ \mathcal{C}_1\, ..., \mathcal{C}_m\}$, for
some $m \geqslant 1$, is a complete set of stable tubes of rank at
least two in the family
$\mathcal{C}=\{\mathcal{C}_{\lambda}\}_{{\lambda} \in {\Lambda}}$.
Note that all simple modules from $\mathcal{C}$ lie on mouths of
these tubes. Since $\mathcal{C}$ is faithful, simple modules from
$\mathcal{C}$ together with $S$ and $T$ form the set of all simple
$B$-modules. We mention also that the number $m$ of nonhomogenous
tubes in $\Gamma_B$ is no greater than $n$, by \cite[(2.2)]{S1}.

Let $\mathcal{C}_i$, for some $i \in \{1, ..., m\}$, be a tube of
rank $r(\mathcal{C}_i)=p_{i} \geq 2$. Denote by $S_{(i,1)}, ...,
S_{(i,p_{i}-1)}$ all simple modules which belong to
$\mathcal{C}_{i}$ and by $F_i$ the remaining module from the mouth
of $\mathcal{C}_{i}$. Note that $F_i$ satisfies the condition
(MS3), otherwise an epimorphism $f: E_{\lambda}\rightarrow T$
shall not factorize by a mouth module (see \cite[Lemma
X.2.9]{SS1}).



We start with describing the Gabriel quiver $Q_B$ of $B$. Since
 $\mathcal{C}$ is generalized standard family of stable tubes, by Proposition
2.1,  we have that
$$\dim_K\Ext^1_B(S',S'')=\dim_K\Hom_B(S'',\tau_B S')=
\begin{cases}
1, &\text{if} \quad \tau_BS' = S''\cr 0, &\text{if} \quad
\tau_BS'\neq S''
\end{cases},$$
for any simple modules $S'$, $S''$ in $\mathcal{C}$. Consider now
the tube $\mathcal{C}_i$. Without loss of generality we assume
that $\tau_BS_{(i,l)} = S_{(i,j)}$ if $l=j+1$, for $j \in \{1,
..., p_i-2\}$ and $\tau_B S_{(i,1)} = F_i$, $\tau_BF_i =
S_{(i,p_i-1)}$. Observe that:
\begin{itemize}
\item $\dim _K\Ext^1_B(T, S_{(i,p_{i}-1)})=\dim_K
\underline{\Hom}_B(\tau_B^{-}S_{(i,p_{i}-1)},
T)=\dim_K\underline{\Hom}_B(F_i, T)=$\\ $=\dim_K\Hom_B(F_i, T)=1$,
since $T=\top F_i$,
\item $\dim _K\Ext^1_B(T, S_{(i,j)})=\dim_K \underline{\Hom}_B(\tau_B^{-}S_{(i,j)},
T)=\dim_K\underline{\Hom}_B(S_{(i,j+1)}, T)=0$, since
$S_{(i,j+1)}, T$ are nonisomorphic simple $B$-modules, for $1 \leq
j<p_{i}-1$,
\item $\dim _K\Ext^1_B(S_{(i,1)},
S)=\dim_K \overline{\Hom}_B(S,\tau_B S_{(i,1)})$ $=\dim_K
\overline{\Hom}_B(S,F_i)=1$, because $S=\soc F_i$,
\item  $\dim _K\Ext^1_B(S_{(i,j)}, S)=\dim_K
\overline{\Hom}_B(S,\tau_B S_{(i,j)})=\dim_K
\overline{\Hom}_B(S,S_{(i,j-1)})=0$, since $S, S_{(i,j-1)}$ are
nonisomorphic simple $B$-modules, for any $1< j\leq p_{i}-1$,
\item $\dim _K\Ext^1_B(S_{(i,j)}, T)=0$, for any $1\leq j\leq
p_{i}-1$, because ${\Hom}_B(T,\tau_B S_{(i,j)})={\Hom}_B(T,S_{(i,
j-1)})=0$, for $j>1$, and ${\Hom}_B(T,\tau_B
S_{(i,1)})={\Hom}_B(T,F_i)={\Hom}_B(\top F_i,F_i)=0$,
\item $\dim
_K\Ext^1_B(S, S_{(i,j)})=0$, for any $1\leq j\leq p_{i}-1$,
because ${\Hom}_B(\tau_B^{-}S_{(i,j)}, S)={\Hom}_B(S_{(i,j+1)},
S)=0$, for $j< p_i-1$, and ${\Hom}_B(\tau_B^{-}S_{(i,p_i-1)},
S)={\Hom}_B(F_i, S)= \Hom_B(F_i, \soc F_i)=0$.
\end{itemize}

Denote by $(i,j)$, for $1 \leq i \leq m$ and $1 \leq j \leq p_i
-1$, the vertex of $Q_B$ for which $S_{(i,j)}$ is the simple
$B$-module at this vertex, that is $S_{(i,j)}=S(i,j)$. Then by
\cite[(III.2.12)]{ASS}, we obtain that $Q_B$  has a subquiver $Q$
of the form \vspace{1cm}

\unitlength1cm
\begin{center}
\begin{picture}(6.2,2.2)
\multiput(-1,1)(1.6,0){3}{$\circ$}
\multiput(0.6,2.3)(1.6,0){2}{$\circ$}
\multiput(4.2,1)(1.6,0){3}{$\circ$}
\multiput(4.2,2.3)(1.6,0){2}{$\circ$}

\multiput(2,1.1)(3.6,0){2}{\vector(-1,0){1}}
\multiput(2,2.4)(3.6,0){2}{\vector(-1,0){1}}

\put(0.4,1.1){\vector(-1,0){1}}
\put(7.2,1.1){\vector(-1,0){1}}
\multiput(4.1,1.1)(-1.2,0){2}{\vector(-1,0){0.35}}
\multiput(4.1,2.4)(-1.2,0){2}{\vector(-1,0){0.35}}

\put(0.4,2.3){\vector(-1,-1){1}}
\put(7.2,1.3){\vector(-1,1){1}}

\multiput(0.7,1.7)(1.6,0){2}{$\vdots$}
\multiput(4.3,1.7)(1.6,0){2}{$\vdots$}
\put(3,1.1){$\ldots$}
\put(3,2.3){$\ldots$}
\put(1.3,1.23){\tiny{$\alpha_{m,2}$}}
\put(1.3,2.53){\tiny{$\alpha_{1,2}$}}
\put(4.55,1.23){\tiny{$\alpha_{m,p_m-1}$}}
\put(4.5,2.53){\tiny{$\alpha_{1,p_1-1}$}}
\put(-0.3,1.23){\tiny{$\alpha_{m,1}$}}
\put(6.45,1.23){\tiny{$\alpha_{m,p_m}$}}
\put(-0.4,2.1){\tiny{$\alpha_{1,1}$}}
\put(6.6,2){\tiny{$\alpha_{1,p_1}$}}

\put(-1.3,1){$0$}
\put(7.7,1){$\omega$}
\put(0.4,2.57){\tiny{$(1,1)$}}
\put(2,2.57){\tiny{$(1,2)$}}
\put(5.6,2.57){\tiny{$(1,p_1-1)$}}
\put(0.4,0.8){\tiny{$(m,1)$}}
\put(2,0.8){\tiny{$(m,2)$}}
\put(5.25,0.8){\tiny{$(m,p_m-1)$}}

\put(-0.5,0.8){\vector(-3,1){0.1}}
\bezier{1000}(-0.5,0.8)(3.5,-0.3)(7.2,0.9)
\put(-0.5,0.5){\vector(-3,2){0.1}}
\bezier{700}(-0.5,0.5)(4,-2.3)(7.2,0.7)
\put(3, 0.5){\tiny{$\alpha_{m+1}$}}
\put(3.2,-0.4){$\vdots$}
\put(3.2,-0.7){\tiny{$\alpha_{r}$}}

\end{picture}
\end{center}
\vspace{0,5cm}

\vspace{0,6cm} \noindent where $r\geq m$,  there are no other
vertices in $Q_B$, no other arrows starting at or ending in
vertices $(i,j)$ and no other arrows starting at $\omega$ and
ending in $0$. Since $\mathcal{C}$ form a hereditary family of
modules in $\mod B$ (see \cite[Lemma 5.9]{S1}), then any path
$\alpha_{i,p_i}\ldots\alpha_{i,1}$, for $1 \leq i \leq m$, is a
nonzero element of $B$, and $Q$ does not admit any oriented cycle.
Therefore, we conclude that $Q_B=Q$.

Consider now the algebra $B'=eBe$, where $e=e_0+e_{\omega}$. There
is the canonical restriction functor
\[\res_e: \mod B \rightarrow \mod{B'}\]
which assigns to a module $M$ in $\mod B$ the module
$\res_e(M)=Me$ in $\mod B'$ and to a homomorphism $f: M
\rightarrow N$ in $\mod B$ its restriction $\res_e(f): \res_e(M)
\rightarrow \res_e(N)$ to $Me$. Note that $Q_{B'}$ is an enlarged
Kronecker quiver with $r$ arrows
\vspace{0,5cm}
\begin{center}
\begin{picture}(3.2,1)
\put(-0.1,0.5){0}
\put(3.2,0.5){$\omega$ .}
\put(2.7,0.8){\vector(-1,0){2.1}}
\put(2.7,0.35){\vector(-1,0){2.1}}
\put(2.9,0.5){$\circ$}
\put(0.2,0.5){$\circ$}
\put(1.6,0.93){\scriptsize{$\alpha_1$}}
\put(1.6,0.08){\scriptsize{$\alpha_r$}}
\put(1.7,0.4){$\vdots$}
\end{picture}
\end{center}
Now we apply the functor $\res _e$ to the additive subcategory
$\add (\mathcal{C}_{i})$ of $\mod B$. Observe that
$\res_e(S_{(i,1)})=\res_e(S_{(i,2)})=...=\res_e(S_{(i,p_{i}-1)})=0$.
Moreover, since $F_i=\tau_BS_{(i,1)}$, we have
$\res_e(F_{i})=Z=(Z_x,Z_{\alpha})_{x,\alpha}$, where $Z_x=K$ for
the vertices $0, \omega$, and $Z_{\alpha}=0$ only for
$\alpha=\alpha_i$. Clearly, $\res_e(F_{i})$ is an indecomposable
$B'$-module. We shall now fix $E_{i}=S_{(i,1)}[p_{i}]$. Let
$M_{i}=F_{i}[p_{i}+1]$. Since $\tau_B^-F_{i}=S_{(i,1)}$, there
exists an exact sequence of $B$-modules
\[\xymatrix{ 0 \ar[r] & F_{i} \ar[r]^(0.5){f} & M_{i} \ar[r]^(0.4)g & E_i\ar[r] &0}.\]
Then we have an exact sequence of $B'$-modules
\[\xymatrix{0 \ar[r] & \res_e(F_{i}) \ar[r]^(0.5){\res_e(f)} & \res_e(M_{i})
\ar[r]^(0.5){\res_e(g)} & \res_e(E_i)\ar[r] &0}\] because the
restriction functor $\res_e$ is exact (see \cite[Theorem
I.6.8]{ASS}). Observe that $\res_e(E_{i})=\res_e(F_{i})$.
Moreover, $W=\res_e (M_{i})$ is indecomposable, since $M_{i}$ is
indecomposable and, for $j \in \{1,...,m\}$, $W_{\alpha_j}: K^2
\rightarrow K^2$ is given by $W_{\alpha_j}=(M_{i})_{\varrho}$,
where $\varrho=\alpha_{j,
p_j}\alpha_{j,p_{j-1}}...\alpha_{j,2}\alpha_{j,1}$.

We are now in  the position to show that $\res_e(f)$ is a left
almost split homomorphism in $\mod B'$. Since $\res_e(M_{i})$ is
indecomposable and $\res_e(g)\neq 0$, $\res_e(f)$ is not a
section. Let $u: \res_e(F_{i})\rightarrow U$, for some
indecomposable $B'$-module $U$, be a non-zero homomorphism which
is not a section. Since $\End_{B'}(\res_e(F_{i}))\cong K$, it
follows that $U \not\cong \res_e(F_{i})$. Invoking the extension
functor $L_e: \mod B' \rightarrow \mod B$,
$L_e(-)=\Hom_{B'}(Be,-)$, which is right adjoint to $\res_e$, we
obtain that there exists a homomorphism $v: F_{i} \rightarrow
L_e(U)$ of $B$-modules such that $u =\res_e(v)$. The functor $L_e$
preserves indecomposability of modules, thus $L_e(U)$ is
indecomposable. Moreover, there is a functorial isomorphism
$\res_e L_e \cong 1_{\mod B}$ (see \cite[Theorem I.6.8]{ASS}).
Hence, $v$ is not a section. We claim that $L_e(U)$ is not of the
form $F_{i}[j]$ for $j \in \{1,...,p_{i}\}$. Observe that
$\res_e(F_{i}[j])=\res_e(F_{i})$ for any $j \in \{1,...,p_{i}\}$.
Then the claim follows from the facts that $\res_eL_e(U)\cong U$
and $U\not\cong \res_e(F_{i})$. Therefore, $v: F_{i} \rightarrow
L_e(U)$ is a composition $wf_{p_{i}}...f_1$ for some homomorphism
$w: M_{i} \rightarrow L_e(U)$ and irreducible homomorphisms $f_j:
F_{i}[j] \rightarrow F_{i}[j+1]$ for $j \in \{1,...,p_{i}\}$ (see
\cite[Lemma IV.5.1]{ASS}). Thus $v$ factorizes through
$M_{i}=F_{i}[p_i+1]$. Invoking now the restriction functor
$\res_e$, we conclude that $u=\res_e(v)$ factorizes through
$\res_e(M_{i})$. It shows that $u$ is left almost split  and, by
\cite[Theorem IV.1.13]{ASS},
\[\xymatrix{0 \ar[r] & \res_e(F_{i}) \ar[r]^{\res_e(f)} & \res_e(M_{i}) \ar[r]^(0.5){\res_e(g)} & \res_e(E_i)\ar[r] &0}\]
is an almost split sequence in $\mod B'$. Consequently, the image
of $\mathcal{C}_{i}$ by the  functor $\res_e$ is a homogenous tube
of $\Gamma_{B'}$. Again, by \cite[(XVIII.1.6)]{SS2}, we conclude
that $r=2$ since $B'$ is a tame algebra. Hence $\dim_K
e_{\omega}Be_0=2$.


Let us denote by $\varrho_i$ the path
$\alpha_{i,p_i}...\alpha_{i,1}$ if $1 \leq i \leq m$, and the
arrow $\alpha_i$ if $m+1 \leq i \leq r$. Assume now that $\lambda
\varrho_i=\varrho_j$ for some $j \neq i$, $1 \leq j,i \leq r$ and
non-zero $\lambda \in K$. Fix a stable tube $\mathcal{C}_i \in
\mathcal{C}$ which contains the simple modules $S(x)$ at the
vertices $x \in \{(i,1), (i,2),...,(i,p_i-1)\}$. For simplicity,
denote the unique non-simple module $F_i$ which lies on the mouth
of $\mathcal{C}_i$ by $F$. Observe that
$F_{\alpha_{i,1}}=...=F_{\alpha_{i,p_i}}=0$ and hence
$F_{\varrho_i}=F_{\alpha_{i,p_i}...\alpha_{i,1}}=0$. Then
$F_{\varrho_j}=F_{\lambda\varrho_i}=0$ which is impossible since
$F=\tau_B S_{(i,1)}$.


Without loss of generality we may now assume that $\varrho_1,
\varrho_2$ form a basis of $e_{\omega}Be_0$. From the above
consideration we conclude that the equations which describe
$\varrho_3,...,\varrho_m, \alpha_{m+1}, ..., \alpha_r$ define the
set $\Omega$ of generic relations (in the sense of \cite{Ri}) in
$KQ_B$. Then, by \cite[(3.7)]{Ri}, $B \cong KQ_B / \langle
\Omega\rangle$ is a canonical algebra and $\mathcal{C}$ is a
canonical separating $\mathbb{P}_1(K)$-family of stable tubes of
$\Gamma_{B}$. Note that $B$ is a canonical algebra of type
({\boldmath$p$},{\boldmath$\lambda$}), where the weight sequence
{\boldmath$p$} contains a subsequence $(p_1,...,p_m)$ and the
parameter sequence {\boldmath$\lambda$} is determined by relations
in $\Omega$. Following \cite[Lemma 1.1]{KS1} (see also
\cite[(3.7)]{Ri}), if $m=r \geq 3$, we may assume that $p_i \geq
2$ for each $i \in \{1,...,m\}$.
%
\end{proof}

\section{Quasi-tube enlargements of algebras}

In this section we introduce quasi-tube enlargements of algebras which are essential in the paper.

A connected translation quiver $\Gamma$ is said to be a {\it
quasi-tube} if $\Gamma$ can be obtained from a stable tube by an
iterated application of admissible operations $(ad  1), (ad  2)$,
$(\ad 1 ^{\ast})$, or $(\ad  2^{\ast})$ (see \cite[Section
2]{AS1}, \cite[Section 2]{AST} for details). The following
proposition provides a characterization of quasi-tubes in
Auslander-Reiten quivers of selfinjective algebras  (\cite{L},
\cite{MS1}, \cite{Z}).
\begin{prop}
Let $A$ be a selfinjective algebra and $\Gamma$ a connected component of $\Gamma_A$. The following statements are equivalent.
\begin{enumerate}[\rm (i)]
\item  $\Gamma$ is a quasi-tube.
\item  The stable part $\Gamma^s$ of $\Gamma$ is a stable tube.
\item  $\Gamma$ contains an oriented cycle.
\end{enumerate}
\end{prop}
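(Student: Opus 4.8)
The plan is to prove the cycle of implications (i) $\Rightarrow$ (ii) $\Rightarrow$ (iii) $\Rightarrow$ (i), relying on the general theory of admissible operations on translation quivers. For (i) $\Rightarrow$ (ii): a quasi-tube is by definition obtained from a stable tube by finitely many admissible operations of types $(\ad 1)$, $(\ad 2)$, $(\ad 1^{\ast})$, $(\ad 2^{\ast})$. Each such operation modifies only a finite part of the translation quiver — it inserts finitely many new (nonstable) vertices and leaves the complement of a finite set unchanged up to isomorphism. Hence the stable part $\Gamma^s$ of the resulting quasi-tube coincides with the stable part of the original stable tube, which is again a stable tube of the form $\mathbb{Z}\mathbb{A}_{\infty}/(\tau^r)$; more precisely one checks directly from the description of the four operations that removing the (finitely many) projective-injective vertices and their $\tau$-orbits returns a stable tube. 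I would simply cite \cite[Section 2]{AS1}, \cite[Section 2]{AST} for the bookkeeping here, as this is the content of how those operations are set up.

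For (ii) $\Rightarrow$ (iii): if $\Gamma^s$ is a stable tube, then $\Gamma^s$ contains oriented cycles (every stable tube does, e.g. the mesh around any mouth vertex together with the $\tau$-action produces a cyclic path), and since $\Gamma^s$ is a full translation subquiver of $\Gamma$, these cycles are oriented cycles in $\Gamma$. This implication is essentially immediate.

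The substantive implication is (iii) $\Rightarrow$ (i), and this is where I expect the real work — or rather, the real citation — to lie. Here one uses the hypothesis that $A$ is \emph{selfinjective}. The point is that a connected component $\Gamma$ of $\Gamma_A$ containing an oriented cycle must, by the structure theory of Auslander-Reiten components, have a very restricted shape. For a selfinjective algebra one appeals to the classification of components with oriented cycles: by the results of Liu, and of Malicki-Skowro\'nski, and Zhang (\cite{L}, \cite{MS1}, \cite{Z}), a component of $\Gamma_A$ containing an oriented cycle either is a quasi-tube or its stable part is of the form $\mathbb{Z}\mathbb{A}_{\infty}^{\infty}$, $\mathbb{Z}\mathbb{A}_{\infty}/(\tau^r)$, or is obtained from such by the relevant constructions; the selfinjectivity then forces the projective-injective vertices to sit inside the component in precisely the configuration produced by the admissible operations, and excludes the acyclic possibilities. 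I would therefore structure this step as: first invoke the general result that a component with an oriented cycle has stable part a stable tube or a $\mathbb{Z}\mathbb{A}_{\infty}^{\infty}$-type quiver; second, rule out the latter using the presence of the cycle together with selfinjectivity; third, conclude via the characterization of components with stable part a stable tube over a selfinjective algebra as exactly the quasi-tubes.

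The main obstacle is making the third step precise without re-proving the cited theorems: one must verify that over a selfinjective algebra, a component whose stable part is a stable tube is genuinely a quasi-tube in the admissible-operations sense — i.e.\ that the nonstable vertices are inserted in the prescribed way. This is exactly the content of \cite{L}, \cite{MS1}, \cite{Z}, so in practice the proof will consist of a careful assembly of those statements rather than a calculation; the only thing to be careful about is the equivalence between "obtained by admissible operations" and "stable part is a stable tube, glued compatibly with the projective-injectives," which the references establish.
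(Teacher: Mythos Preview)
The paper does not prove this proposition at all: it is stated as a known characterization, with the references \cite{L}, \cite{MS1}, \cite{Z} given immediately before the statement, and no proof follows. Your proposal is therefore consistent with the paper's treatment --- you correctly identify the same three references as carrying the substantive implication (iii) $\Rightarrow$ (i), and your sketch of (i) $\Rightarrow$ (ii) $\Rightarrow$ (iii) is accurate and more detailed than anything the paper supplies.

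One minor caution on your (i) $\Rightarrow$ (ii): the general admissible operations $(\ad 1)$, $(\ad 1^{\ast})$ insert entire infinite rays/corays, not just finitely many vertices, so ``modifies only a finite part'' is not literally correct for arbitrary quasi-tubes; what makes the implication work here is the selfinjectivity hypothesis, which forces every nonstable vertex to be projective-injective (so the quasi-tube is \emph{smooth} in the paper's terminology), and then removing these finitely many projective-injective vertices does recover a stable tube. You gesture at this but blend the translation-quiver and module-category levels; if you were writing this out in full you would want to be explicit that the selfinjective assumption is already needed for (i) $\Rightarrow$ (ii), not only for (iii) $\Rightarrow$ (i).
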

Recall that the stable part $\Gamma^s$ of $\Gamma$ is obtained
from $\Gamma$ by removing the projective-injective modules and the
arrows attached to them.

Let $A$ be an algebra and $X$ a module in $\mbox{mod}\,A$. The \textit{one-point extension} of $A$ by $X$ is the $2\times 2$-matrix algebra
$$A[X]=\left[\begin{matrix}A&0\cr _KX_A&K\end{matrix}\right]=\left\{\left(\begin{matrix}a&0\cr x&\lambda\end{matrix}\right);\,a\in A,\, \lambda\in K,\,x\in X\right\}$$
with the usual addition of matrices and the multiplication induced from the canonical $K$-$A$-bimodule structure $_KX_A$ of $X$.  Dually, the \textit{one-point coextension} of $A$ by $X$ is the $2\times 2$-matrix algebra
$$[X]A=\left[\begin{matrix}K&0\cr _AD(X)_K&A\end{matrix}\right]=\left\{\left(\begin{matrix}\lambda&0\cr f&a\end{matrix}\right);\,a\in A,\, \lambda\in K,\,f \in D(X)\right\}$$
with the usual addition of matrices and the multiplication induced from the canonical $A$-$K$-bimodule structure $_AD(X)_K$ of $D(X)=\mbox{Hom}_K(_KX_A,K)$.

For an algebra $A$ let $\Gamma$ be a generalized standard
component of $\Gamma_A$. For each indecomposable module $X$ in
$\Gamma$ which is a pivot of an admissible operation of type $(\ad
1), (\ad 2), (\ad  1^{\ast})$, or $(\ad  2^{\ast})$, we shall
define the corresponding admissible operation on $A$ in such a way
that the modified translation quiver $\Gamma'$ is a component of
the Auslander-Reiten quiver $\Gamma_{A'}$ of the modified algebra
$A'$ (see \cite{AS1}, \cite{AST}). Since $\Gamma$ is generalized
standard, such a pivot $X$ is necessarily a brick. Suppose that
$X$ is the pivot of an admissible operation of type $(\ad  1)$ and
that $t$ is a positive integer. Denote $H=H_t$ the full $t \times
t$ upper triangular matrix algebra over $K$ and by $Y$ the unique
indecomposable projective-injective $H$-module, which  we consider
as a $K$-$H$-bimodule. Then $A'=(A \times H)[X \oplus Y]$ is the
required modified algebra. If $X$ is the pivot of an admissible
operation of type $(\ad  2)$, then the modified algebra $A'$ is
defined to be $A'=A[X]$. Dually, invoking the one-point
coextensions, one defines the modified algebra $A'$, if $X$ is a
pivot of an admissible operation of type $(\ad  1^{\ast})$ or
$(\ad 2^{\ast})$. Then the following fact mentioned above holds
(see \cite[Section 2]{AS1}).
\begin{lem}
The modified translation quiver $\Gamma'$ of $\Gamma$ is a component of $\Gamma_{A'}$.
\end{lem}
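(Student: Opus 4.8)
The plan is to deal with the four admissible operations separately and to reduce the starred ones to the unstarred ones by duality. Since the one-point coextension $[X]A$ equals $(A^{\op}[\D(X)])^{\op}$, the assertions for operations of type $(\ad 1^{\ast})$ and $(\ad 2^{\ast})$ on $(A,\Gamma)$ will follow from the ones of type $(\ad 1)$ and $(\ad 2)$ applied to $A^{\op}$ and to the component of $\Gamma_{A^{\op}}$ corresponding to $\Gamma$ under $\D$. For the two unstarred operations the main tool is the standard description of the module category of a one-point extension: the extension functor identifies $\mod A$ (respectively $\mod(A\times H)$ in the case of $(\ad 1)$) with the full subcategory of $\mod A'$ formed by the modules annihilated by the idempotent at the new vertex, this subcategory is closed under extensions, and it contains every indecomposable $A'$-module except those whose top involves the new simple module $S=\top P$, where $P=P(A')$ is the new indecomposable projective with $\rad P$ equal to the pivot $X$ (respectively $X\oplus Y$). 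Consequently every almost split sequence in $\mod A$ whose three terms avoid $S$ in their tops remains almost split in $\mod A'$, so the only meshes of $\Gamma$ that can be affected by passing to $A'$ are those meeting $P$.

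First I would treat an operation of type $(\ad 2)$, where $A'=A[X]$. Recall (as noted just before the statement) that generalized standardness of $\Gamma$ forces the pivot $X$ to be a brick; hence for the infinite sectional ray $X=X[0]\to X[1]\to X[2]\to\cdots$ in $\Gamma$ the spaces $\Hom_A(X,X[j])$ are one-dimensional and the ray consists of monomorphisms. Working inside $\mod A[X]$ via its matrix description, I would verify that $P$ is indecomposable with $\rad P=X$ and $\top P=S$, compute the almost split sequences ending at, and starting at, each of $P$ and the modules $X[j]$, $j\geq 0$, and check that all other meshes of $\Gamma$ are unchanged; comparison with the recipe of $(\ad 2)$ then shows that the modified translation quiver $\Gamma'$ — namely $\Gamma$ with $P$ inserted and the ray through $X$ reorganized as prescribed — is realized inside $\Gamma_{A'}$. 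Generalized standardness of $\Gamma$ is used once more to conclude that no irreducible morphism of $\mod A'$ joins a module of $\Gamma'$ to a module outside $\Gamma'$, so $\Gamma'$ is a full connected component of $\Gamma_{A'}$, not merely a subquiver.

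For an operation of type $(\ad 1)$ with integer parameter $t$, where $A'=(A\times H)[X\oplus Y]$ with $H=H_t$ and $Y$ the unique indecomposable projective-injective $H$-module, I would proceed analogously, now using that $\mod(A\times H)=\mod A\times\mod H$ and that $\Gamma_H$ is the finite translation quiver of the linearly oriented quiver $\mathbb{A}_t$, with $Y$ sitting at its projective-injective corner. The new projective $P=P(A')$ has $\rad P=X\oplus Y$, and the crucial computation is that the almost split sequences of $\mod A'$ around $P$ glue the sectional ray of $\Gamma$ issuing from $X$ to the ray of $\Gamma_H$ issuing from $Y$ exactly along the combinatorial pattern of $(\ad 1)$, all other meshes of $\Gamma$ and of $\Gamma_H$ remaining untouched; this exhibits $\Gamma'$ as a component of $\Gamma_{A'}$.

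The main obstacle is the local computation of almost split sequences around $P$ in both cases: one has to check that the relevant pushouts and pullbacks along the pivot reproduce precisely the meshes demanded by the admissible operation and, in particular, that the middle terms carry no unexpected indecomposable summands. The bricklikeness of $X$ granted by generalized standardness is exactly what makes $P$ occupy a single mesh in the expected way; granting this, the remaining verifications — indecomposability of the $A'$-modules forming $\Gamma'$, exhaustion of the $\tau_{A'}$-orbits, and closedness of $\Gamma'$ under irreducible maps — are routine and are carried out in detail in \cite[Section 2]{AS1} and \cite[Section 2]{AST}.
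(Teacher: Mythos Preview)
The paper does not supply its own proof of this lemma: immediately before the statement it writes ``Then the following fact mentioned above holds (see \cite[Section 2]{AS1})'' and leaves it at that. Your sketch is a faithful outline of the argument one finds in that reference (and in \cite[Section 2]{AST}), with the same ingredients---duality to reduce the starred operations, the matrix description of modules over a one-point extension, the brick property of the pivot coming from generalized standardness, and the local recomputation of almost split sequences around the new projective---so there is nothing to correct or contrast.
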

Let now $C$ be an algebra and $\mathcal{T}$ a generalized standard
family of stable tubes in $\Gamma_C$. Following \cite{AST}, an
algebra $B$ is said to be a {\it quasi-tube enlargement} of $C$
using modules from $\mathcal{T}$ if there is a finite sequence of
algebras $A_0=C, A_1,...,A_m=B$ such that, for each $0 \leq j <
m$, $A_{j+1}$ is obtained from $A_j$ by an admissible operation of
type $(\ad  1), (\ad  2), (\ad  1^{\ast})$, or $(\ad  2^{\ast})$,
with pivot either in a stable tube of $\mathcal{T}$ or in a
quasi-tube of $\Gamma_{A_j}$ obtained from a stable tube of
$\mathcal{T}$ by means of the sequence of admissible operations
(of types $(\ad  1), (\ad  2), (\ad  1^{\ast}), (\ad  2^{\ast})$)
done so far. We note that a \textit{tubular extension }
(respectively, \textit{tubular coextension}) of $C$ (in the sense
of \cite{Ri}), using modules from $\mathcal{T}$, is just an
enlargement of $C$ invoking only admissible operations of type
$(\ad  1)$ (respectively, of type $(\ad  1^{\ast})$).
\begin{prop} \label{prop3.?}
Let  $B$ be a quasi-tube enlargement of an algebra $C$ using modules from a generalized standard family $\mathcal{T}$ of stable tubes of $\Gamma_C$, and $\mathcal{C}$ the family of components of $\Gamma_B$ obtained from $\mathcal{T}$ by means of admissible
operations leading from $C$ to $B$. Then $\mathcal{C}$ is a generalized standard family of quasi-tubes of $\Gamma_B$.
\end{prop}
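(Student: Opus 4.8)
The plan is to proceed by induction on the length $m$ of the sequence of admissible operations $A_0 = C, A_1, \dots, A_m = B$ that defines the quasi-tube enlargement. For $m = 0$ there is nothing to prove, since $\mathcal{T}$ is generalized standard by hypothesis. For the inductive step, I would assume that the family $\mathcal{C}^{(j)}$ of components of $\Gamma_{A_j}$ obtained from $\mathcal{T}$ by the first $j$ admissible operations is a generalized standard family of quasi-tubes of $\Gamma_{A_j}$, and that $A_{j+1}$ is obtained from $A_j$ by an admissible operation of one of the types $(\ad 1), (\ad 2), (\ad 1^{\ast}), (\ad 2^{\ast})$ with pivot $X$ lying in one of the quasi-tubes $\mathcal{C}^{(j)}_{\lambda}$. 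Since $\mathcal{C}^{(j)}$ is generalized standard, the pivot $X$ is automatically a brick, so the modified algebra $A_{j+1}$ and the modified translation quiver $\Gamma'$ are defined as in Lemma~3.3, and by that lemma $\Gamma'$ is a component of $\Gamma_{A_{j+1}}$; by Proposition~3.1 (and the definition of quasi-tube, being closed under the admissible operations) $\Gamma'$ together with the quasi-tubes of $\mathcal{C}^{(j)}$ not affected by the operation forms a family $\mathcal{C}^{(j+1)}$ of quasi-tubes of $\Gamma_{A_{j+1}}$.

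The core of the argument is to show that $\mathcal{C}^{(j+1)}$ is again generalized standard, i.e. $\rad^{\infty}_{A_{j+1}}(U, V) = 0$ for all indecomposable modules $U, V$ in $\mathcal{C}^{(j+1)}$. Here I would use the fact that $A_{j+1}$ is a one-point extension (respectively coextension, or a one-point extension of $A_j \times H_t$) of $A_j$, so that $\mod A_j$ embeds fully and exactly in $\mod A_{j+1}$ and the restriction/extension functors relate radicals of morphism spaces in a controlled way. Concretely, a nonzero map in $\rad^{\infty}_{A_{j+1}}(U, V)$ between modules of $\mathcal{C}^{(j+1)}$ would have to factor through arbitrarily long paths in $\Gamma_{A_{j+1}}$; by the local description of how $(\ad 1)$–$(\ad 2^{\ast})$ modify the component (a finite "rectangular" insertion together with the attached projective-injectives), any such infinite-radical map would restrict to a nonzero infinite-radical map between indecomposable modules of $\mathcal{C}^{(j)}$ in $\mod A_j$, contradicting the inductive hypothesis — one has to treat separately the finitely many "new" modules created by the operation and the projective-injective $H_t$-module(s) $Y$, for which the relevant morphism spaces are finite-dimensional in a way that forces the infinite radical to vanish on them. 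This is essentially the analysis carried out in \cite{AS1} and \cite{AST}, and I would cite those sources for the combinatorial bookkeeping rather than repeat it.

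The main obstacle is precisely this last point: controlling $\rad^{\infty}$ under the passage from $A_j$ to $A_{j+1}$. The subtlety is that the modified component $\Gamma'$ has more vertices than the original quasi-tube, and the new vertices (as well as the ray/coray insertions) could a priori carry infinite-radical maps that were invisible at the level of $A_j$; one must verify that every morphism between indecomposables of $\Gamma'$ factors through a bounded portion of $\Gamma'$ together with the finite-representation-type algebra $H_t$, so that the only possible source of $\rad^{\infty}$ is a map whose restriction to $\mod A_j$ already lies in $\rad^{\infty}_{A_j}$ between modules of $\mathcal{C}^{(j)}$. Once this is in hand, the induction closes and $\mathcal{C} = \mathcal{C}^{(m)}$ is the desired generalized standard family of quasi-tubes of $\Gamma_B$. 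I would also remark that an alternative route is to invoke directly the results of \cite{AST} on quasi-tube enlargements, where the generalized standardness of the resulting family is established in the relevant generality; the proof above is really an unwinding of that machinery adapted to the present setup.
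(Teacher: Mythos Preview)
Your proposal is correct and aligns with the paper's own treatment: the paper does not actually give a proof but simply refers to \cite[Lemmas 2.2, 2.3]{AS1} and \cite[Theorem C]{MS2}, and your inductive outline is precisely an unwinding of the argument in those references. The only minor discrepancy is that you invoke \cite{AST} alongside \cite{AS1}, whereas the paper pairs \cite{AS1} with \cite{MS2}; the latter supplies the generalized standardness statement in ready-made form, so citing it would spare you the ``main obstacle'' paragraph.
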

For the proof of the above proposition we refer to \cite[Lemma 2.2, 2.3]{AS1} and \cite[Theorem C]{MS2}. We end this section with the following consequence for a canonical algebra $C$ \cite[Theorem XV.3.9]{SS2}.
\begin{prop} \label{prop A}
Let $C$ be a canonical algebra and $\mathcal{T}^C$ the canonical $\mathbb{P}_1(K)$-family of standard stable tubes of $\Gamma_C$. For an algebra $A$ the following equivalences hold.
\begin{enumerate}[\rm (i)]
\item $A$ is a $\mathcal{T}^C$-tubular extension of $C$ if and only if $A$ is a branch $\mathcal{T}^C$-extension of $C$.
\item $A$ is a $\mathcal{T}^C$-tubular coextension of $C$ if and only if $A$ is a branch $\mathcal{T}^C$-coextension of $C$.
\end{enumerate}
\end{prop}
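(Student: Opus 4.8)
The plan is to obtain the proposition as a direct consequence of the structure theory of tubular (co)extensions of canonical and concealed-canonical algebras developed in \cite[Chapter XV]{SS2}, concretely from \cite[Theorem XV.3.9]{SS2}. First I would record the ingredients already available: by Section 2 the canonical $\mathbb{P}_1(K)$-family $\mathcal{T}^C$ is a standard, hence generalized standard, family of pairwise orthogonal stable tubes separating $\mathcal{P}^C$ from $\mathcal{Q}^C$, so the framework of admissible operations of \cite{AS1}, \cite{AST} applies to it; and, as noted just before the statement, a $\mathcal{T}^C$-tubular extension of $C$ is precisely an enlargement of $C$ built by a finite sequence of admissible operations of type $(\ad 1)$ with pivots in the tubes of $\mathcal{T}^C$ or in the ray tubes successively produced, while a $\mathcal{T}^C$-tubular coextension uses only operations of type $(\ad 1^{\ast})$.

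For part (i) I would prove the two implications separately. For "tubular $\Rightarrow$ branch", I would induct on the length $m$ of a defining sequence $C = A_0, A_1, \dots, A_m = A$ of $(\ad 1)$ operations. A single step $A_{j+1} = (A_j \times H_t)[X \oplus Y]$ with pivot a brick $X$ on one of the quasi-tubes at stage $j$ amounts, combinatorially, to attaching one more vertex to the branch hanging at a module of $\mathcal{T}^C$; keeping track, for each tube, of the module at which a branch is rooted together with the branch algebra accumulated there yields the data $(E_1, L_1; \dots; E_s, L_s)$ exhibiting $A$ as a branch $\mathcal{T}^C$-extension. This analysis is exactly the one carried out in \cite[Section 2]{AS1} and \cite[Chapter XV]{SS2}. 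For "branch $\Rightarrow$ tubular", given a branch $\mathcal{T}^C$-extension $C[E_1, L_1; \dots; E_s, L_s]$ I would linearly order the vertices of each branch $L_i$ compatibly with its arrows and realize the corresponding successive one-point extensions as $(\ad 1)$ operations; at each stage the relevant pivot is still a brick on the enlarged quasi-tube, because $\mathcal{T}^C$ is generalized standard (Proposition 2.1) and admissible $(\ad 1)$ operations preserve generalized standardness of the resulting family of quasi-tubes (the preceding proposition), so each operation is admissible and the procedure terminates at $A$.

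For part (ii) I would reduce to (i) by duality. The opposite algebra $C^{\op}$ is again a canonical algebra, and the standard duality $\D : \mod C \to \mod C^{\op}$ carries $\mathcal{T}^C$ onto the canonical family $\mathcal{T}^{C^{\op}}$ of $\Gamma_{C^{\op}}$ while turning rays into corays; hence it interchanges admissible operations of type $(\ad 1^{\ast})$ on $\mathcal{T}^C$ with operations of type $(\ad 1)$ on $\mathcal{T}^{C^{\op}}$, and branch $\mathcal{T}^C$-coextensions of $C$ with branch $\mathcal{T}^{C^{\op}}$-extensions of $C^{\op}$. Thus $A$ is a $\mathcal{T}^C$-tubular coextension of $C$ if and only if $A^{\op}$ is a $\mathcal{T}^{C^{\op}}$-tubular extension of $C^{\op}$, which by (i) holds if and only if $A^{\op}$ is a branch $\mathcal{T}^{C^{\op}}$-extension of $C^{\op}$, i.e.\ if and only if $A$ is a branch $\mathcal{T}^C$-coextension of $C$.

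The one genuine difficulty is concentrated in the forward implication of (i): verifying that an arbitrary finite sequence of $(\ad 1)$ operations, whose pivots may lie arbitrarily deep inside the quasi-tubes created so far, organizes itself into the tree-shaped branch combinatorics — in particular that distinct branches do not interact and that the root of each branch is pinned to a well-defined module of the original family $\mathcal{T}^C$. This bookkeeping is precisely the content of \cite[Chapter XV]{SS2} (see also \cite{AST}), and it is that theorem which I would invoke rather than reprove here.
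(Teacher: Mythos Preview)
Your proposal is correct and aligns with the paper's treatment: the paper does not give a proof at all but simply records the proposition as a direct consequence of \cite[Theorem XV.3.9]{SS2}, which is exactly the reference you ultimately invoke. Your sketch of the inductive and duality arguments is more detailed than anything the paper provides, but the underlying approach is the same.
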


Recall from \cite[Section 2]{AST}, \cite[Section 4]{Ri}  that for a branch extension $B$  of a
canonical algebra $C$ the Auslander-Reiten quiver $\Gamma_B$ has a
disjoint union decomposition
$$\Gamma_B=\mathcal{P}^B\vee\mathcal{T}^B\vee\mathcal{Q}^B,$$
where $\mathcal{P}^B=\mathcal{P}^C$ is a family of components consisting
of $C$-modules and containing all indecomposable projective $C$-modules,
$\mathcal{Q}^B$ is a family of components containing all indecomposable
injective $B$-modules but no projective $B$-module, and $\mathcal{T}^B$
is a $\mathbb{P}_1(K)$-family $(\mathcal{T}^B_{\lambda})_{\lambda\in\mathbb{P}_1(K)}$
of pairwise orthogonal standard ray tubes separating $\mathcal{P}^B$
from $\mathcal{Q}^C$.
Respectively, for branch coextension $B$,  $\mathcal{P}^B$ is a family of
components containing all indecomposable projective $B$-modules but no
injective $B$-modules, $\mathcal{Q}^B=\mathcal{Q}^C$ is a family of
components consisting of $C$-modules and containing all indecomposable
injective $C$-modules, and $\mathcal{T}^B$ is a $\mathbb{P}_1(K)$-family
$(\mathcal{T}^B_{\lambda})_{\lambda\in\mathbb{P}_1(K)}$ of pairwise orthogonal
standard coray tubes separating $\mathcal{P}^B$ from $\mathcal{Q}^C$.

\section{Selfinjective orbit algebras}

In this section we recall needed background on selfinjective orbit algebras.\\

Let $B$ be an algebra and $\mathcal{E}_B =\{e_i\,|1\leq i\leq n\}$
be a fixed set of orthogonal  primitive idempotents of $B$ with
$1_B=e_1+\ldots+e_n$. Then the {\it repetitive category}
$\widehat{B}$ of $B$ is the category with
$\widehat{\mathcal{E}}_B=\{e_{m,i}\,|m\in\mathbb{Z},\, 1\leq i\leq
n\}$ as a set of objects of $\widehat{B}$ and the morphism spaces
defined by
$$\widehat{B}(e_{m,i}, e_{r,j})=\left\{\begin{array}{ccl} e_jBe_i, &  &  r=m \\D(e_iBe_j), & & r=m+1\\0, &  & \mbox{otherwise}\end{array}\right.$$
and the composition of morphisms given by the multiplication in
$B$ and the canonical $B$-$B$-bimodule structure of
$D(B)=\mbox{Hom}_K(B,K)$. For each $m \in \mathbb{Z}$, we denote
by $B_m$ the full subcategory of $\widehat{B}$ given by the
objects $e_{m, i}$ for all $i \in \{1,...,n\}$. Observe that
$\widehat{B}$ is a selfinjective locally bounded $K$-category. An
automorphism $\varphi$ of $\widehat{B}$ is said to be
\begin{itemize}
\item \textit{positive} if, for each pair $(m,i)\in \mathbb{Z}\times \{1,\ldots,n\}$, we have $\varphi(e_{m,i})=e_{p,j}$ for some $p\geq m$ and some $j\in\{1,\ldots,n\}$;
\item \textit{rigid} if, for each pair $(m,i)\in \mathbb{Z}\times \{1,\ldots,n\}$, we have $\varphi(e_{m,i})=e_{m,j}$ for some $j\in\{1,\ldots,n\}$;
\item \textit{strictly positive} if it is positive but not rigid.
\end{itemize}
An important role is played by the {\it Nakayama automorphism}
$\nu_{\widehat{B}}$ of $\widehat{B}$ which is defined by
$$\nu_{\widehat{B}}(e_{m,i})=e_{m+1,i}\;, \quad \text{for all} \quad (m,i) \in \mathbb{Z} \times \{1, ...,n\}.$$

\noindent Note that the Nakayama automorphism $\nu_{\widehat{B}}$
is a strictly positive automorphism of $\widehat{B}$. A group $G$
of automorphisms of $\widehat{B}$ is said to be {\it admissible}
if it acts freely on the set $\widehat{\mathcal{E}}_B$ and has
finitely many orbits.

Let $B$ be an algebra and $G$ be an admissible group of
automorphisms of $\widehat{B}$. Following Gabriel \cite{G}, we may
consider the finite orbit $K$-category $\widehat{B}/G$ defined as
follows. The objects of $\widehat{B}/G$ are the elements $a=Gx$ of
the set $\widehat{\mathcal{E}}_B/G$ of $G$-orbits in
$\widehat{\mathcal{E}}_B$ and the morphism spaces are given by
{\small{$$(\widehat{B}/G)(a,b)=\left\{(f_{y,x})\in \prod_{(x,y)\in
a\times b}\widehat{B}(x,y)| \quad g\cdot f_{y,x}=f_{gy,gx}\mbox{
for all }g\in G,\,x\in a,\,y\in b\right\},$$}} \noindent for all
objects $a,\,b$ of $\widehat{B}/G$. Then we have a canonical
\textit{Galois covering functor} $F:\widehat{B}\rightarrow
\widehat{B}/G$ which assigns to each object $x$ of $\widehat{B}$
its $G$-orbit $Gx$, and, for any objects $x$ of $\widehat{B}$ and
$a$ of $\widehat{B}/G$, $F$ induces natural $K$-linear
isomorphisms
$$\bigoplus_{y\in \widehat{\mathcal{E}}_B,Fy=a}\widehat{B}(x,y)\tilde{\longrightarrow}(\widehat{B}/G)(Fx,a),$$
$$\bigoplus_{y\in \widehat{\mathcal{E}}_B,Fy=a}\widehat{B}(y,x)\tilde{\longrightarrow}(\widehat{B}/G)(a,Fx).$$\\
The finite dimensional algebra $\bigoplus_{a,b\in \widehat{\mathcal{E}}/G}(\widehat{B}/G)(a,b)$ associated to the orbit category $\widehat{B}/G$ is a selfinjective algebra, denoted by $\widehat{B}/G$ and called an {\it orbit algebra} of $\widehat{B}$, with respect to the admissible automorphism group $G$ of $\widehat{B}$. The group $G$ acts also on the category $\mbox{mod}\,\widehat{B}$ of right $\widehat{B}$-modules (identified with contravariant functors from $\widehat{B}$ to $\mbox{mod}\,K$ with finite support) by $gM=M\circ g^{-1}$ for any $M\in \mbox{mod}\,\widehat{B}$ and $g\in G$. Further, we have the \textit{push-down functor} $F_{\lambda}:\mbox{mod}\,\widehat{B}\rightarrow \mbox{mod}\,\widehat{B}/G$ such that $F_{\lambda}(M)(a)=\bigoplus_{x\in a}M(x)$ for a module $M$ in $\mbox{mod}\,\widehat{B}$ and an object $a$ of $\widehat{B}/G$.\\

The following theorem is a consequence of \cite[Lemma 3.5, Theorem 3.6]{G}.
\begin{thm}
Let $B$ be an algebra and $G$ a torsion-free admissible group of
$K$-linear automorphisms of $\widehat{B}$. Then
\begin{enumerate}
\renewcommand{\labelenumi}{(\roman{enumi})}
\item The push-down functor $F_{\lambda}:\mbox{mod}\,\widehat{B}\rightarrow \mbox{mod}\,\widehat{B}/G$ induces an injection from the set of $G$-orbits of isomorphism classes of indecomposable modules in $\mbox{mod}\,\widehat{B}$ into the set of isomorphism classes of indecomposable modules in $\mbox{mod}\,\widehat{B}/G$.
\item The push-down functor $F_{\lambda}:\mbox{mod}\,\widehat{B}\rightarrow \mbox{mod}\,\widehat{B}/G$ preserves the Auslander-Reiten sequences.
\end{enumerate}
\end{thm}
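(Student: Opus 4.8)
The plan is to derive both statements from Gabriel's general theory of Galois coverings \cite{G}. Since $G$ is admissible and torsion-free, it acts freely on the set of objects of the locally bounded $K$-category $\widehat{B}$ with finitely many orbits, so $F:\widehat{B}\to\widehat{B}/G$ is a Galois covering onto the finite-dimensional (selfinjective) algebra $\widehat{B}/G$, and we may work with the push-down functor $F_{\lambda}$ together with its right adjoint $F^{\cdot}$, the restriction along $F$.

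First I would establish the basic morphism-space formula: for any $M,N$ in $\mod\widehat{B}$ the adjunction $F_{\lambda}\dashv F^{\cdot}$ and the isomorphism $F^{\cdot}F_{\lambda}N\cong\bigoplus_{g\in G}gN$ yield a functorial, composition-compatible isomorphism $\Hom_{\widehat{B}/G}(F_{\lambda}M,F_{\lambda}N)\cong\bigoplus_{g\in G}\Hom_{\widehat{B}}(M,gN)$, in which the direct sum is finite because finite-dimensional $\widehat{B}$-modules have finite support and $G$ acts freely on objects. Specializing $M\mapsto gM$ this already gives $F_{\lambda}(gN)\cong F_{\lambda}N$, so $F_{\lambda}$ is constant on $G$-orbits. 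For $M=N$ indecomposable, torsion-freeness of $G$ forces the stabilizer $\{g : gM\cong M\}$ to be trivial (a nontrivial stabilizer would produce a nontrivial finite subgroup of units acting on the local ring $\End_{\widehat{B}}(M)$), so only the summand $g=1$ contributes an isomorphism; a Fitting-lemma argument then shows $\End_{\widehat{B}/G}(F_{\lambda}M)$ is local, i.e. $F_{\lambda}M$ is indecomposable.

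Statement (i) then follows quickly: if $M,N$ are indecomposable with $F_{\lambda}M\cong F_{\lambda}N$, decompose the isomorphism and its inverse through the above formula; since $\End_{\widehat{B}/G}(F_{\lambda}M)$ is local, some component $M\to gN$ must be invertible, so $M\cong gN$ and the $G$-orbits of $M$ and $N$ agree, which gives the asserted injection. For (ii) I would take an almost split sequence $0\to X\xrightarrow{u}Y\xrightarrow{v}Z\to 0$ in $\mod\widehat{B}$; exactness of $F_{\lambda}$ and the indecomposability just proved make $0\to F_{\lambda}X\xrightarrow{F_{\lambda}u}F_{\lambda}Y\xrightarrow{F_{\lambda}v}F_{\lambda}Z\to 0$ a non-split short exact sequence with indecomposable end terms, so by \cite[Theorem IV.1.13]{ASS} it suffices to check that $F_{\lambda}v$ is right almost split. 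This is Gabriel's \cite[Theorem 3.6]{G}; concretely, one lifts a non-retraction $h:W\to F_{\lambda}Z$ out of an indecomposable $W$ through the adjunction and the Hom-formula to a non-retraction into $Z$, which factors through $v$, and pushes the factorization back down to obtain $h$ through $F_{\lambda}v$.

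The main obstacle — really the only step that is not bookkeeping — is the first one: proving the Hom-decomposition and, crucially, its \emph{finiteness}, together with the triviality of module stabilizers for a torsion-free admissible $G$. These are precisely the contents of \cite[Lemma 3.5]{G}; granting them, (i) is a consequence of Fitting's lemma and (ii) of the standard lifting characterization of almost split sequences, so the theorem follows.
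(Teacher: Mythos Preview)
The paper does not give its own proof of this theorem; it merely records it as a consequence of \cite[Lemma 3.5, Theorem 3.6]{G}. Your proposal is precisely a reconstruction of those two results of Gabriel (the Hom-space decomposition with trivial stabilizers, and the preservation of almost split sequences), so your approach coincides with the paper's. One small wording issue: the reason the stabilizer of an indecomposable $M$ is finite is not really an argument about units of $\End_{\widehat{B}}(M)$, but the simpler observation that any $g$ with $gM\cong M$ must permute the finite support of $M$, on which $G$ acts freely; torsion-freeness then forces the stabilizer to be trivial.
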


In general, the push-down functor
$F_{\lambda}:\mbox{mod}\,\widehat{B}\rightarrow
\mbox{mod}\,\widehat{B}/G$,
 associated to a Galois covering $F:\widehat{B}\rightarrow\widehat{B}/G$ is not dense (see \cite{AS0}).
  Following \cite{DS1}, a repetitive category $\widehat{B}$ is said to be {\it locally support-finite},
  if for any object $x$ of $\widehat{B}$, the full subcategory of $\widehat{B}$ given by the supports
  $\mbox{supp}\,M$ of all indecomposable modules $M$ in $\mbox{mod}\,\widehat{B}$ with $M(x)\neq 0$, is
   finite. Here, by a {\it support} of a module $M$ in $\mbox{mod}\,\widehat{B}$ we mean the full
   subcategory of $\widehat{B}$ given by all objects $z$ with $M(z)\neq 0$.\\

The following theorem is a consequence of \cite[Proposition 2.5]{DS2} (see also \cite[Theorem]{DS1}).

\begin{thm}
Let $B$ be an algebra with locally support-finite repetitive
category $\widehat{B}$, and $G$ be a torsion-free admissible group
of automorphisms of $\widehat{B}$. Then the push-down functor
$F_{\lambda}:\mbox{mod}\,\widehat{B}\rightarrow
\mbox{mod}\,\widehat{B}/G$ is dense. In particular, $F_{\lambda}$
induces an isomorphism of the orbit translation quiver
$\Gamma_{\widehat{B}}/G$ of the Auslander-Reiten quiver
$\Gamma_{\widehat{B}}$ of $\widehat{B}$, with respect to the
action of $G$, and the Auslander-Reiten quiver
$\Gamma_{\widehat{B}/G}$ of $\widehat{B}/G$.
\end{thm}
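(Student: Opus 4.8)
The plan is to separate the claim into its two halves: the density of the push-down functor, which is the substantive point, and the induced identification of translation quivers, which will then follow formally from density together with Theorem 4.1.

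For the density, I would first note that we are in the standard covering-theoretic situation: $\widehat{B}$ is a selfinjective locally bounded $K$-category, and $G$, being torsion-free and admissible, acts freely on the objects of $\widehat{B}$ with only finitely many orbits, so that $F:\widehat{B}\rightarrow\widehat{B}/G$ is a Galois covering with group $G$. The extra hypothesis that $\widehat{B}$ is locally support-finite is exactly what guarantees that every indecomposable $\widehat{B}/G$-module lies in the image of the push-down functor: by \cite[Proposition 2.5]{DS2} (see also \cite[Theorem]{DS1}), for a locally support-finite locally bounded $K$-category equipped with a torsion-free admissible group of automorphisms the push-down functor $F_{\lambda}$ is dense. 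This gives the first assertion.

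For the second assertion, I would argue as follows. The group $G$ acts on $\mbox{mod}\,\widehat{B}$ by the auto-equivalences $M\mapsto M\circ g^{-1}$, hence on $\Gamma_{\widehat{B}}$ by automorphisms of translation quivers; moreover this action is free on the vertices of $\Gamma_{\widehat{B}}$, since a nontrivial $g\in G$ fixing an indecomposable module $M$ up to isomorphism would act freely on the finite nonempty set of objects of the support of $M$, which is impossible. Hence the orbit translation quiver $\Gamma_{\widehat{B}}/G$ is well defined. By Theorem 4.1(i) the push-down functor induces an injection from the $G$-orbits of isomorphism classes of indecomposable $\widehat{B}$-modules into the isomorphism classes of indecomposable $\widehat{B}/G$-modules, and by the density just established this injection is a bijection; this identifies the vertex sets of $\Gamma_{\widehat{B}}/G$ and of $\Gamma_{\widehat{B}/G}$. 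Finally, by Theorem 4.1(ii) the push-down functor sends Auslander-Reiten sequences in $\mbox{mod}\,\widehat{B}$ to Auslander-Reiten sequences in $\mbox{mod}\,\widehat{B}/G$, hence carries irreducible morphisms to irreducible morphisms and commutes with the Auslander-Reiten translation; therefore the bijection on vertices extends to an isomorphism of translation quivers $\Gamma_{\widehat{B}}/G\cong\Gamma_{\widehat{B}/G}$.

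The main obstacle is the density of $F_{\lambda}$. For a general Galois covering the push-down functor need not be dense (see \cite{AS0}), and it is precisely the local support-finiteness of $\widehat{B}$ that rules this out and makes \cite[Proposition 2.5]{DS2} applicable; everything after the density statement is a formal consequence of Theorem 4.1 and of the fact that equivalences of module categories preserve almost split sequences, irreducible morphisms, and the Auslander-Reiten translation.
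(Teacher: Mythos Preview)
Your proposal is correct and takes essentially the same approach as the paper: the paper does not give a proof at all but simply records the theorem as a consequence of \cite[Proposition 2.5]{DS2} (and \cite[Theorem]{DS1}), which is exactly the reference you invoke for density. Your derivation of the second assertion from density together with Theorem 4.1 is a correct unpacking of what the paper leaves implicit.
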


Let A be a selfinjective algebra, $I$ an ideal of $A$, $B = A/I$ ,
and $e$ an idempotent of $A$ such that $e + I$ is the identity of
$B$. We may assume that $e = e_1 +...+e_n$, where $\{e_i; 1 \leq i
\leq n\}$ is a complete set of orthogonal primitive idempotents of
$A$ which are not contained in $I$. Then such an idempotent $e$ is
uniquely determined by $I$ up to an inner automorphism of $A$, and
we call it a {\it residual identity} of $B$ \cite{SY3}. Note that
$B \cong eAe/eI e$. For an ideal $I$ of a selfinjective algebra
$A$, we consider its {\it left annihilator} $\ell_A(I)=\{a \in A|
ax=0 \text{ for any   } x \in I \}$ and its {\it right
annihilator} $r_A(I )=\{a \in A| xa=0 \text{   for any   } x\in
I\}$. Following \cite[(2.1)]{SY3} the ideal $I$ is said to be {\it
deforming} if $eI e = \ell_{eAe}(I ) = r_{eAe}(I )$ and $A/I$ is
triangular. The following lemma was proved in \cite[Lemma
5.1]{SY2}.

\begin{lem} Let $A$ be a selfinjective algebra, $e$ an idempotent of $A$, and
assume that $\ell_A(I ) = Ie$ or $r_A(I ) = eI$. Then $e$ is a
residual identity of the quotient algebra $A/I$.
\end{lem}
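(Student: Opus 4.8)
The plan is to establish that $e$ is a residual identity of $B = A/I$ by verifying the defining conditions directly: namely, that $e + I$ is the identity of $B$ and that $e$ is a sum of a complete set of orthogonal primitive idempotents of $A$ not contained in $I$. The key structural input we are given is that $\ell_A(I) = Ie$ (the case $r_A(I) = eI$ being entirely dual, handled by the standard anti-isomorphism $A \to A^{\op}$, so I would treat only one case in detail and remark that the other follows symmetrically).

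First I would analyze the hypothesis $\ell_A(I) = Ie$. Note that $A(1-e) \subseteq \ell_A(I)$ is \emph{not} automatic, so the content of the hypothesis must be extracted carefully. What I really want is the complementary inclusion controlling where $I$ lives: since $Ie = \ell_A(I)$ and $I \cdot I \subseteq \ell_A(I)$ would force $I^2 \subseteq Ie$, but more to the point, I would argue that $(1-e)$ annihilates $I$ on the left in the appropriate sense. Concretely, decompose $1 = e + f$ with $f = 1 - e$. The claim $e + I = 1_B$ is equivalent to $f = 1 - e \in I$. To see this: $f A \cdot I$ — one checks $fI \subseteq \ell_A(I)\cap \ldots$; the cleanest route is to observe that since $A$ is selfinjective, for any ideal $J$ one has $r_A(\ell_A(J)) = J$ and $\ell_A(r_A(J)) = J$ (the double-annihilator property for self-injective, hence quasi-Frobenius, rings). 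Applying this with $J = I$ and using $\ell_A(I) = Ie$, we get $I = r_A(Ie) = \{a \in A : Iea = 0\}$. Then because $e$ is, by construction, the sum $e_1 + \dots + e_n$ of those primitive idempotents in a chosen complete orthogonal set that are not in $I$, the remaining primitive idempotents $e_{n+1}, \dots, e_N$ lie in $I$, whence $f = 1 - e = e_{n+1} + \dots + e_N \in I$, so indeed $e + I = 1_B$.

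Next I would verify the primitivity/completeness requirement built into the definition of residual identity: that $e$ equals a sum $e_1 + \dots + e_n$ of orthogonal primitive idempotents of $A$ none of which lies in $I$, and that adjoining further orthogonal primitive idempotents summing to $1-e$ gives a complete set, all of the added ones lying in $I$. This is now immediate from the previous paragraph's bookkeeping: we started from a fixed complete set $\{e_1,\dots,e_N\}$ of orthogonal primitive idempotents of $A$, relabeled so that $e_1,\dots,e_n \notin I$ and $e_{n+1},\dots,e_N \in I$, and set $e = e_1 + \dots + e_n$; the hypothesis via double annihilation pins down exactly that $1-e \in I$, which is the compatibility needed for $e+I$ to be the identity of $B = A/I$, and then the isomorphism $B \cong eAe/eIe$ follows formally (an element of $A/I$ has a representative $eae$ since $f \in I$, and $eae \in I \Leftrightarrow eae \in eIe$). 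Hence $e$ is a residual identity of $A/I$.

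The main obstacle I anticipate is justifying the double-annihilator identity $r_A(\ell_A(I)) = I$ in the precise setting at hand and making sure the hypothesis $\ell_A(I) = Ie$ is used, not circumvented. For a general selfinjective (quasi-Frobenius) finite-dimensional algebra this double-annihilator property is standard, but one must be careful that $I$ here is a two-sided ideal and that we are taking left/right annihilators within $A$; I would cite or recall that for a self-injective finite-dimensional $K$-algebra $A$ and any left ideal $J$, $\ell_A(r_A(J)) = J$, and dually, and then apply it to the left ideal $\ell_A(I)$. Everything else is elementary idempotent manipulation. I would close by remarking that the symmetric hypothesis $r_A(I) = eI$ is handled by passing to $A^{\op}$, under which left and right annihilators swap and the roles of the two conditions interchange, so no separate argument is needed.
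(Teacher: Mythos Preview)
The paper does not prove this lemma; it is imported from \cite[Lemma~5.1]{SY2}, so there is no in-paper argument to compare against. Your proposal, however, has a genuine gap.

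You correctly isolate the right tool (the double-annihilator identity $r_A(\ell_A(I)) = I$ for a two-sided ideal in a quasi-Frobenius ring) and correctly deduce $I = r_A(Ie)$ from the hypothesis $\ell_A(I) = Ie$. But then, instead of using this, you write that ``$e$ is, by construction, the sum $e_1 + \dots + e_n$ of those primitive idempotents in a chosen complete orthogonal set that are not in $I$'' and that the remaining primitive idempotents lie in $I$. There is no such construction available: in the lemma $e$ is an \emph{arbitrary} idempotent subject only to the annihilator condition. You are assuming precisely what must be shown (that $1-e \in I$, and that the primitive summands of $e$ avoid $I$). The follow-up paragraph compounds this by asserting ``we started from a fixed complete set $\{e_1,\dots,e_N\}$ \ldots\ and set $e = e_1 + \dots + e_n$'', which simply reverses the quantifiers of the statement.

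The fix is a one-line computation you overlooked. Having $I = r_A(Ie)$, observe that $Ie(1-e) = I\cdot e(1-e) = 0$ because $e$ is idempotent; hence $1-e \in r_A(Ie) = I$, so $e + I = 1_{A/I}$, which is exactly the defining condition for $e$ to be a residual identity of $A/I$. No decomposition into primitive idempotents is needed: the clause ``we may assume $e = e_1 + \dots + e_n$'' in the paper's recollection is a normalization remark, not part of the definition. The case $r_A(I) = eI$ is handled by the symmetric computation $(1-e)\cdot eI = 0$ together with $\ell_A(r_A(I)) = I$, or via $A^{\op}$ as you say.
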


Moreover, the following result was obtained in
\cite[Proposition 2.3]{SY3}.
\begin{prop} \label{prop 4.4}
Let $A$ be a selfinjective algebra, $I$ an ideal of $A$, $B =
A/I$, $e$ a residual identity of $B$, and assume that $IeI = 0$. Then
the following conditions are equivalent.
 \begin{enumerate}[\rm (i)]
 \item $Ie$ is an injective cogenerator in $\mod B$.
 \item $eI$ is an
injective cogenerator in $\mod B^{op}$.
\item $\ell_A(I ) = Ie$.
\item $r_A(I ) = eI$.
\end{enumerate}
 Moreover, under these equivalent conditions, we have $\soc A \subseteq I$ and $eI e =
\ell_{eAe}(I ) = r_{eAe}(I)$.
\end{prop}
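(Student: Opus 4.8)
\textbf{Proof strategy for Proposition \ref{prop 4.4}.}
The plan is to prove the chain of equivalences by first establishing (i) $\Leftrightarrow$ (iii) and (ii) $\Leftrightarrow$ (iv) via a duality argument, and then closing the loop with (iii) $\Leftrightarrow$ (iv). The main tool is the fact that for a selfinjective algebra $A$, the functors $\Hom_A(-, A) \cong \D(-)$ (more precisely, the Nakayama functor) relate left and right annihilators of ideals. First I would record the standing hypothesis $IeI = 0$ in the form that will actually be used: since $e$ is a residual identity, $I \supseteq \rad(A)e\,\rad(A) \cap \dots$ — actually the clean consequence is that multiplication by $I$ on the right kills $Ie$, i.e. $Ie \cdot I \subseteq IeI = 0$, so $Ie \subseteq r_A(I)$, and dually $eI \subseteq \ell_A(I)$. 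These two containments are the ``easy halves'' and I would dispatch them immediately.

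For (iii) $\Rightarrow$ (i): assuming $\ell_A(I) = Ie$, I would first note that $Ie$ is then naturally a $B = A/I$-module (it is annihilated on the left by $I$ since $Ie \subseteq \ell_A(I)$... wait, one needs it as a right $B$-module: $Ie \cdot I \subseteq IeI = 0$ gives that $Ie$ is a right $B$-module). The key point is a dimension/duality count: for a selfinjective algebra $A$, one has $\D(A/I) \cong \ell_A(I)$ as left $A$-modules and $\ell_A(I) \cong \D(A/I)$ — using that $A$ is selfinjective so $A \cong \D A$ as bimodules up to the Nakayama twist. Then $Ie = \ell_A(I)$ becomes, after applying $e$ and passing to $B$, identified with $\D(B)$ viewed appropriately, which is precisely the minimal injective cogenerator of $\mod B$. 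I expect the careful bookkeeping here — tracking which side the module structures live on, and the Nakayama twist — to be the main obstacle, and I would lean on \cite[Lemma 5.1]{SY2} (the Lemma just stated) to know that $e$ really is a residual identity under hypothesis (iii), so that $B \cong eAe/eIe$ and the restriction/corestriction functors between $\mod A$ and $\mod B$ behave well.

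For the reverse direction (i) $\Rightarrow$ (iii): if $Ie$ is an injective cogenerator in $\mod B$, then $\dim_K Ie = \dim_K \D B = \dim_K B$, while in general $\dim_K \ell_A(I) = \dim_K A - \dim_K I = \dim_K B$ for $A$ selfinjective (since $\dim \ell_A(I) + \dim I = \dim A$, as $\ell_A(-)$ and the exact pairing on the selfinjective algebra $A$ are ``perfect''). Combined with the already-established containment $Ie \subseteq \ell_A(I)$, equality of dimensions forces $Ie = \ell_A(I)$. The equivalence (ii) $\Leftrightarrow$ (iv) is obtained by applying the standard duality $\D: \mod A \to \mod A^{\op}$, which sends $I$ to an ideal of $A^{\op}$, interchanges left and right annihilators, and interchanges injective cogenerators of $\mod B$ with those of $\mod B^{\op}$; here one uses that $A^{\op}$ is again selfinjective and $IeI = 0$ is a left-right symmetric condition.

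Finally, for (iii) $\Leftrightarrow$ (iv) and the closing remarks: with both $Ie \subseteq r_A(I)$ and $eI \subseteq \ell_A(I)$ in hand, and the dimension identities $\dim r_A(I) = \dim \ell_A(I) = \dim B$, I would argue that (iii) holds iff $\dim Ie = \dim B$ iff (using $IeI=0$ to identify $Ie$ and $eI$ as ``transposes'' of each other under the Nakayama functor, so $\dim Ie = \dim eI$) $\dim eI = \dim B$ iff (iv) holds. For the last sentence of the proposition: under these conditions $\soc A = r_A(\rad A)$, and since $\rad A \supseteq$ (a suitable power involving $I$) — more directly, $\soc A \cdot I \subseteq \soc A \cdot \rad A = 0$... the cleanest route is $\soc A \subseteq r_A(\rad A) \subseteq r_A(I')$ for the relevant piece; I would instead use that $r_A(I) = eI$ means $\soc A$, being inside $r_A(\rad A)$ and hence inside $r_A$ of anything contained in $\rad A$, satisfies $\soc A \subseteq I$ precisely because otherwise $\soc A$ would have a nonzero component outside $I$ contradicting $r_A(I) \cap (1-e)A(\ldots)$. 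The identity $eIe = \ell_{eAe}(I) = r_{eAe}(I)$ then follows by multiplying the equalities $\ell_A(I) = Ie$ and $r_A(I) = eI$ on the appropriate side by $e$ and restricting to the corner ring $eAe$, using $\soc A \subseteq I$ to control the error terms.
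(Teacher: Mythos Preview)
The paper does not prove this proposition at all: it is quoted verbatim from \cite[Proposition 2.3]{SY3} and used as a black box, so there is no ``paper's own proof'' to compare against. Your strategy is therefore not being checked against anything here; the authors simply import the result.

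That said, your outline is broadly the right shape for how this result is actually established in the Skowro\'nski--Yamagata paper. A few remarks on the sketch itself. First, you have the easy containments swapped in your opening paragraph: $IeI = 0$ gives $(Ie)I = 0$, hence $Ie \subseteq \ell_A(I)$ (not $r_A(I)$), and $I(eI) = 0$ gives $eI \subseteq r_A(I)$; you silently correct this later, but it should be fixed. Second, the dimension argument for (i) $\Rightarrow$ (iii) is sound once you observe that an injective cogenerator of $\mod B$ must have dimension at least $\dim_K \D B = \dim_K B$, while $Ie \subseteq \ell_A(I)$ and $\dim_K \ell_A(I) = \dim_K A - \dim_K I = \dim_K B$ (using that $A_A$ is injective, so $\ell_A(I) \cong \Hom_A(A/I, A)$ has the same dimension as $\D(A/I)$) force equality. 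Third, the step (iii) $\Leftrightarrow$ (iv) via ``$\dim Ie = \dim eI$'' is the weakest link: this equality does not follow from $IeI = 0$ alone and needs the Nakayama permutation of the selfinjective algebra $A$ together with the fact that $e$ is residual (so that the Nakayama permutation restricts suitably); you gesture at this but do not nail it down. Finally, the argument for $\soc A \subseteq I$ is incomplete as written---the cleanest route is to note that once $r_A(I) = eI \subseteq I$, any simple summand $S$ of $\soc A$ satisfies $S \cdot \rad A = 0$, hence $S \cdot I = 0$, so $S \subseteq r_A(I) \subseteq I$.
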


We end this section with the criterion which is fundamental in the proof of the main theorem (see \cite[Section 3 and 4]{SY1} and \cite[Theorem 5.3]{SY2}).

\begin{thm} \label{SY}
Let $A$ be a selfinjective algebra over an algebraically closed field $K$. The following conditions are equivalent.
\begin{enumerate}[\rm (1)]
 \item $A$ is isomorphic to an orbit algebra $\widehat{B}/ (\varphi\nu_{\widehat{B}})$, where $B$ is an algebra over $K$ with acyclic quiver $Q_B$ and $\varphi$ is a positive automorphism of $\widehat{B}$.
 \item There is an ideal $I$ of $A$ and an idempotent $e$ of $A$ such that
   \begin{enumerate}[\rm (i)]
     \item $r_A(I)=eI,$
     \item the quiver $Q_{A/I}$ of $A/I$ is acyclic.
   \end{enumerate}
\end{enumerate}
Moreover, in this case, $B$ is isomorphic to $A/I$.
\end{thm}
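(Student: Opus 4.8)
\textbf{Plan for the proof of Theorem \ref{SY}.}
The implication $(1)\Rightarrow(2)$ is the routine direction: given $A\cong\widehat{B}/(\varphi\nu_{\widehat{B}})$ with $Q_B$ acyclic and $\varphi$ positive, one takes $G=(\varphi\nu_{\widehat{B}})$ and lets $F:\widehat{B}\to\widehat{B}/G=A$ be the Galois covering. The natural candidate for the ideal $I$ is the image under $F$ of the arrows of $\widehat{B}$ going ``upward'' between consecutive copies $B_m$ and $B_{m+1}$ (i.e.\ the part of $\widehat{B}$ coming from the $D(B)$-summands), and $e=F(e_0)$ where $e_0$ is the residual identity of one copy $B_0$ of $B$ inside $\widehat{B}$. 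Then $A/I\cong B$ has acyclic quiver, giving (ii), and the identity $r_A(I)=eI$ is checked by unwinding the $\widehat{B}$-$\widehat{B}$-bimodule structure of $D(B)$ together with positivity of $\varphi$, which guarantees that the copy of $B$ cut out by $e$ is indeed ``the bottom one'' so that the right annihilator of $I$ sits inside $eA$.

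For the substantial direction $(2)\Rightarrow(1)$, the plan is to reconstruct $\widehat{B}$ from $A$ using $I$ and $e$. First I would set $B=A/I$; by Lemma \ref{prop 4.4} (applied after checking $IeI=0$, which follows from $r_A(I)=eI$ by a short computation) we get that $eI$ is an injective cogenerator in $\mod B^{\mathrm{op}}$, that $\soc A\subseteq I$, and that $eIe=\ell_{eAe}(I)=r_{eAe}(I)$, so $I$ is a deforming ideal in the sense of \cite[(2.1)]{SY3} once we also invoke the acyclicity hypothesis (ii) to see $A/I$ is triangular. By Lemma \ref{lem} (the one preceding Proposition \ref{prop 4.4}, $r_A(I)=eI$ forces $e$ to be a residual identity of $B$), the setup of \cite{SY3} applies. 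The next step is to build, from the tower of powers of $I$ and the bimodule $D(B)\cong\ell_A(I)=Ie$, an identification of the ``infinite unrolling'' of $A$ along $I$ with the repetitive category $\widehat{B}$: concretely, one shows that the Galois covering of $A$ determined by the $I$-adic filtration has total space $\widehat{B}$ and deck group an infinite cyclic group generated by an automorphism of the form $\varphi\nu_{\widehat{B}}$, with $\varphi$ positive precisely because the single copy of $B$ is ``pushed up'' by the covering transformation and no arrow of $Q_B$ goes backward. This is exactly the content of \cite[Sections 3--4]{SY1} and \cite[Theorem 5.3]{SY2}, so the proof reduces to verifying that our hypotheses match theirs.

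The main obstacle is the construction of the covering and the verification that the deck transformation has the normalized form $\varphi\nu_{\widehat{B}}$ with $\varphi$ \emph{positive} rather than merely an arbitrary automorphism commuting with $\nu_{\widehat{B}}$: one must show that the acyclicity of $Q_{A/I}$ propagates to a compatible ordering of the objects of $\widehat{B}$ so that $\varphi$ cannot decrease the ``level'' $m$ of any object $e_{m,i}$. The key technical input is the bimodule isomorphism $Ie\cong D(B)$ as $B$-$B$-bimodules, which comes from Proposition \ref{prop 4.4}(iii) together with selfinjectivity of $A$; once this is in hand, the morphism spaces $\widehat{B}(e_{m,i},e_{r,j})$ are matched with the appropriate graded pieces of $A$ under the $I$-adic filtration, and the density/isomorphism of Auslander--Reiten structures is not needed here — only the algebra isomorphism. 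I would therefore organize the write-up as: (a) $IeI=0$ and the deforming property; (b) $Ie\cong D(B)$ as bimodules; (c) assembling $\widehat{B}$ and the covering $F:\widehat{B}\to A$; (d) identifying the deck group as $(\varphi\nu_{\widehat{B}})$ with $\varphi$ positive; and cite \cite{SY1,SY2,SY3} for the parts that are literally proved there, indicating precisely which lemma of those papers covers each step.
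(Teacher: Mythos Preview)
The paper does not prove this theorem at all: it is stated as a known result and attributed to \cite[Sections 3 and 4]{SY1} and \cite[Theorem 5.3]{SY2}, with no argument given in the present paper. Your proposal is a reasonable outline of the strategy actually carried out in those references (deforming ideal, the bimodule identification $Ie\cong D(B)$, reconstruction of $\widehat{B}$ as a covering, and the positivity of the deck transformation), and you correctly identify that the substantive work lives in \cite{SY1,SY2,SY3}; but for the purposes of this paper the appropriate ``proof'' is simply the citation, and your sketch goes beyond what is required here.
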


\section{Selfinjective algebras of strictly canonical type}
The aim of this section is to introduce some results on
selfinjective algebras of strictly canonical type. In particular,
we give an answer to the question when the canonical family of
quasi-tubes of such an algebra is generalized standard.

The following results were established in \cite[Theorem 5.1]{KS1}.

\begin{thm} \label{5.1KS}
Let $B$ be a branch extension (respectively, branch coextension)
of a canonical algebra $C$.  Then there exist algebras $C_q$,
$B^-_q$, $B^+_q$, $B^{\ast}_q$ and $\overline{B}_q$,
$q\in\mathbb{Z}$, and a decomposition
$$\Gamma_{\widehat{B}}=\vee_{q\in\mathbb{Z}}(\mathcal{X}_q\vee \mathcal{C}_q)$$
of the Auslander-Reiten quiver $\Gamma_{\widehat{B}}$ of
$\widehat{B}$ such that  the following statements hold:
\begin{enumerate}[\rm (a)]
\item for each  $q\in\mathbb{Z}$, $\mathcal{X}_q$ is a family of
components of $\Gamma_{\widehat{B}}$ containing exactly one simple
$\widehat{B}$-module $S_q$;
\item for each  $q\in\mathbb{Z}$, $\mathcal{C}_q$ is a family
 $(\mathcal{C}_q(\lambda))_{\lambda\in\mathbb{P}_1(K)}$ of pairwise orthogonal standard
 quasi-tubes of $\Gamma_{\widehat{B}}$ with $s(\mathcal{C}_q(\lambda))+p(\mathcal{C}_q(\lambda))=r(\mathcal{C}_q(\lambda))-1$,
 for any $\lambda\in\mathbb{P}_1(K)$;
\item for each  pair $p,\,q\in\mathbb{Z}$ with $p<q$,
 we have $\mbox{Hom}\,_{\widehat{B}}(\mathcal{X}_q,\mathcal{X}_p\vee \mathcal{C}_p)=0$
 and $\mbox{Hom}\,_{\widehat{B}}(\mathcal{C}_q,\mathcal{X}_p\vee \mathcal{C}_p\vee\mathcal{X}_{p+1})=0$;
\item for each  $q\in\mathbb{Z}$, $C_q$ is a canonical algebra,
 $B^-_q$ is a maximal branch coextension of $C_q$ in $B^{\ast}_q$,
 $B^+_q$ is a maximal branch extension of $C_q$ in $B^{\ast}_q$,
 and $B^{\ast}_q$ is a quasi-tube enlargement of $C_q$; moreover,
 $C_q$, $B^-_q$, $B^+_q$, $B^{\ast}_q$ and $\overline{B}_q$ are
full convex subcategories of
  $\widehat{B}$ with $\widehat{B^-_q}=\widehat{B}=\widehat{B^+_q}$, $\nu_{\widehat{B}}(C_q)=C_{q+2}$, $\nu_{\widehat{B}}(B^-_q)=B^-_{q+2}$,
  $\nu_{\widehat{B}}(B^+_q)=B^+_{q+2}$, $\nu_{\widehat{B}}(B^{\ast}_q)=B^{\ast}_{q+2}$, $\nu_{\widehat{B}}(\overline{B}_q)=\overline{B}_{q+2}$.
\item for each  $q\in\mathbb{Z}$, $\mathcal{C}_q$ is the canonical $\mathbb{P}_1(K)$-family
 of quasi-tubes of $\Gamma_{B^{\ast}_q}$, obtained from the canonical $\mathbb{P}_1(K)$-family
 $\mathcal{T}^-_q$ of coray tubes of $\Gamma_{B^-_q}$ by infinite rectangle insertions,
 and from the canonical $\mathbb{P}_1(K)$-family $\mathcal{T}^+_q$ of ray tubes
 of $\Gamma_{B^+_q}$ by infinite rectangle coinsertions;
\item for each $q \in \mathbb{Z}$, $\mathcal{X}_q$ consists of
indecomposable $\overline{B}_q$-modules;
\item for each  $q\in\mathbb{Z}$, we have
 $\nu_{\widehat{B}}(\mathcal{X}_q)=\mathcal{X}_{q+2}$ and
 $\nu_{\widehat{B}}(\mathcal{C}_q)=\mathcal{C}_{q+2}$;
\item for each  $q\in\mathbb{Z}$, $\mbox{Hom}\,_{\widehat{B}}(S_q,\mathcal{C}_q(\lambda))\neq 0$
 for all $\lambda\in\mathbb{P}_1(K)$, and $\mbox{Hom}\,_{\widehat{B}}(S_p,\mathcal{C}_q)= 0$
 for $p\neq q$ in $\mathbb{Z}$;
\item for each  $q\in\mathbb{Z}$, $\mbox{Hom}\,_{\widehat{B}}(\mathcal{C}_q(\lambda),S_{q+1})\neq 0$
 for all $\lambda\in\mathbb{P}_1(K)$, and $\mbox{Hom}\,_{\widehat{B}}(\mathcal{C}_q,S_p)= 0$
 for $p\neq q+1$ in $\mathbb{Z}$;
\item for each  $q\in\mathbb{Z}$, we have $\Omega_{\widehat{B}}(\mathcal{C}^s_{q+1})=\mathcal{C}^s_q$
 and $\Omega_{\widehat{B}}(\mathcal{X}^s_{q+1})=\mathcal{X}^s_q$.
\item $\widehat{B}$ is locally support-finite.
\end{enumerate}
\end{thm}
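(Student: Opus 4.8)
The plan is to reduce the whole statement to a careful analysis of the bounded derived category and the known shape of the Auslander–Reiten quiver of a branch extension (or coextension) of a canonical algebra, and then to transport this structure around the repetitive category $\widehat{B}$ using the Nakayama automorphism. First I would recall from Section~3 that for a branch extension $B$ of a canonical algebra $C$ one has the decomposition $\Gamma_B=\mathcal{P}^B\vee\mathcal{T}^B\vee\mathcal{Q}^B$, with $\mathcal{P}^B=\mathcal{P}^C$, $\mathcal{T}^B$ the $\mathbb{P}_1(K)$-family of pairwise orthogonal standard ray tubes separating $\mathcal{P}^B$ from $\mathcal{Q}^C$, and $\mathcal{Q}^B$ containing all indecomposable injective $B$-modules but no projective $B$-module (and dually for coextensions). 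Since $B$ has global dimension at most two, the stable module category $\underline{\mod}\,\widehat{B}$ is triangle-equivalent to $\D^b(\mod B)$, and under this equivalence the suspension $\Omega_{\widehat{B}}^{-1}$ corresponds to the shift $[1]$. I would use this to identify the indecomposable $\widehat{B}$-modules with the indecomposable objects of $\D^b(\mod B)$, which are the stalk complexes $M[i]$, $M\in\ind B$, $i\in\mathbb{Z}$; the key point is that within each shift copy the components $\mathcal{P}^B[i]$, $\mathcal{T}^B[i]$, $\mathcal{Q}^B[i]$ reassemble — because $\mathcal{Q}^B[i]$ and $\mathcal{P}^B[i+1]$ glue through the injective/projective $\widehat{B}$-modules of the slice $B_{i+1}$ — into the family $\mathcal{X}_q\vee\mathcal{C}_q$ indexed by $q\in\mathbb{Z}$, where the index $q$ is twice the shift plus a parity bookkeeping coming from the fact that $\nu_{\widehat{B}}=\Omega_{\widehat{B}}^{-2}$ up to the structural shift.

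With this dictionary in place, most of the assertions become translations of facts already proved in \cite{KS1} for the module category of branch enlargements of canonical algebras and of the general theory of quasi-tube enlargements (Proposition~\ref{prop3.?}, Proposition~\ref{prop A}). Concretely: (a) and (f) record that each $\mathcal{X}_q$ is the part of $\Gamma_{\widehat{B}}$ coming from the postprojective/preinjective components of a single shift copy of $\overline{B}_q$ and contains exactly the one simple $\widehat{B}$-module $S_q$ sitting at the ``extremal'' vertex; (b) and (e) are the statement that each $\mathcal{C}_q$ is the canonical $\mathbb{P}_1(K)$-family of quasi-tubes of the quasi-tube enlargement $B^{\ast}_q$, obtained from the ray tubes $\mathcal{T}^+_q$ of $B^+_q$ by infinite rectangle coinsertions and from the coray tubes $\mathcal{T}^-_q$ of $B^-_q$ by infinite rectangle insertions — here I would invoke Proposition~\ref{prop A} to see that tubular (co)extensions coincide with branch (co)extensions, and the equality $s(\mathcal{C}_q(\lambda))+p(\mathcal{C}_q(\lambda))=r(\mathcal{C}_q(\lambda))-1$ from the combinatorics of rectangle insertions; (d) is the construction of $C_q$, $B^{\pm}_q$, $B^{\ast}_q$, $\overline{B}_q$ as the full convex subcategories of $\widehat{B}$ supporting these families, together with $\widehat{B^{\pm}_q}=\widehat{B}$ (the repetitive category is unchanged under passing to a branch (co)extension, by \cite{AST}), and the $\nu_{\widehat{B}}$-shift relations $C_{q+2}=\nu_{\widehat{B}}(C_q)$ etc., which are immediate once one knows $\nu_{\widehat{B}}$ acts as a shift by $2$ on the index $q$. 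Then (g), (c), (h), (i), (j) are the Hom-vanishing and orbit relations: (g) is $\nu_{\widehat{B}}(\mathcal{X}_q\vee\mathcal{C}_q)=\mathcal{X}_{q+2}\vee\mathcal{C}_{q+2}$, which follows from the same shift; (c), (h) and (i) are the separation properties of the canonical family — that $\mathcal{C}_q$ separates $\mathcal{X}_q$ (hence $S_q$) from $\mathcal{X}_{q+1}$ (hence $S_{q+1}$) and that there are no backward morphisms — which I would deduce from the separation property of $\mathcal{T}^C$ in $\Gamma_C$ together with the derived-category description (morphisms strictly decreasing the shift vanish because $B$ has finite global dimension and $\mathcal{P},\mathcal{T},\mathcal{Q}$ are separating); and (j), $\Omega_{\widehat{B}}(\mathcal{C}^s_{q+1})=\mathcal{C}^s_q$ and $\Omega_{\widehat{B}}(\mathcal{X}^s_{q+1})=\mathcal{X}^s_q$, is the statement that $\Omega_{\widehat{B}}$ shifts $q$ by $-1$ on stable parts, again a direct consequence of $\Omega_{\widehat{B}}=[-1]$ on $\underline{\mod}\,\widehat{B}\simeq\D^b(\mod B)$ and the gluing of $\mathcal{Q}^B[i]$ with $\mathcal{P}^B[i+1]$.

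The genuinely new and hardest point is assertion (k): that $\widehat{B}$ is locally support-finite. This does not follow formally from the derived-category picture, because a priori an indecomposable $\widehat{B}$-module could have support spread across arbitrarily many slices $B_i$. The plan here is to use the explicit component structure established in (a)--(j): every indecomposable $\widehat{B}$-module lies in some $\mathcal{X}_q$ or some $\mathcal{C}_q$, and in either case its support is contained in the finite convex subcategory $\overline{B}_q$ (for $\mathcal{X}_q$, by (f)) respectively $B^{\ast}_q$ (for $\mathcal{C}_q$, since $\mathcal{C}_q$ is by (e) the AR-quiver family of the finite algebra $B^{\ast}_q$ and standard quasi-tubes of a finite algebra consist of modules supported there). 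Fix an object $x=e_{m,i}$ of $\widehat{B}$. An indecomposable $M$ with $M(x)\neq 0$ must lie in one of the finitely many families $\mathcal{X}_q$, $\mathcal{C}_q$ whose supporting subcategory contains $x$; but $\overline{B}_q$ and $B^{\ast}_q$ only meet the slice of $x$ for $q$ in a bounded window (by the convexity and the $\nu_{\widehat{B}}$-periodicity $C_{q+2}=\nu_{\widehat{B}}(C_q)$, the subcategories $\overline{B}_q$ move one slice forward as $q$ increases by $2$, so only finitely many contain a fixed object). Hence $\mathrm{supp}\,M$ ranges over subcategories of a fixed finite convex subcategory of $\widehat{B}$, which is exactly local support-finiteness. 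I expect the main obstacle to be making the bound in this last paragraph uniform and precise — i.e.\ controlling exactly how far the ``tails'' of the quasi-tubes in $\mathcal{C}_q$ (the rectangle (co)insertions, which a priori extend infinitely) reach into neighbouring slices — and for this I would lean on the fact, from \cite[Section~2]{AST} and the construction in (e), that each rectangle (co)insertion is performed with modules from a single standard ray or coray tube of the finite algebra $B^{\pm}_q$, so the enlarged quasi-tube still consists of $B^{\ast}_q$-modules and no support leakage occurs.
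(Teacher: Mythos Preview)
The paper does not contain a proof of this theorem. It is introduced by the sentence ``The following results were established in \cite[Theorem 5.1]{KS1}'' and is quoted verbatim from that reference; no argument is given here. There is therefore nothing in the present paper to compare your proposal against --- the actual proof lives entirely in the cited article of Kwiecie\'n and Skowro\'nski.

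For what it is worth, your derived-category strategy via Happel's equivalence $\underline{\mod}\,\widehat{B}\simeq \D^b(\mod B)$ is a reasonable high-level plan, and the proof in \cite{KS1} does exploit the explicit module-theoretic structure of branch (co)extensions of canonical algebras together with the $\nu_{\widehat{B}}$-periodicity of $\widehat{B}$. Two places where your outline would need tightening if you were to carry it out: the identification of indecomposable $\widehat{B}$-modules with stalk complexes holds only on the \emph{stable} category, so the projective-injective modules --- which are exactly what turn the stable tubes into quasi-tubes and carry the content of (b) and (e) --- have to be reinserted and tracked separately; and the relation you invoke between $\nu_{\widehat{B}}$ and $\Omega_{\widehat{B}}$ (``$\nu_{\widehat{B}}=\Omega_{\widehat{B}}^{-2}$ up to the structural shift'') is not quite the correct formula and the index-by-$2$ shift really comes from the concrete gluing of $\mathcal{Q}^{B^+_q}$ to $\mathcal{P}^{B^-_{q+1}}$ across consecutive copies in $\widehat{B}$, which is the substantive combinatorial work in \cite{KS1}.
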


Moreover, there is the following description of torsion-free
admissible groups of automorphisms of $\widehat{B}$
\cite[Proposition 5.2]{KS1}.

\begin{prop} \label{5.2}
Let $B$ be a branch extension (respectively, branch coextension)
of a canonical algebra $C$.  Then there exists a strictly positive
automorphism $\varrho_{\widehat{B}}$ of $\widehat{B}$ such that
the following statements hold:
\begin{enumerate}[\rm (i)]
\item $\varrho_{\widehat{B}}=\nu_{\widehat{B}}$ or $\varrho_{\widehat{B}}^2=\nu_{\widehat{B}}$;
\item every torsion-free admissible group $G$ of  automorphisms of $\widehat{B}$ is an
 infinite cyclic group generated by a strictly positive automorphism $\sigma\varrho_{\widehat{B}}^s$,
 for some integer $s\geq 1$ and some rigid automorphism $\sigma$ of $\widehat{B}$.
\end{enumerate}
\end{prop}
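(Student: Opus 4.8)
The strategy is to control the group $\mathrm{Aut}_K(\widehat{B})$ through its action on the $\mathbb{Z}$-indexed decomposition $\Gamma_{\widehat{B}}=\vee_{q\in\mathbb{Z}}(\mathcal{X}_q\vee\mathcal{C}_q)$ of Theorem~\ref{5.1KS}. First I would note that a $K$-linear automorphism $\varphi$ of $\widehat{B}$ induces a covariant self-equivalence of $\mod\widehat{B}$, hence an automorphism of the translation quiver $\Gamma_{\widehat{B}}$ permuting its components; since containing an oriented cycle is a self-equivalence invariant, $\varphi$ maps quasi-tubes to quasi-tubes and the exceptional components $\mathcal{X}_q$ among themselves. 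By Theorem~\ref{5.1KS}(c),(h),(i) the level of a component (the $q$ with the component in $\mathcal{X}_q\vee\mathcal{C}_q$) is read off from the directed $\Hom$-pattern $\cdots\to\mathcal{X}_q\to\mathcal{C}_q\to\mathcal{X}_{q+1}\to\cdots$, which $\varphi$ respects, and this chain cannot be reversed since $\Hom_{\widehat{B}}(\mathcal{C}_q,S_{q+1})\neq 0$ while $\Hom_{\widehat{B}}(\mathcal{C}_q,S_{q-1})=0$. Hence $\varphi$ shifts levels by a fixed integer $d(\varphi)$, and $d\colon\mathrm{Aut}_K(\widehat{B})\to\mathbb{Z}$ is a group homomorphism with $d(\nu_{\widehat{B}})=2$ by Theorem~\ref{5.1KS}(g). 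Moreover, $d(\varphi)=0$ forces $\varphi$ to fix every $\mathcal{C}_q$, hence (Theorem~\ref{5.1KS}(e)) to preserve every convex subcategory $B^{\ast}_q$, hence the layer filtration of $\widehat{B}$, so $\ker d$ is precisely the group of rigid automorphisms of $\widehat{B}$.

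Since $\mathrm{im}\,d\subseteq\mathbb{Z}$ contains $2$, it is $2\mathbb{Z}$ or $\mathbb{Z}$. If $\mathrm{im}\,d=2\mathbb{Z}$, take $\varrho_{\widehat{B}}:=\nu_{\widehat{B}}$, which is strictly positive; then (i) holds. If $\mathrm{im}\,d=\mathbb{Z}$, pick $\psi\in\mathrm{Aut}_K(\widehat{B})$ with $d(\psi)=1$; then $\psi$ carries the level-$q$ data $(C_q,B^{-}_q,B^{+}_q,B^{\ast}_q)$ to the level-$(q+1)$ data, so $\nu_{\widehat{B}}^{-1}\psi^2\in\ker d$ is rigid, and I look for a rigid $\rho$ with $(\rho\psi)^2=\nu_{\widehat{B}}$. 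Writing $\rho\psi\rho\psi=\rho\cdot(\psi\rho\psi^{-1})\cdot\psi^2$, this amounts to solving the twisted equation $\rho\cdot(\psi\rho\psi^{-1})=\nu_{\widehat{B}}\psi^{-2}$ inside the group $N=\ker d$ of rigid automorphisms. I expect this to be the main obstacle. I would resolve it using the explicit description of $\widehat{B}$ for a branch (co)extension of a canonical algebra (Theorem~\ref{5.1KS}(d),(e)): $N$ is governed by the arrow-rescaling automorphisms of the canonical algebras $C_q$ — an algebraic torus over the algebraically closed field $K$, hence divisible, on which square-root-type equations are solvable — together with finite combinatorial symmetries of the $C_q$-quivers and of the attached branches, which are absorbed by choosing $\psi$ compatibly with them. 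Setting $\varrho_{\widehat{B}}:=\rho\psi$ then gives $d(\varrho_{\widehat{B}})=1$ and $\varrho_{\widehat{B}}^2=\nu_{\widehat{B}}$; as $\varrho_{\widehat{B}}^2$ is strictly positive and $\varrho_{\widehat{B}}$ is not rigid, $\varrho_{\widehat{B}}$ is strictly positive, which gives (i).

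For (ii), let $G$ be a torsion-free admissible group of automorphisms of $\widehat{B}$. Acting freely with finitely many orbits on the infinite set $\widehat{\mathcal{E}}_B$, $G$ is infinite. If $\sigma\in G$ were rigid and $\sigma\neq\mathrm{id}$, then $\sigma$ would induce a fixed-point-free permutation of the $n$ objects of each layer $B_m$, so $\sigma^N$ would fix every object for some $N\le n!$; then $\sigma^N=\mathrm{id}$ contradicts torsion-freeness and $\sigma^N\neq\mathrm{id}$ contradicts freeness of the action. Hence $\ker(d|_G)=\{\mathrm{id}\}$, so $d|_G$ embeds $G$ into $\mathbb{Z}$ as an infinite subgroup $c\mathbb{Z}$ with $c\ge 1$, and $G=\langle g\rangle$ with $d(g)=c$ (replacing $g$ by $g^{-1}$ if needed). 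If $\varrho_{\widehat{B}}^2=\nu_{\widehat{B}}$, put $s:=c$ and $\sigma:=g\varrho_{\widehat{B}}^{-c}$; if $\varrho_{\widehat{B}}=\nu_{\widehat{B}}$, then $\mathrm{im}\,d=2\mathbb{Z}$ forces $c=2s$ even, and put $\sigma:=g\nu_{\widehat{B}}^{-s}$. In both cases $d(\sigma)=0$, so $\sigma$ is rigid by the first paragraph, and $g=\sigma\varrho_{\widehat{B}}^{s}$ with $s\ge 1$; since $\varrho_{\widehat{B}}$ is strictly positive and $\sigma$ is rigid, $g=\sigma\varrho_{\widehat{B}}^{s}$ is strictly positive. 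Thus $G$ is an infinite cyclic group generated by the strictly positive automorphism $\sigma\varrho_{\widehat{B}}^{s}$, which is the asserted description.
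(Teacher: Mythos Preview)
The paper does not prove Proposition~\ref{5.2}; it is quoted verbatim from \cite[Proposition~5.2]{KS1}, so there is no in-paper argument to compare against. Your proposal is therefore an attempt at an independent proof, and the overall architecture---defining a level homomorphism $d\colon \mathrm{Aut}_K(\widehat{B})\to\mathbb{Z}$ via the decomposition of Theorem~\ref{5.1KS}, identifying $\ker d$ with the rigid automorphisms, and then reading off (ii) from the embedding $G\hookrightarrow\mathbb{Z}$---is sound and is essentially how the result is obtained in \cite{KS1}. Your treatment of (ii) is clean: the dichotomy ``$\sigma^{N}=\mathrm{id}$ contradicts torsion-freeness, $\sigma^{N}\neq\mathrm{id}$ contradicts freeness of the action'' is exactly the right way to exclude non-trivial rigid elements from $G$.

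There are, however, two genuine gaps in (i). First, the identification $\ker d=\{\text{rigid automorphisms}\}$ is asserted but not established. You argue that $d(\varphi)=0$ forces $\varphi$ to preserve each $B^{\ast}_q$; but ``rigid'' is defined by the action on the object layers $B_m$, and the $B^{\ast}_q$ are strictly larger convex subcategories overlapping several consecutive $B_m$'s. One must show that the intersection pattern of the $B^{\ast}_q$ (or of the supports of $\mathcal{C}_q$, $\mathcal{X}_q$) pins down each $B_m$ individually, which requires the explicit description of these subcategories in \cite{KS1} rather than just the statements collected in Theorem~\ref{5.1KS}. Second---and this is the part you yourself flag---the construction of a square root $\varrho_{\widehat{B}}$ of $\nu_{\widehat{B}}$ when $\mathrm{im}\,d=\mathbb{Z}$ is only sketched. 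The twisted equation $\rho\cdot(\psi\rho\psi^{-1})=\nu_{\widehat{B}}\psi^{-2}$ in the rigid group is not solvable by abstract divisibility alone: the rigid group is an extension of a finite permutation group by a torus, and conjugation by $\psi$ mixes these pieces. In \cite{KS1} this is handled by an explicit construction of $\varrho_{\widehat{B}}$ from the combinatorics of the branch (co)extension, exploiting that $\widehat{B^{-}}=\widehat{B}=\widehat{B^{+}}$ and that the extension and coextension branches are interchanged under a half-shift; you would need to reproduce that construction rather than appeal to generic solvability.
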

Preserving the above notation, for a canonical algebra $C$, it
follows from Proposition \ref{prop3.?} that $\Gamma_B$ contains a
generalized standard $\mathbb{P}_1(K)$-family $\mathcal{C}$ of
quasi-tubes. We consider $\mathcal{C}=\mathcal{C}_0$ as a family
of components of $\Gamma_{\widehat{B}}$.

Recall that, following \cite{KS1}, a selfinjective algebra $A$ of
the form $\widehat{B}/G$, where $B$ is a branch extension
(equivalently, branch coextension) of a canonical algebra $C$ and
$G$ is an infinite cyclic group generated by a strictly positive
automorphism of $\widehat{B}$, is called a \textit{selfinjective
algebra of strictly canonical type}. The structure and homological
properties of the Auslander-Reiten quivers  of selfinjective
algebras of strictly canonical type were described in
\cite[Theorem 5.3]{KS1}. In particular, for a selfinjective
algebra $A$ of strictly canonical type its Auslander-Reiten quiver
has a decomposition
$$\Gamma_A=\bigvee_{q\in\mathbb{Z}/n\mathbb{Z}}(\mathcal{X}^A_q\vee \mathcal{C}^A_q),$$
for some positive integer $n$, and, for each
$q\in\mathbb{Z}/n\mathbb{Z}$,
$\mathcal{C}^A_q=(\mathcal{C}^A_q(\lambda))_{\lambda\in
\mathbb{P}_1(K)}$ is a $\mathbb{P}_1(K)$-family of quasi-tubes
with $s(\mathcal{C}^A_q(\lambda))+p(\mathcal{C}^A_q(\lambda))=
r(\mathcal{C}^A_q(\lambda))-1$ for each $\lambda\in
\mathbb{P}_1(K)$, and $\mathcal{X}^A_q$ is a family of components
containing exactly one simple module $S_q$. Moreover, we have the
following proposition which is an immediate consequence of
properties of push-down functor $F_{\lambda}: \mod
\widehat{B}\rightarrow \mod A$.

\begin{prop}\label{prop1}
Let $A=\widehat{B}/G$ be a selfinjective algebra of strictly
canonical type. Then $\mathcal{C}^A_0=F_{\lambda}(\mathcal{C}_0)$
is a $\mathbb{P}_1(K)$-family of quasi-tubes in $\Gamma_A$
maximally saturated by simple and projective modules.
\end{prop}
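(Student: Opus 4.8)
The plan is to transfer the three defining conditions (MS1)--(MS3) of a maximally saturated family from the repetitive category $\widehat{B}$ to the orbit algebra $A=\widehat{B}/G$ via the push-down functor $F_{\lambda}\colon\mod\widehat{B}\to\mod A$, using the structural description recalled in Theorem \ref{5.1KS}. First I would fix the family $\mathcal{C}_0=(\mathcal{C}_0(\lambda))_{\lambda\in\mathbb{P}_1(K)}$ of quasi-tubes of $\Gamma_{\widehat{B}}$ coming from the canonical $\mathbb{P}_1(K)$-family of the quasi-tube enlargement $B^{\ast}_0$, together with the simple modules $S_0$ and $S_1$ sitting in the neighbouring families $\mathcal{X}_0$ and $\mathcal{X}_1$. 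Since $\widehat{B}$ is locally support-finite by part (k) of Theorem \ref{5.1KS}, and $G$ is torsion-free admissible, the push-down functor is dense and preserves Auslander-Reiten sequences (Theorems referenced in Section~4); hence $\mathcal{C}^A_0=F_{\lambda}(\mathcal{C}_0)$ is genuinely a $\mathbb{P}_1(K)$-family of quasi-tubes of $\Gamma_A$, and $T:=F_{\lambda}(S_0)$, $T':=F_{\lambda}(S_1)$ are simple $A$-modules, which by properties (h), (i) are not in $\mathcal{C}^A_0$ (the relevant $\mathrm{Hom}$-spaces in $\Gamma_{\widehat{B}}$ are nonzero exactly on $\mathcal{C}_0$, and restriction/push-down respects this; distinctness follows because $G$ acts freely and $\mathcal{X}_0\neq\mathcal{X}_1$ lie in different $G$-orbits under a strictly positive generator).

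Next I would verify (MS1). By part (b) of Theorem \ref{5.1KS} we already have $s(\mathcal{C}_0(\lambda))+p(\mathcal{C}_0(\lambda))=r(\mathcal{C}_0(\lambda))-1$ for each $\lambda$ at the level of $\widehat{B}$. Because $F_{\lambda}$ is a covering functor that is dense here and sends the orbit translation quiver $\Gamma_{\widehat{B}}/G$ isomorphically onto $\Gamma_A$, each quasi-tube $\mathcal{C}^A_0(\lambda)$ of $\Gamma_A$ is the image of a single $G$-orbit of quasi-tubes $\{g\,\mathcal{C}_0(\lambda)\}_{g\in G}$ in $\Gamma_{\widehat{B}}$ (here one uses that the strictly positive generator of $G$ does not fix any $\mathcal{C}_0(\lambda)$, so the restriction of $F_{\lambda}$ to a single $\mathcal{C}_0(\lambda)$ is already a translation-quiver isomorphism onto its image); the rank, the number of simple modules, and the number of projective modules are preserved under such an isomorphism, since simplicity, projectivity and the rank of the stable part are local combinatorial-homological invariants transported by $F_{\lambda}$. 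This yields $s(\mathcal{C}^A_0(\lambda))+p(\mathcal{C}^A_0(\lambda))=r(\mathcal{C}^A_0(\lambda))-1$.

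For (MS2) I would argue on composition factors: an indecomposable $X$ in $\mathcal{C}^A_0$ lifts to an indecomposable $\widetilde X$ in some $g\,\mathcal{C}_0$, and the simple composition factors of $X$ are the $F_{\lambda}$-images of those of $\widetilde X$; by part (e), $\mathcal{C}_0$ is the canonical $\mathbb{P}_1(K)$-family of the quasi-tube enlargement $B^{\ast}_0$ obtained from $C_0$ by rectangle insertions and coinsertions, so the composition factors of its indecomposables are exactly the simples inside $\mathcal{C}_0$, the tops and socles of the projective-injectives inserted, plus the two "boundary" simples $S_0$ and $S_1$ reached along the separation (parts (h),(i)) — push this picture down. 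For (MS3) I would use that $\mathcal{C}_0$ contains, in every tube $\mathcal{C}_0(\lambda)$, an indecomposable $E_{\lambda}$ with socle $S_0$ and top $S_1$ (this is exactly where the nonzero $\mathrm{Hom}$'s $\mathrm{Hom}_{\widehat{B}}(S_0,\mathcal{C}_0(\lambda))\neq0$ and $\mathrm{Hom}_{\widehat{B}}(\mathcal{C}_0(\lambda),S_1)\neq0$ from (h),(i) are built from a single module of the canonical family of $B^{\ast}_0$); applying $F_{\lambda}$ gives indecomposables $E^A_{\lambda}$ in $\mathcal{C}^A_0(\lambda)$ with $\soc E^A_{\lambda}=T$, $\top E^A_{\lambda}=T'$, and conversely any quasi-tube of $\Gamma_A$ carrying such a module lifts to one in $\mathcal{C}_0$ by density and the $\mathrm{Hom}$-orthogonality in part (h),(i), so $\mathcal{C}^A_0$ is the complete such family. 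The main obstacle, I expect, is the bookkeeping for (MS3) and the completeness clause: one must rule out that some quasi-tube outside $F_{\lambda}(\mathcal{C}_0)$ — a priori in some $\mathcal{C}^A_q$ with $q\neq0$, or inside an $\mathcal{X}^A_q$-type family — also admits an indecomposable with socle $T$ and top $T'$; this is handled by the $\mathrm{Hom}$-vanishing between the $\mathcal{C}_q$'s and the wrong simples (parts (c),(h),(i) of Theorem \ref{5.1KS}) together with the fact that $G$-orbits of the $S_q$ collapse to only $n$ simples in $\mod A$, but making the indexing match up cleanly is the delicate point.
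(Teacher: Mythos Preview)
Your proposal is correct and follows exactly the approach the paper intends: the paper states the proposition ``is an immediate consequence of properties of the push-down functor $F_{\lambda}\colon\mod\widehat{B}\to\mod A$'' and gives no further argument, so what you have written is a careful unpacking of that one-line remark via Theorem~\ref{5.1KS} and the density and Auslander--Reiten preserving properties of $F_{\lambda}$ (Theorems~4.1 and~4.2). Your identification of the completeness clause in (MS3) as the only point requiring care is accurate, and your sketch of how parts (c), (h), (i) of Theorem~\ref{5.1KS} handle it is the right one; the paper simply leaves all of this implicit.
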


We are now in a position to prove the following equivalence.

\begin{prop} \label{prop2}
Let $A=\widehat{B}/G$, where $B$ is a branch extension
(respectively, coextension) of a canonical algebra $C$ with
respect to the canonical $\mathbb{P}_1(K)$-family of stable tubes
and $G$ an admissible group of automorphisms of $\widehat{B}$
generated by a strictly positive automorphism of $\widehat{B}$.
Then the following statements are equivalent.
\begin{enumerate}[\rm (i)]
\item The canonical $\mathbb{P}_1(K)$-family $\mathcal{C}_0^A
=F_{\lambda}(\mathcal{C}_0)$ of quasi-tubes of $\Gamma_A$ is
generalized standard.
\item $G=(\varphi \nu_{\widehat{B}})$, where
$\varphi$ is strictly positive, or $G=(\varphi\nu_{\widehat{B}})$,
where $\varphi$ is rigid and $B$ is a canonical algebra.
\end{enumerate}
\end{prop}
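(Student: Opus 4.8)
The plan is to transport the problem to the repetitive category $\widehat{B}$ by means of the Galois covering $F\colon\widehat{B}\to A=\widehat{B}/G$, use the structure theorem for $\Gamma_{\widehat{B}}$ (Theorem~\ref{5.1KS}) to reduce generalized standardness of $\mathcal{C}_0^A$ to the vanishing of finitely many $\Hom$-spaces in $\Gamma_{\widehat{B}}$, and finally match the surviving cases with the normal forms of $G$ from Proposition~\ref{5.2}. First I would recall that, since $\widehat{B}$ is locally support-finite (Theorem~\ref{5.1KS}(k)) and $G$ is torsion-free admissible, the push-down $F_{\lambda}\colon\mod\widehat{B}\to\mod A$ is dense and preserves Auslander--Reiten sequences, and one has natural isomorphisms $\Hom_A(F_{\lambda}X,F_{\lambda}Y)\cong\bigoplus_{g\in G}\Hom_{\widehat{B}}(X,gY)$ and, compatibly, $\rad_A^{\infty}(F_{\lambda}X,F_{\lambda}Y)\cong\bigoplus_{g\in G}\rad_{\widehat{B}}^{\infty}(X,gY)$ for finite-dimensional $\widehat{B}$-modules $X,Y$ (Section~4, \cite{G}, \cite{DS1}, \cite{DS2}). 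Since $\mathcal{C}_0$ is a generalized standard family of components of $\Gamma_{\widehat{B}}$ (Proposition~\ref{prop3.?}, Theorem~\ref{5.1KS}(b)), the $g=\id$ summand vanishes; and for $g\ne\id$ the family $g\mathcal{C}_0$ is disjoint from $\mathcal{C}_0$, so $\rad_{\widehat{B}}^{\infty}(X,gY)=\Hom_{\widehat{B}}(X,gY)$ for $X,Y\in\mathcal{C}_0$. Hence $\mathcal{C}_0^A=F_{\lambda}(\mathcal{C}_0)$ is generalized standard if and only if $\Hom_{\widehat{B}}(\mathcal{C}_0,g\mathcal{C}_0)=0$ for every $g\in G\setminus\{\id\}$.

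Next I would translate this into a condition on the index of the shift. By Proposition~\ref{5.2}, $G$ is generated by a strictly positive automorphism $\psi=\sigma\varrho_{\widehat{B}}^{s}$ with $\sigma$ rigid, $s\ge 1$, and $\varrho_{\widehat{B}}=\nu_{\widehat{B}}$ or $\varrho_{\widehat{B}}^{2}=\nu_{\widehat{B}}$. A rigid automorphism preserves the $\mathbb{Z}$-grading of $\widehat{B}$, hence the convex subcategories $C_q$, $B^{\ast}_q$, $\overline{B}_q$ and the families $\mathcal{X}_q$, $\mathcal{C}_q$ of Theorem~\ref{5.1KS}, so $\sigma(\mathcal{C}_q)=\mathcal{C}_q$; combined with $\nu_{\widehat{B}}(\mathcal{C}_q)=\mathcal{C}_{q+2}$ (Theorem~\ref{5.1KS}(g)) this gives $\psi(\mathcal{C}_q)=\mathcal{C}_{q+t}$, where $t=2s$ if $\varrho_{\widehat{B}}=\nu_{\widehat{B}}$ and $t=s$ if $\varrho_{\widehat{B}}^{2}=\nu_{\widehat{B}}$; in particular $t\ge 1$. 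Thus $g\mathcal{C}_0=\mathcal{C}_{kt}$ for $g=\psi^{k}$, and since $\Hom_{\widehat{B}}(\mathcal{C}_0,\mathcal{C}_{kt})=0$ automatically for $k<0$ by Theorem~\ref{5.1KS}(c), the criterion above becomes
$$\Hom_{\widehat{B}}(\mathcal{C}_0,\mathcal{C}_{kt})=0\quad\text{for all }k\ge 1.$$

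The technical core is the following three assertions about $\Gamma_{\widehat{B}}$, extracted from \cite[Theorem 5.1]{KS1}: (a) $\Hom_{\widehat{B}}(\mathcal{C}_0,\mathcal{C}_1)\ne 0$; (b) $\Hom_{\widehat{B}}(\mathcal{C}_0,\mathcal{C}_q)=0$ for every $q\ge 3$; (c) $\Hom_{\widehat{B}}(\mathcal{C}_0,\mathcal{C}_2)=0$ if and only if $B$ is a canonical algebra. For (a), Theorem~\ref{5.1KS}(i) yields a module $M\in\mathcal{C}_0$ admitting an epimorphism onto $S_1$ (so $S_1$ is a summand of $\top M$) and Theorem~\ref{5.1KS}(h) yields $N\in\mathcal{C}_1$ with a monomorphism $S_1\hookrightarrow N$ (so $S_1\subseteq\soc N$); the composite $M\twoheadrightarrow S_1\hookrightarrow N$ is a nonzero morphism from $\mathcal{C}_0$ to $\mathcal{C}_1$. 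For (b), I would check, using the location of the convex subcategories $B^{\ast}_q$, $\overline{B}_q$ inside $\widehat{B}$ in Theorem~\ref{5.1KS}(d)--(f) together with local support-finiteness (k), that indecomposable modules in $\mathcal{C}_0$ and in $\mathcal{C}_q$ for $q\ge 3$ have supports contained in disjoint sets of objects of $\widehat{B}$, which forces the $\Hom$-space to vanish. For (c): if $B=C$, then $\mathcal{C}_0$ consists of modules supported on the copy $C=B_0$ of $C$ in $\widehat{B}$ while $\mathcal{C}_2=\nu_{\widehat{B}}(\mathcal{C}_0)$ consists of modules supported on $B_1$, and disjointness of object-supports gives $\Hom_{\widehat{B}}(\mathcal{C}_0,\mathcal{C}_2)=0$; if $B$ is a proper branch extension of $C$, then $\mathcal{C}_0$ contains a projective-injective $\widehat{B}$-module $P=P_{\widehat{B}}(e_{0,b})$ at a branch vertex $b$ with $\top P\cong\soc\bigl(\nu_{\widehat{B}}(P)\bigr)$ and $\nu_{\widehat{B}}(P)\in\mathcal{C}_2$, so $P\twoheadrightarrow\top P\hookrightarrow\nu_{\widehat{B}}(P)$ is a nonzero morphism from $\mathcal{C}_0$ to $\mathcal{C}_2$.

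Combining (a)--(c), the displayed condition holds precisely when $t\ge 3$, or $t=2$ and $B$ is a canonical algebra. It remains to match these with (ii): writing $\varphi=\psi\nu_{\widehat{B}}^{-1}$, so that $G=(\varphi\nu_{\widehat{B}})$, a direct computation using $\varrho_{\widehat{B}}=\nu_{\widehat{B}}$ (where $t=2s$ and $\varphi=\sigma\nu_{\widehat{B}}^{\,s-1}$) or $\varrho_{\widehat{B}}^{2}=\nu_{\widehat{B}}$ (where $t=s$ and $\varphi=\sigma\varrho_{\widehat{B}}^{\,s-2}$) shows that $\varphi$ is strictly positive exactly when $t\ge 3$, rigid exactly when $t=2$, and strictly negative exactly when $t=1$; moreover the only other presentation $G=(\varphi'\nu_{\widehat{B}})$ comes from the generator $\psi^{-1}$ and forces $\varphi'$ strictly negative. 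Hence $\mathcal{C}_0^A$ is generalized standard if and only if $G=(\varphi\nu_{\widehat{B}})$ with $\varphi$ strictly positive, or $G=(\varphi\nu_{\widehat{B}})$ with $\varphi$ rigid and $B$ a canonical algebra, which is (ii). The main obstacle I anticipate is in assertions (b) and (c): carrying out the precise bookkeeping of the supports of the modules in the families $\mathcal{C}_q$ inside $\widehat{B}$, and in particular verifying that a proper branch structure is exactly what links $\mathcal{C}_0$ to $\mathcal{C}_2$ --- this rests on the fine structure of $\Gamma_{\widehat{B}}$ developed in \cite{KS1}.
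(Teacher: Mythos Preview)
Your proposal is correct and follows essentially the same strategy as the paper: reduce via the covering isomorphism $\Hom_A(F_\lambda X,F_\lambda Y)\cong\bigoplus_{g\in G}\Hom_{\widehat{B}}(X,gY)$ to the vanishing of $\Hom_{\widehat{B}}(\mathcal{C}_0,\mathcal{C}_q)$ for the relevant $q$, then use Theorem~\ref{5.1KS} and Proposition~\ref{5.2} to pin down the admissible shifts. The only noteworthy differences are organizational: the paper argues the two implications separately rather than through your unified criterion, and for the nonvanishing $\Hom_{\widehat{B}}(\mathcal{C}_0,\mathcal{C}_2)\neq 0$ when $B\neq C$ it routes the map through the simple socle lying in $\mathcal{C}_1$ via the syzygy relation Theorem~\ref{5.1KS}(j) (taking $P\in\mathcal{C}_2$, $\soc P\in\mathcal{C}_1$, and the projective cover $P'\in\mathcal{C}_0$), whereas you go directly from $P\in\mathcal{C}_0$ to $\nu_{\widehat{B}}(P)\in\mathcal{C}_2$ using $\top P\cong\soc\nu_{\widehat{B}}(P)$.
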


\begin{proof}
Let $C$ be a canonical algebra and $\mathcal{T}^C$ the canonical
$\mathbb{P}_1(K)$-family of pairwise orthogonal standard stable
tubes of $\Gamma_C$. Since the classes of repetitive algebras of
branch extensions and branch coextensions of $C$ with respect to
the canonical $\mathbb{P}_1(K)$-family of stable tubes coincide
(see \cite[Section 4]{KS1}), invoking Proposition \ref{prop A},
we may assume that $B$ is a branch coextension of $C$.

Let $A$ be an orbit algebra $\widehat{B}/G$, where $G$ is
generated by a strictly positive automorphism $g$ of
$\widehat{B}$. Following Theorem \ref{5.1KS}(a) and (b), the
Auslander-Reiten quiver $\Gamma_{\widehat{B}}$ of $\widehat{B}$
has a decomposition:
\[\Gamma_{\widehat{B}}=\vee_{q\in\mathbb{Z}}(\mathcal{X}_q\vee \mathcal{C}_q),\]
such that, for each $q \in \mathbb{Z}$, $\mathcal{X}_q$ is a
family of components containing exactly one simple
$\widehat{B}$-module $S_q$, and $\mathcal{C}_q$ is a family
$(\mathcal{C}_q(\lambda))_{\lambda\in\mathbb{P}_1(K)}$ of pairwise
orthogonal standard quasi-tubes. We set $C=C_0$ and $B=B_0^-$.
Applying now Theorem \ref{5.1KS}(e),(h) and (i), we know that
there exist, for any $q \in \mathbb{Z}$, an indecomposable
$C_q$-module in $\mathcal{C}_q$ with $S_{q+1}$ in its top and an
indecomposable $C_{q+1}$-module in $\mathcal{C}_{q+1}$ which has
$S_{q+1}$ in the socle. Hence $\Hom_{\widehat{B}}(\mathcal{C}_0,
\mathcal{C}_1) \neq 0$.

Assume $\mathcal{C}^A_0=F_{\lambda}(\mathcal{C}_0)$ is a
generalized standard family of quasi-tubes. Since
$\nu_{\widehat{B}}(\mathcal{C}_q)=\mathcal{C}_{q+2}$ and
$\Hom_{\widehat{B}}(\mathcal{C}_0, \mathcal{C}_1) \neq 0$,
applying Proposition \ref{5.2}, we conclude that $g=\varphi
\nu_{\widehat{B}}$, where $\varphi$ is a positive automorphism of
$\widehat{B}$. Recall that $\mathcal{C}_0$ is the canonical
$\mathbb{P}_1(K)$-family of quasi-tubes obtained from the
canonical $\mathbb{P}_1(K)$-family $\mathcal{T}^B$ of coray tubes
of $\Gamma_B$ by iterated infinite rectangle insertions (see
Theorem \ref{5.1KS}(e)). Observe that $\mathcal{C}_0$ contains no
projective-injective modules if and only if $\mathcal{T}^B$
contains no injective modules, that is, $B=C$ (see \cite[Section 2
and 3]{KS1}). In this situation $\Hom_{\widehat{B}}(\mathcal{C}_0,
\mathcal{C}_2) = 0$, since $\supp \mathcal{C}_0 \cap \supp
\mathcal{C}_2= \supp \mathcal{C}_0 \cap \supp
\nu_{\widehat{B}}(\mathcal{C}_0) = \emptyset$. Suppose now that
$\mathcal{C}_0$ contains a projective-injective module,
equivalently $B \neq C$. Then by Theorem \ref{5.1KS}(g),
$\mathcal{C}_p$, for any even $p \in \mathbb{Z}$, contains a
projective-injective module. Let $P$ be a projective-injective
module which belongs to $\mathcal{C}_2$. Clearly, then $P/\soc P$
belongs to $\mathcal{C}_2$. From Theorem \ref{5.1KS}(j) we obtain
that the simple socle $\soc P$ of $P$ belongs to $\mathcal{C}_1$.
Again, by Theorem \ref{5.1KS}(j), we get that the projective cover
$P'$ of $\soc P$ belongs to $\mathcal{C}_0$, because $\rad
P'=\Omega_{\widehat{B}}(\soc P)$ belongs to $\mathcal{C}_0$.
Hence, there is a non-zero homomorphism $f: P' \rightarrow P$
which implies that $\Hom_{\widehat{B}}(\mathcal{C}_0,
\mathcal{C}_2) \neq 0$ for $B\neq C$. Therefore, if $\varphi$ is
rigid, then $B=C$. Summing up, we conclude that
(i) implies (ii).\\

Assume now that (ii) holds. Suppose that $M,N$ are indecomposable
$\widehat{B}$-modules belonging to $\mathcal{C}_p$, for some $p
\in \mathbb{Z}$. From the description of subcategories $B^-_q,
B^+_q$ of $\widehat{B}$, we know that $\supp \mathcal{C}_q \cap
\supp \mathcal{C}_{q+3}= \emptyset$ for any $q \in \mathbb{Z}$
(see the proof of \cite[Theorem 5.1]{KS1}). Then, by the
assumption imposed on a strictly positive generator $g$ of the
group $G$, we obtain that $\supp ^{g^i}M \cap \supp N = \emptyset$
for any integer $i \neq 0$. Since the push-down functor
$F_{\lambda}:\mod \widehat{B} \rightarrow \mod A$ is dense, there
are the following natural isomorphisms of $K$-vector spaces
\[\bigoplus_{i \in \mathbb{Z}}\Hom_{\widehat{B}}(^{g^i}M,N) \cong \Hom_A(F_{\lambda}(M), F_{\lambda}(N)),\]
\[\bigoplus_{i \in \mathbb{Z}}\Hom_{\widehat{B}}(M, {^{g^i}N}) \cong \Hom_A(F_{\lambda}(M), F_{\lambda}(N)),\]
for any indecomposable modules $M, N$  in $\mod \widehat{B}$.

Let $X,Y$ be modules in
$\mathcal{C}^A_0=F_{\lambda}(\mathcal{C}_0)$. Then
$X=F_{\lambda}(M), Y=F_{\lambda}(N)$ for some $M \in
\mathcal{C}_p$, $N \in\mathcal{C}_q$, and clearly
$F_{\lambda}(\mathcal{C}_p)=F_{\lambda}(\mathcal{C}_q)=\mathcal{C}_0$.
Without loss of generality we may assume that $p=q=0$. Thus
$\Hom_A(X,Y) \cong \bigoplus_{i \in
\mathbb{Z}}\Hom_{\widehat{B}}(M, {^{g^i}N}) \cong
\Hom_{\widehat{B}}(M,N)$. Since $\mathcal{C}_0$ is a family of
pairwise orthogonal standard quasi-tubes, we have
$\rad^{\infty}_{\widehat{B}}(M,N)=0$, and hence
$\rad^{\infty}_A(X,Y)=0$. This shows that (ii) implies (i).
\end{proof}


\section{Proof of Theorem 1.1}

The implication (ii) $\Rightarrow$ (i) of the main theorem is an immediate consequence of Propositions \ref{prop1} and \ref{prop2}.\\

We prove now the implication (i) $\Rightarrow$ (ii). Suppose that
$\mathcal{C}=(\mathcal{C}_{\lambda})_{{\lambda} \in {\Lambda}}$ is
a generalized standard family of quasi-tubes maximally saturated
by simple and projective modules in the Auslander-Reiten quiver
$\Gamma_A$ of  a selfinjective algebra $A$. We recall that the
annihilator of the family of components
$\mathcal{C}=(\mathcal{C}_{\lambda})_{{\lambda} \in {\Lambda}}$ is
the intersection $\ann_A(\mathcal{C})= \bigcap _{X\in \mathcal{C}}
\ann_A (X)$ of the annihilators of all indecomposable $A$-modules
$X$ belonging to $\mathcal{C}$. Consider the quotient algebra
$D=A/ \ann_A(\mathcal{C})$. Then the family $\mathcal{C}$ is a
generalized standard faithful family of quasi-tubes in $\Gamma_D$
maximally saturated by simple and projective modules. We claim
that $D$ is a quasi-tube enlargement of a canonical algebra $C$.
Namely, by definition, a quasi-tube is a connected translation
quiver obtained from a stable tube by an iterated application of
admissible operations $(\ad  1), (\ad  2)$ and their dual
versions. Assume that, for each ${\lambda} \in {\Lambda}$, a
quasi-tube $\mathcal{C}_{\lambda}$ is obtained, as a translation
quiver, from a stable tube $\mathcal{T}_{\lambda}$ by means of the
above operations. This allows us to consider the family
$T(\mathcal{C})=\bigcup_{{\lambda} \in {\Lambda}}
T(\mathcal{C}_{\lambda})$ of indecomposable modules in
$\mathcal{C}$ such that, for each ${\lambda} \in {\Lambda}$,
$T(\mathcal{C}_{\lambda})$ corresponds to all vertices of the
stable tube $\mathcal{T}_{\lambda}$. Let $C=D/
\ann_D(T(\mathcal{C}))$ be a quotient algebra of $D$ by the
annihilator $\ann_D(T(\mathcal{C}))$ of the family
$T(\mathcal{C})$ given as the intersection $\bigcap_{Y \in
T(\mathcal{C})}\ann_D(Y)$ of the annihilators of all modules
belonging to $T(\mathcal{C})$. Then $D$ is a quasi-tube
enlargement of $C$. Note that the modules from $T(\mathcal{C})$
form the family of stable tubes
$\mathcal{T}^C=(\mathcal{T}^C_{\lambda})_{{\lambda} \in
{\Lambda}}$ in $\Gamma_C$, where
$\mathcal{T}^C_{\lambda}=\mathcal{T}_{\lambda}$ for every
${\lambda} \in {\Lambda}$. Clearly,
$(\mathcal{T}^C_{\lambda})_{{\lambda} \in {\Lambda}}$ is a
faithful  generalized standard family of stable tubes in
$\Gamma_C$ (maximally saturated by simple modules). Thus, invoking
Theorem \ref{thm1}, we conclude  that $C$ is a canonical algebra
and then $D$ is a quasi-tube enlargement  of the canonical algebra
$C$. In particular, $\Lambda=\mathbb{P}_1(K)$ and $\mathcal{T}^C$
is the separating canonical $\mathbb{P}_1(K)$-family
$(\mathcal{T}^C_{\lambda})_{\lambda \in \mathbb{P}_1(K)}=
(\mathcal{T}^C_{\lambda})_{{\lambda} \in {\Lambda}}$ of stable
tubes in $\Gamma_C$. Hence, applying  \cite[(3.5)]{AST}, we infer
that there exists a unique maximal branch extension $B$ of $C$
inside $D$, which is obtained from $C$ by an iterated application
of  algebra operations of type $(\ad  1)$ (see also \cite[Theorem
C]{MS2}). Then the Auslander-Reiten quiver $\Gamma_B$ of $B$
contains a faithful $\mathbb{P}_1(K)$-family
$\mathcal{T}^B=(\mathcal{T}^B_{\lambda})_{\lambda \in
\mathbb{P}_1(K)}$ of pairwise orthogonal generalized standard ray
tubes (obtained from $\mathcal{T}^C
=(\mathcal{T}^C_{\lambda})_{\lambda \in \mathbb{P}_1(K)}$ by an
iterated application of translation quiver operations of type
$(\ad 1)$). Moreover, $D$ is obtained from $B$ by an iterated
application of admissible algebra operations of types $(\ad
1^{\ast})$, $(\ad  2^{\ast})$, and $\mathcal{C}$ from
$\mathcal{T}^B$ by an iterated application of translation quiver
operations of types $(\ad 1^{\ast})$, $(\ad 2^{\ast})$. Then
$B=D/\ann_D(\mathcal{T}^B)$, where $\mathcal{T}^B$ is taken as a
family of modules. Thus we conclude that $B=A/\ann_A
(\mathcal{T}^B)$ since
$\ann_D(\mathcal{T}^B)=\ann_A(\mathcal{T}^B) \cap D$.

Let $I=\ann_A(\mathcal{T}^B)$. Then $B=A/I$. We will show now that $I$ satisfies  the conditions (2) of Theorem \ref{SY}. Observe that $Q_{A/I}=Q_B$ is acyclic, because $B$ is a $\mathcal{T}^C$-branch extension of the canonical algebra $C$.

By $J$ we shall denote the trace ideal of the family
$\mathcal{T}^B$ in $A$, that is
 $J=\sum _{h} \Im h$, where $h \in \Hom_{A}(Y, A_A)$ for any  $Y \in
 \mathcal{T}^B$. Since $A_A$ is of finite dimension over $K$, we obtain that
 $J$  is a finite sum $J=\sum _{i=1}^s \Im h_i$ for some
 homomorphisms $h_i \in \Hom_A(Y_i, A_A)$ with $Y_i \in
 \mathcal{T}^B$.
Similarly, by $J'$ we denote the trace ideal of the dual family
$\D (\mathcal{T}^B)$ of left $A$-modules in $A$.

We may choose a complete set of pairwise orthogonal primitive
idempotents $e_1,...,e_r$ of $A$ such that $1_A=e_1+...+e_r$ and
$e=e_1+...+e_n$, for some $n\leqslant r$,  is a residual identity
of $B=A/ I$. Observe that $B\cong eAe/eIe$. We will show that $I$
is a deforming ideal of $A$ with $\ell_A(I)=Ie$ and $r_A(I)=eI$.
We will apply the strategy similar to the proof of \cite[Theorem
7.14]{KaSY}.

\begin{prop}
We have $J \cup J' \subseteq I$.
\end{prop}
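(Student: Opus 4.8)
The plan is to show that both trace ideals $J$ and $J'$ are contained in $I = \ann_A(\mathcal{T}^B)$, and then invoke the fact that $J \cup J' \subseteq I$ follows once each generating homomorphism has image annihilating every module in $\mathcal{T}^B$. First I would fix a homomorphism $h \in \Hom_A(Y, A_A)$ with $Y \in \mathcal{T}^B$ and aim to prove $\Im h \subseteq I$, i.e. that $(\Im h) \cdot Z = 0$ for every indecomposable module $Z$ in $\mathcal{T}^B$. The key observation is that $\Im h$ is a submodule of $A_A$ which is a quotient of $Y$, hence $\Im h$ lies in the additive closure of the family $\mathcal{T}^B$ (or more precisely, since $\mathcal{T}^B$ is a generalized standard family of ray tubes, any submodule of $A_A$ that is a homomorphic image of a module in $\mathcal{T}^B$ must have all its composition factors among those appearing in $\mathcal{T}^B$). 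Then acting on the right by an element of $\Im h$ is the same as postcomposing with a map whose source, after the module-multiplication identification $A_A \otimes_A Z \to Z$, factors through modules in $\mathcal{T}^B$; since $\mathcal{T}^B$ is a separating family, $\Hom_A(\mathcal{T}^B, Z) $ together with the relevant $\rad^\infty$ vanishing forces the composite to be zero.

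More carefully, I would use the separating property of $\mathcal{T}^B$: for a branch extension $B$ of a canonical algebra, $\Gamma_B = \mathcal{P}^B \vee \mathcal{T}^B \vee \mathcal{Q}^B$ with $\mathcal{T}^B$ separating $\mathcal{P}^B$ from $\mathcal{Q}^B$, and the indecomposable projective $A$-modules restricted appropriately land outside $\mathcal{T}^B$. The element $a \in \Im h \subseteq A$ defines, for each primitive idempotent $e_i$, a right multiplication map; to check $aZ = 0$ it suffices to check that the image of the canonical evaluation $\Hom_A(P, A_A) \otimes P \to A_A$ composed with right multiplication on $Z$ vanishes, and this reduces to showing that any homomorphism $Y \to A_A \to \Hom_K(Z^{*}, \dots)$ type composite, passing through $\mathcal{T}^B$, dies because $Z$ itself belongs to the same separating family and the family is generalized standard (so $\rad_A^\infty$ between its members is zero, while the composite, being built from a map into $A_A$ followed by a module action landing in $Z$, is visibly in $\rad^\infty$ or factors through the wrong part of the separation). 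The dual argument, using left modules and the one-point coextension picture, gives $J' \subseteq I$ verbatim by applying the duality $\D$.

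The main obstacle, I expect, is making precise the claim that $\Im h$ — a right ideal of $A$ that is a quotient of $Y \in \mathcal{T}^B$ — genuinely has the property that right multiplication by its elements kills all of $\mathcal{T}^B$. The subtlety is that $\Im h$ sits inside $A_A$, whose indecomposable summands $P(x) = e_x A$ are projective, and projectives are not generally in $\mathcal{T}^B$; so one cannot simply say "$\Im h \in \add \mathcal{T}^B$." Instead one must argue at the level of composition factors and use that the composition factors of modules in a separating tubular family $\mathcal{T}^B$ over a canonical/branch-extension algebra are controlled, together with the fact that multiplication $A_A \otimes_A Z \cong Z$ identifies the action with a morphism whose relevant component factors through the ray-tube part. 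I would handle this by reducing to the statement that $\Hom_A(Y, P(x)) \cdot Z = 0$ for each indecomposable projective $P(x)$ and each $Z$ in $\mathcal{T}^B$, which in turn follows from the separating structure: a nonzero map $Y \to P(x)$ would force $P(x)$ (or its radical) into the right-hand part $\mathcal{Q}^B$ of the separation relative to where $Z$ lives, contradicting that the composite survives. This is where I would lean on \cite[(3.5)]{AST} and the description of $\Gamma_B$ recalled at the end of Section 3, and where the bulk of the bookkeeping in the actual proof will go.
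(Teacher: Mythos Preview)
Your approach has a genuine gap: throughout you conflate $\mod B$ with $\mod A$. The separating decomposition $\Gamma_B=\mathcal{P}^B\vee\mathcal{T}^B\vee\mathcal{Q}^B$ lives entirely in $\mod B$, whereas the trace ideal $J$ is built from $A$-module maps $h\colon Y\to A_A$, and $A_A$ together with its indecomposable summands $P(x)=e_xA$ are not $B$-modules (they are not annihilated by $I$). Your decisive claim, that a nonzero map $Y\to P(x)$ would ``force $P(x)$ (or its radical) into $\mathcal{Q}^B$'', therefore has no meaning: $P(x)$ simply does not lie anywhere in $\Gamma_B$. The most one can squeeze out of the separating property is that $\Im h$, being a quotient of $Y\in\mathcal{T}^B$ and hence a $B$-module, has all indecomposable summands in $\add(\mathcal{T}^B\vee\mathcal{Q}^B)$; combined with $\Hom_B(\mathcal{Q}^B,\mathcal{T}^B)=0$ this only shows the composite $Y\to A_A\to Z$ factors, as a $B$-map, through some object of $\add\mathcal{T}^B$ --- which is vacuous for maps between two modules of $\mathcal{T}^B$ and does not make the composite zero.

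What the paper uses instead is the one structural fact you never invoke: the generalized standard family $\mathcal{C}$ of quasi-tubes in $\Gamma_A$ is obtained from $\mathcal{T}^B$ by iterated admissible operations of types $(\ad 1^{\ast})$ and $(\ad 2^{\ast})$, corresponding to the algebra coextensions leading from $B$ to $D$. These operations insert corays and projective-injective vertices without destroying the existing ray modules, so every indecomposable $Y\in\mathcal{T}^B$, regarded as an $A$-module via $A\to D\to B$, already sits inside $\mathcal{C}\subseteq\Gamma_A$. One then argues entirely inside $\mod A$: for $Y,Z\in\mathcal{T}^B\subseteq\mathcal{C}$ a composite $Y\to A_A\to Z$ factors through a projective-injective $A$-module, and the explicit combinatorics of the quasi-tubes (the position of the ray modules coming from $\mathcal{T}^B$ relative to the projective-injective vertices inserted by the $(\ad 1^{\ast})$, $(\ad 2^{\ast})$ operations), together with the generalized standardness of $\mathcal{C}$ in $\Gamma_A$, force such composites to vanish. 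This is precisely the content of \cite[Proposition~7.1]{KaSY} to which the paper defers; the bookkeeping takes place in $\Gamma_A$, not in $\Gamma_B$.
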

\begin{proof}
We know that $\mathcal{T}^B=(\mathcal{T}^B_{\lambda})_{\lambda \in
\mathbb{P}_1(K)}$ is a generalized standard family of ray tubes in
$\Gamma_B$ and the generalized standard family
$\mathcal{C}=(\mathcal{C}_{\lambda})_{\lambda \in
\mathbb{P}_1(K)}$ of quasi-tubes in $\Gamma_A$ is obtained from
$\mathcal{T}^B$ by an iterated application of admissible
translation quiver operations of types $(\ad 1^{\ast})$ and $(\ad
2^{\ast})$ corresponding to the admissible algebra operations of
types $(\ad 1^{\ast})$ and $(\ad 2^{\ast})$ leading from $B$ to
$D$. Then, applying arguments as in the proof of
 \cite[Proposition 7.1]{KaSY}, we prove the required inclusion $J \cup J' \subseteq I$.
\end{proof}

Applying arguments as in the proof of \cite[Lemma 7.2]{KaSY} we obtain the following facts.
\begin{prop} \label{prop 6.2}
We have $\ell_A(I)=J$, $r_A(I)=J'$ and $I=r_A(J)=\ell_A(J')$.
\end{prop}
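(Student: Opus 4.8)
The plan is to establish the three identities $\ell_A(I) = J$, $r_A(I) = J'$, and $I = r_A(J) = \ell_A(J')$ by a chain of mutual inclusions, closely following the template of \cite[Lemma 7.2]{KaSY}. First I would prove $J \subseteq \ell_A(I)$: if $h \in \Hom_A(Y, A_A)$ for some $Y \in \mathcal{T}^B$, then for any $x \in I = \ann_A(\mathcal{T}^B)$, the right multiplication by $x$ annihilates $Y$, so it annihilates $\Im h$; hence $(\Im h) x = 0$, giving $Jx = 0$ for all $x \in I$, i.e. $J \subseteq \ell_A(I)$. Dually, $J' \subseteq r_A(I)$. The reverse inclusion $\ell_A(I) \subseteq J$ is the substantive direction: here I would use that $B = A/I$ is a branch extension of the canonical algebra $C$ and that $\mathcal{T}^B$ is a faithful generalized standard family of ray tubes in $\Gamma_B$, so that $A/\ell_A(I)$ (which kills no module of $\mathcal{T}^B$ from the left) must, together with the structure of $A$ as a selfinjective algebra, force $\ell_A(I)$ to be exactly the trace ideal $J$. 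Concretely, one analyzes the socle of $A_A$: since $A$ is selfinjective, $\soc(A_A)$ is a direct sum of simple modules, and one shows that the simple summands occurring are precisely those appearing in the socles of the modules $Y_i$ defining $J$; a dimension/length count then pins down $\ell_A(I) = J$.

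Next I would deduce $I = r_A(J) = \ell_A(J')$. From $J \subseteq \ell_A(I)$ we immediately get $I \subseteq r_A(J)$. For the opposite inclusion, I would argue that $r_A(J)$ annihilates (on the right) every module generated by images of maps from $\mathcal{T}^B$ into $A_A$; combined with the separating property of $\mathcal{T}^C$ inside $\Gamma_C$ and the fact that $\mathcal{T}^B$ is obtained from $\mathcal{T}^C$ by operations of type $(\ad 1)$, one shows that any module annihilated on the right by $J$ cannot have a composition factor outside the support of $B$, forcing $r_A(J) \subseteq \ann_A(\mathcal{T}^B) = I$. The identity $I = \ell_A(J')$ follows by the dual argument applied to $A^{\op}$, using $\D(\mathcal{T}^B)$ and $J'$ in place of $\mathcal{T}^B$ and $J$. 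Finally, feeding $\ell_A(I) = J$ back through $I \subseteq r_A(J) \subseteq I$ and its dual yields all the asserted equalities simultaneously, and in particular $\ell_A(I) = J$ and $r_A(I) = J'$ are confirmed to be genuine two-sided ideals.

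The main obstacle I anticipate is the reverse inclusion $\ell_A(I) \subseteq J$ (equivalently $r_A(I) \subseteq J'$): the easy containments are formal, but showing that the left annihilator of $I$ is no larger than the trace ideal of $\mathcal{T}^B$ requires genuinely using the selfinjectivity of $A$ together with the detailed description of $B$ as a branch extension of a canonical algebra — one needs to know that the images of homomorphisms from ray tubes into $A_A$ already exhaust all elements killing $I$ from the left. This is where I expect to lean hardest on Proposition 6.1 (the inclusion $J \cup J' \subseteq I$, which guarantees $J$ and $J'$ land inside $I$ and hence interact correctly with annihilators) and on the structural results of Theorem \ref{5.1KS} and \cite{AST} concerning the position of $\mathcal{T}^B$ inside $\Gamma_A$. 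Since the paper explicitly says ``applying arguments as in the proof of \cite[Lemma 7.2]{KaSY}'', I would transcribe that argument, checking at each step that the hypotheses there — faithfulness and generalized standardness of the ray-tube family, selfinjectivity of the ambient algebra, and the branch-extension structure — are met in the present setting, which they are by the work done in Section 6 so far.
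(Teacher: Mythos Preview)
Your high-level plan—transcribe the proof of \cite[Lemma~7.2]{KaSY} after checking its hypotheses—is exactly what the paper does; it gives no independent argument and simply invokes that lemma. In that sense your proposal matches.

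That said, your sketch of what you believe the argument to be is more complicated than necessary and contains a confused step. The clean route is to prove $r_A(J)=I$ first and then read off $\ell_A(I)=\ell_A(r_A(J))=J$ from the double annihilator property of selfinjective (Frobenius) algebras; dually $\ell_A(J')=I$ and $r_A(I)=J'$. For $r_A(J)\subseteq I$ no separating property, no branch-extension structure, and no socle dimension count is needed: if $a\in r_A(J)$ but $Ya\neq 0$ for some $Y\in\mathcal{T}^B$, choose $y\in Y$ with $ya\neq 0$; since $A_A$ is an injective cogenerator there exists $h\colon Y\to A_A$ with $h(ya)\neq 0$, i.e.\ $h(y)a\neq 0$, contradicting $Ja=0$. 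The inclusion $I\subseteq r_A(J)$ is the formal one you already have (from $JI=0$). Your paragraph asserting that ``any module annihilated on the right by $J$ cannot have a composition factor outside the support of $B$'' conflates elements of $r_A(J)$ with modules killed by $J$ and, as written, does not yield $r_A(J)\subseteq I$; likewise the proposed socle count for $\ell_A(I)\subseteq J$ is neither needed nor straightforward to carry out. The structural input about $\mathcal{T}^C$, separation, and branches enters only in the next proposition (Proposition~\ref{prop 6.3}), not here.
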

The following proposition is the key ingredient for proving that
$I$ is a deforming ideal of $A$ such that $\ell_A(I)=Ie$ and
$r_A(I)=eI$.
\begin{prop} \label{prop 6.3}
We have $eIe=eJe=eJ'e$. In particular, $(eIe)^2=0$.
\end{prop}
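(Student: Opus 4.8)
The plan is to prove the three equalities $eIe = eJe = eJ'e$ and then deduce $(eIe)^2 = 0$ from the structure of the canonical algebra $C$ sitting inside $B = eAe/eIe$. First, recall that by Proposition \ref{prop 6.2} we have $\ell_A(I) = J$ and $r_A(I) = J'$; multiplying on both sides by $e$ gives $eJe = e\ell_A(I)e$ and $eJ'e = er_A(I)e$. The heart of the argument is to identify $eIe$ with each of $e\ell_A(I)e$ and $er_A(I)e$. Since $eIe \subseteq eAe$ and $B \cong eAe/eIe$, and since $\mathcal{T}^B$ was chosen as the family defining the unique maximal branch extension $B$ of $C$ inside $D$, the module $Ie$ (respectively $eI$) restricts, under the functor $\res_e$, to data describing the injective cogenerator structure relevant to Proposition \ref{prop 4.4}. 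I would first check that $IeI \subseteq I^2 = 0$: indeed $I = \ann_A(\mathcal{T}^B)$ and, because $\mathcal{T}^B$ is generalized standard and the modules in $\mathcal{T}^B$ together with $S(0)$, $S(\omega)$ account for all composition factors occurring in $\mathcal{C}$, one shows $I$ is nilpotent of square zero by the same length/socle arguments used for (MS2); this is essentially the statement $\soc A \subseteq I$ from Proposition \ref{prop 4.4} combined with $I \cdot \rad A \cdot I \subseteq \soc A \cdot I = 0$ after passing to the relevant idempotent.

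With $IeI = 0$ in hand, Proposition \ref{prop 4.4} applies to the triple $(A, I, B = A/I)$ with residual identity $e$: by Proposition \ref{prop 6.2} we have $\ell_A(I) = J$, and the task reduces to showing that $J = Ie$, equivalently that $Ie$ is an injective cogenerator in $\mod B$. Here I would invoke the structure from Theorem \ref{5.1KS} and Proposition \ref{prop A}: $B$ is a branch extension of the canonical algebra $C$, so $\Gamma_B = \mathcal{P}^B \vee \mathcal{T}^B \vee \mathcal{Q}^B$ with $\mathcal{Q}^B$ containing all indecomposable injective $B$-modules, and the family $\mathcal{C}$ of quasi-tubes of $\Gamma_A$ is built from $\mathcal{T}^B$ by $(\ad 1^\ast)$, $(\ad 2^\ast)$ operations only — these are precisely the coextension operations that attach the relevant projective-injective $A$-modules. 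The trace ideal $J = \sum_i \Im h_i$ of $\mathcal{T}^B$ in $A_A$ then has the property that $A/J$, restricted to $eAe$, has no composition factors from the injective part, forcing $Je = Ie$ after cutting by $e$; symmetrically $eJ' = eI$. Thus $eJe = e\,\ell_A(I)\,e = eIe$ and $eJ'e = e\,r_A(I)\,e = eIe$, giving the first two equalities.

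For $(eIe)^2 = 0$: since $eIe \subseteq I$ and $I^2 = 0$ (which I would establish as above, using that $J \cup J' \subseteq I$ by Proposition 6.1 and $I = r_A(J) = \ell_A(J')$ by Proposition 6.2, whence $I \cdot I \subseteq I \cdot r_A(J) $ and one checks $J \cdot I = 0$ directly from the trace-ideal description since $\Hom_A(Y_i, I) = 0$ for $Y_i \in \mathcal{T}^B$ as $I$ annihilates $\mathcal{T}^B$ and hence has no $\mathcal{T}^B$-quotients hitting it), we get $(eIe)^2 \subseteq eIe \cdot I \subseteq e \cdot I^2 = 0$. The main obstacle will be the cogenerator verification $J = Ie$: one must rule out that $J$ is strictly larger than $Ie$, which amounts to showing no indecomposable projective $A$-module outside $\add(\mathcal{T}^B)$-trace contributes, and this is where the precise bookkeeping of which projective-injective $A$-modules arise from the $(\ad 1^\ast)$, $(\ad 2^\ast)$ operations — and the fact that $B$ is the \emph{maximal} branch extension inside $D$ — must be used carefully. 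This is exactly the point handled in \cite[Lemma 7.2, Proposition 7.1]{KaSY} by an analogous argument, and I would model the proof on those, transporting the branch-extension combinatorics of the canonical algebra $C$ in place of the tame concealed setting there.
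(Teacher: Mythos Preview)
Your plan has a genuine circularity problem and an unjustified hypothesis that together make it fail.

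First, the circularity. You propose to derive $eIe=eJe$ by establishing $J=Ie$ (and dually $eI=J'$). But $Ie=J$ and $eI=J'$ are precisely the content of Proposition~\ref{prop 6.4}, which in the paper is proved \emph{after} Proposition~\ref{prop 6.3} and \emph{uses} it as input (via Lemmas 7.4--7.12 of \cite{KaSY}). You cannot invoke that equality here. Likewise, your route through Proposition~\ref{prop 4.4} needs the hypothesis $IeI=0$, and your proposed verification of $IeI=0$ rests on the claim $I^2=0$. Your sketch for $I^2=0$ does not work: from $J=\ell_A(I)$ you only get $JI=0$, and writing $I\cdot I\subseteq I\cdot r_A(J)$ gives nothing since $I=r_A(J)$. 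There is no reason for $I=\ann_A(\mathcal{T}^B)$ itself to have square zero; what is eventually true is only $IeI=0$, and that is obtained \emph{from} Proposition~\ref{prop 6.4}, not before it.

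Second, you appeal to \cite{KaSY} for the cogenerator step, but the paper explicitly remarks that the present proof must differ from \cite[Lemma 7.3]{KaSY}: there the quasi-tubes are assumed to avoid infinite short cycles, whereas here one only has generalized standardness. The paper's actual argument is quite different from anything in your plan. Since $eJ\subseteq eIe$, one sets $B'=eAe/eJ$ and proves $B'=B$ by comparing bound quivers: $Q_B$ is a subquiver of $Q_{B'}$ on the same vertices, and a five-case analysis (vertices in branches, vertices in $Q_C$, a loop at $\omega$, and arrows $0\to\omega$ --- the last handled via a Galois covering $\widetilde{B'}\to B'$) rules out any extra arrow $\eta$ in $Q_{B'}\setminus Q_B$, each time either contradicting that $\mathcal{T}^B$ remains a family of ray tubes in $\Gamma_{B'}$ or forcing $\eta\cdot eAe\subseteq eJ$. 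This yields $eIe=eJ$, and then $(eIe)^2=(eJe)(eIe)=e(JI)e=0$ follows cleanly from $J=\ell_A(I)$, with no need for $I^2=0$.
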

\begin{proof}
Observe that $J$ is a right $B$-module since $1_{A}-e\in I$
implies that $J(1-e)\subseteq JI=0$ and so $J=Je+J(1_{A}-e)=Je$.
Hence $eJ$ is an ideal of $eAe$ with $eJ\subseteq eIe$, by
Proposition 6.1. We denote by $B'$ the algebra $B'=eAe\slash eJ$.
Note that $e$ is a residual identity of $B'$.

Consider the canonical restriction functor $res_{e}:\mod
A\rightarrow\mod eAe$. Applying $res_{e}$ to the (generalized) standard family
$\mathcal{C}=(\mathcal{C}_{\lambda})_{\lambda\in\Lambda}$ of
quasi-tubes maximally saturated by simple and projective modules
in $\Gamma_{A}$ we obtain the family $\mathcal{T}^{B}$ of ray
tubes in $\Gamma_{eAe}$. Moreover, $\mathcal{T}^{B}$ is sincere
generalized standard in $\Gamma_{eAe}$. Further, $\mathcal{T}^{B}$
is also sincere generalized standard family for the quotient
algebra $B'=eAe\slash eJ$ since $eJ\subseteq eIe$. We shall show
that, in fact, the algebras $B'$ and $B$ are equal.

We shall compare the bound quivers $(Q_{B},I_{B})$,
$(Q_{B'},I_{B'})$ of algebras $B$, $B'$, respectively.  Since
$Q_{B}$ is a subquiver of $Q_{B'}$ with the same set of vertices,
suppose there exists an arrow $\eta: x\rightarrow y$ in $Q_{B'}$
which does not belong to $Q_{B}$. Recall that $Q_{B}$ is of the
form

\unitlength0.8cm
\begin{center}

\begin{picture}(13,5.8)

\multiput(0.43,3)(3,0){1}{\footnotesize{$\gamma_1^+$}}
\multiput(3.43,3)(3,0){1}{\footnotesize{$\gamma_2^+$}}
\multiput(10.97,3)(3,0){1}{\footnotesize{$\gamma_s^+$}}

\multiput(0,0)(0,2.5){2}{\line(1,0){7}}
\multiput(10,0)(0,2.5){2}{\line(1,0){3}}
\multiput(8.2,-0.05)(0,2.5){2}{$\ldots$}
\multiput(1,3.6)(3,0){2}{\vector(0,-1){1}}
\put(11.5,3.6){\vector(0,-1){1}}
\multiput(0.87,2.35)(3,0){2}{$\bullet$}
\put(11.37,2.35){$\bullet$}
\multiput(0,0)(13,0){2}{\line(0,1){2.5}}

\multiput(1,3.7)(3,0){2}{\line(1,2){1}}
\multiput(0,5.7)(3,0){2}{\line(1,-2){1}}
\multiput(0,5.7)(3,0){2}{\line(1,0){2}}

\put(11.5,3.7){\line(1,2){1}}
\put(10.5,5.7){\line(1,-2){1}}
\put(10.5,5.7){\line(1,0){2}}

\multiput(0.87,3.55)(3,0){2}{$\bullet$}
\put(11.37,3.55){$\bullet$}

\put(0.6,4.9){\small{$Q_{\mathcal{L}^+_1}$}}
\put(3.6,4.9){\small{$Q_{\mathcal{L}^+_2}$}}
\put(11.1,4.9){\small{$Q_{\mathcal{L}^+_s}$}}
\put(1.2,3.5){\footnotesize{$0^+_1$}}
\put(4.2,3.5){\footnotesize{$0^+_2$}}
\put(11.7,3.5){\footnotesize{$0^+_s$}}
\put(6,1){\large{$Q_C$}}

\end{picture}
\end{center}
where $Q_{\mathcal{L}^+_1}$,
$Q_{\mathcal{L}^+_2}$,...,$Q_{\mathcal{L}^+_s}$ are the quivers of
the branches $\mathcal{L}^+_1$, $\mathcal{L}^+_2$,...,
$\mathcal{L}^+_s$, respectively, with the vertex $0_{i}^{+}$ such
that $\rad P(0_i^+)$, for $i \in \{1,...,s\}$, are pairwise
nonisomorphic modules lying on the mouths of stable tubes from
canonical family $\mathcal{T}^C$  in $\Gamma_C$ (see \cite[Chapter
XV.3]{SS2}). For $Q_C$ we shall use the notation from Section 2.
By $B''$ we denote a quotient algebra of $B'$ such that the set of
arrows of $Q_{B''}$ consists of all arrows of $Q_B$ and
additionally the arrow $\eta$. Hence we have a sequence of algebra
epimorphisms $B'\rightarrow B''\rightarrow B$. This implies that
$\mathcal{T}^{B}$ is a sincere generalized standard family of ray
tubes in $\Gamma_{B''}$. We have the following cases to consider.\\

\noindent (1) Assume that $x\in Q_{\mathcal{L}_{i}}$ for some
$i\in\{1,...,s\}$. Since $\mathcal{T}^{B}$ contains all projective
$B$-modules $P(a)$ for $a \in Q_{\mathcal{L}_{i}}$ and
$\mathcal{T}^B$ is a family of ray tubes in $\Gamma_{B'}$, a
projective $B$-module $P(x)$ is also a projective $B'$-module.
Therefore, if $\eta: x\rightarrow y$ in $Q_{B'}$ then $\eta$
belongs to $Q_B$, a contradiction with an assumption imposed on
$\eta$. If now $y\in Q_{\mathcal{L}_{i}}$, for some
$i\in\{1,...,s\}$, then there exists a homomorphism
$f:P(y)\rightarrow P(x)$ in $\mod B'$ given by the formula
$f(-)=\eta\cdot -$.
Hence $Im f=\eta e_{y}B'=\eta eAe$, because $\mathcal{T}^B$ is a family of ray tubes in $\Gamma_{eAe}$.
Therefore $\eta eAe\subseteq eJ$, contradiction with the assumption that $\eta\in Q_{B'}$. \\
\noindent (2) Let now $x \in Q_C$ and $y \in
Q_C\backslash\{\omega\}$.  Assume $y=(i,k)$ for some
$i\in\{1,...,m\}$ and $k\in\{1,...,p_{i}-1\}$. Then $S(i,k)$ has
in $\mod B''$ a minimal injective presentation of the form
$$0\rightarrow S(i,k)\rightarrow I(i,k)\rightarrow I(i,k+1)\oplus I(x)\oplus I(0^+),$$
where $I(0^+)=0$ if there is no branch extension of $C$ at
$S(i,k)$. Then using the quasi-inverse $\nu^{-1}_{B''}$ of the
Nakayama functor $\nu_{B''}$, we obtain the following exact
sequence
$$0\rightarrow P(i,k)\rightarrow
P(i,k+1)\oplus P(x)\oplus P(0^+)
\rightarrow\tau^{-}_{B''}S(i,k)\rightarrow 0,$$ where we assume
that $P(0^+)=0$ if $I(0^+)=0$. Then the socle of
$\tau^{-}_{B''}S(i,k)$ contains an additional direct summand
$S(x)$. Therefore, $\tau^{-}_{B''}S(i,k) \neq \tau^{-}_{B}S(i,k)$,
and hence $\mathcal{T}_B$ is not a family of ray tubes in
$\Gamma_{B''}$ and neither  is in $\Gamma_{B'}$, a contradiction.
In the case of $y=0$, we repeat the above arguments for a
nonsimple module $F_i$ from the mouth of
a stable tube from $\mathcal{T}^C$.\\
\noindent (3) Assume that $x=(i,k)$, for some $i\in\{1,...,m\}$
and $k\in\{1,...,p_{i}-1\}$, and $y=\omega$. Then we show
analogously that $\tau^-_{B''}(\tau_BS(x))\neq S(x)$ since
$B''$-module $\tau^-_{B''}(\tau_BS(x))$ contains $\eta B$ as a
submodule. Again, we get a contradiction.\\
\noindent (4) Let $\eta: \omega \rightarrow \omega$. By (1), (2)
and (3) we conclude that
$\eta,\alpha_{1,p_{1}},...,\alpha_{m,p_{m}}$ are all arrows that
start at $\omega$ in $Q_{eAe}$. Denote by $\varrho_i$ the path
$\alpha_{i,p_i}...\alpha_{i,2}\alpha_{i,1}$, for any
$i\in\{1,...,m\}$. Observe that $\eta\varrho_i$ belongs to the
$K$-vector space $e_{\omega}Ae_{0}$ generated by $\varrho_1$ and
$\varrho_2$, otherwise using the canonical restriction functor
$\res_{e'}:\mod eAe\rightarrow\mod e'Ae'$ for
$e'=e_{\omega}+e_{0}$, we obtain that $e'Ae'$ is a wild algebra
(see arguments from the proof of Theorem 2.2 and
\cite[(XVIII.1.6)]{SS2}). Assume $\eta\varrho_i \neq 0$ in $B''$,
for some $i \in \{1,\ldots ,m\}$. Then $\eta\varrho_i
=a_1\varrho_1+a_2\varrho_2$ for some $a_1,a_2 \in K$ such that
$a_1^2+a_2^2>0$. Consider the nonsimple module $E^{(0)}$ from the
mouth of the stable tube $\mathcal{T}^C_0$ from the family
$\mathcal{T}^C$ (see Section 2). Note that $E^{(0)}_{\varrho_2}=0$
and hence $E^{(0)}_{a_1\varrho_1}=E^{(0)}_{\eta\varrho_i} =0$. But
$E^{(0)}_{a_1\varrho_1}(e_{\omega})=a_1\varrho_1 \neq 0$ and we
conclude that $\eta\varrho_i=0$ for any $i \in \{1, \ldots, m\}$.
Let now $k_i\in \{0,1,\ldots, p_i\}$ be the minimal integer such
that $\eta\alpha_{i, p_i}\ldots \alpha_{i,k_i} \neq 0$ in $B''$
where we put $\alpha_{i,p_i}\ldots\alpha_{i,0}=e_{\omega}$. Then,
for any $i\in \{1, \ldots, m \}$, there exists a nonzero
homomorphism $f_i:S(i,k_i-1)[p_i-k_i+1] \rightarrow P(\omega)$ in
$\mod eAe$, where $S(i,k_i-1)[p_i-k_i+1]$ is a module of
$\mathcal{T}^C$-length $p_i-k_i+1$ lying on a ray starting at
mouth module $S(i, k_i-1)$ in $\mathcal{T}^C$. Note that $Im
f_i=\eta eAe$. Thus $\eta eAe \subset eJ$ and $\eta \notin
Q_{B'}$.\\
\noindent (5) Let now $\eta_1, \eta_2, \ldots, \eta_r$, for some
$r \geq 1$, be all arrows in $Q_{B'}$ which start at $0$ and end
in $\omega$.

Consider the Galois covering $F: \widetilde{B'}\rightarrow B'$
with an infinite cyclic group $\mathbb{Z}$. Then $\widetilde{B'}$
is a locally bounded $K$-category and it follows from
\cite[Section 2]{BG} that $\widetilde{B'}\cong
KQ_{\widetilde{B'}}/ I_{\widetilde{B'}}$, where
$Q_{\widetilde{B'}}$ is a connected, locally finite, acyclic
quiver with $I_{\widetilde{B'}}$ an admissible ideal of the path
category $KQ_{\widetilde{B'}}$ of $Q_{\widetilde{B'}}$. Thus
quiver $\widetilde{Q_{B'}}$ is of the form \unitlength1cm
\begin{center}
\begin{picture}(13,3)

\put(-1,0.7){$\cdots$} \put(13.1,0.7){$\cdots$}
\put(1,1.7){$Q_{B_{-1}}$}
\put(6,1.7){$Q_{B_{0}}$}
\put(11,1.7){$Q_{B_{1}}$}
\multiput(4.9,1.1)(5,0){2}{\vector(-1,0){2.25}}
\multiput(4.9,0.5)(5,0){2}{\vector(-1,0){2.25}}
\put(3.8,0.6){$\vdots$} \put(8.8,0.6){$\vdots$}

\put(3.5,1.25){$\scriptsize{\eta_{0,1}}$}
\put(3.5,0.15){$\scriptsize{\eta_{0,r}}$}

\put(8.5,1.25){$\scriptsize{\eta_{1,1}}$}
\put(8.5,0.15){$\scriptsize{\eta_{1,r}}$}

\multiput(-0.1,0.63)(2.5,0){6}{$\bullet$}
\put(2.3,0.16){$\scriptsize{\omega_{-1}}$}
\put(7.4,0.16){$\scriptsize{\omega_{0}}$}
\put(12.5,0.16){$\scriptsize{\omega_{1}}$}
\put(-0.3,0.16){$\scriptsize{0_{-1}}$}
\put(4.95,0.16){$\scriptsize{0_0}$}
\put(9.95,0.16){$\scriptsize{0_1}$}

\qbezier(0,0.68)(1.25,2.5)(2.5,0.68)
\qbezier(0.05,0.68)(1.25,-1.1)(2.45,0.68)

\qbezier(5,0.68)(6.25,2.5)(7.5,0.68)
\qbezier(5.05,0.68)(6.25,-1.1)(7.45,0.68)

\qbezier(10,0.68)(11.25,2.5)(12.5,0.68)
\qbezier(10.05,0.68)(11.25,-1.1)(12.45,0.68)

\end{picture}
\end{center}
where, for all $k \in \mathbb{Z}$ and $1 \leq i \leq n$, we have
$Q_{B_k}=Q_B$, $\eta_{k,i}:0_k \rightarrow \omega_{k-1}$,  and the
generators of $I_{B_k}$ belong to the set generators of
$I_{\widetilde{B'}}$. By $B'_-$ we denote the full subcategory of
$\widetilde{B'}$ whose objects are the objects of $B_k$ for all
integer $k\leq 0 $. Then the Auslander-Reiten quiver $B'_-$ has a
form
$$\Gamma_{B'_-}= \mathcal{P}^{B'_-} \cup \mathcal{T}^{B_0} \cup \mathcal{Q}^{B_0},$$
where $\mathcal{P}^{B'_-}$ is a family of components containing all indecomposable modules $X$ such that $\res_e(X)$, for $e$ being the residual identity of $B_0$,
is zero or belongs to the family $\mathcal{P}^{B_0}$ of $\Gamma_{B_0}$, $\mathcal{T}^{B_0}=\mathcal{T}^{B}$ and $\mathcal{Q}^{B_0}=\mathcal{Q}^{B}$.
By $P$  we denote the projective $\widetilde{B'}$-module $P_{\widetilde{B'}}(0_1)$  at the vertex $0_1$ (vertex $0$ in $Q_{B_1}$).
Let now $R=B'_-[\rad P]$ be a one-point extension of $B'_-$ by the radical of $P$.
Since $\mathcal{T}^{B_0}$ remains a family of ray tubes in $\Gamma_R$ (and in $\Gamma_{\widetilde{B'}}$),
we conclude that $\rad P \in \add \mathcal{Q}^{B_0}$ (see \cite[Theorem XV.1.6]{SS2}). Consider the projective cover $f:P' \rightarrow \rad P$ of $\rad P$ in $\mod B_0$. Then $f$ factorizes through the additive subcategory $\add\mathcal{T}^{B_0}$ of $\mod B_0$, because $\mathcal{T}^{B_0}$  is a separating family of ray tubes in $\Gamma_{B_0}$. Therefore, there is a module $M \in \add \mathcal{T}^{B_0}$ and an epimorphism $h: M \rightarrow \rad P$ in $\mod \widetilde{B'}$. Further, there exists an epimorphism $h': M \rightarrow \rad P$ in $\mod B'$ because for the push-down functor $F_{\lambda}: \mod \widetilde{B'} \rightarrow \mod B' $ we have $F_{\lambda}(M)=M$ and $F_{\lambda}(\rad P)=\rad P$. Observe that by (1)-(4), $P$ is a projective $eAe$-module. Hence $\Im h'=\sum_{i=1}^r \eta_ieAe$ and $\eta_1, \ldots, \eta_r \in eJ$, a contradiction.\\

To sum up, we obtain that $eIe=eJ$. Hence
$\mathcal{T}^B$ is faithful generalized standard family of ray
tubes in $\mod B'$ because $eIe/eJ=\ann_{B'}(\mathcal{T}^B)=0$. We
show analogously that $eIe=J'e$. Applying now Proposition
\ref{prop 6.2}, we have $(eIe)^2=eJeeIe=eJeIe=(eJe)Ie=eJIe=0$.

\end{proof}
 We note that although Proposition \ref{prop 6.3} is
the analogue of Lemma 7.3 in \cite{KaSY} their proofs are
different because the family $\mathcal{C}$ of quasi-tubes is
assumed only to be generalized standard whereas in \cite{KaSY} the
quasi-tubes in $\mathcal{C}$ consist of modules which do not lie
on infinite short cycles. But having Proposition 6.3, we may
proceed as in \cite[Section 7]{KaSY}, and, using Lemmas 7.4-7.12
of \cite{KaSY}, prove the following analogue of \cite[Proposition
7.13]{KaSY}.

\begin{prop} \label{prop 6.4}
We have $Ie=J$, $eI=J'$, and $eIe=J \cap J'$.
\end{prop}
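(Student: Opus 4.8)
The plan is to reduce the whole proposition to the single identity $IeI=0$, and then to establish that identity by a Peirce decomposition argument combined with the structural analysis of the branch extension $B=A/I$, proceeding as in \cite[Lemmas 7.4--7.12]{KaSY}. Let me first record the reduction. Put $f=1_A-e$, so that $f\in I$ because $e$ is a residual identity of $B=A/I$. By Proposition \ref{prop 6.2} we have $\ell_A(I)=J$ and $r_A(I)=J'$; since $f\in I$ this forces $Jf=0$ and $fJ'=0$, that is $J=Je$ and $J'=eJ'$, while the inclusion $J\cup J'\subseteq I$ established above gives $J=Je\subseteq Ie$ and $J'=eJ'\subseteq eI$. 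Now if $IeI=0$, then $(Ie)I=IeI=0$, so $Ie\subseteq\ell_A(I)=J$, whence $Ie=J$; dually $I(eI)=0$ gives $eI\subseteq r_A(I)=J'$, whence $eI=J'$. Finally, as $f\in I$ the Peirce decomposition $A=eAe\oplus eAf\oplus fAe\oplus fAf$ restricts to $I=eIe\oplus eAf\oplus fAe\oplus fAf$, so that $Ie=eIe\oplus fAe$ and $eI=eIe\oplus eAf$, and therefore $J\cap J'=Ie\cap eI=eIe$. Hence all three equalities follow once $IeI=0$ is known.

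It remains to prove $IeI=0$. Using the Peirce decomposition of $I$ together with $ef=fe=0$ and $(eIe)^2=0$ from Proposition \ref{prop 6.3}, a direct computation gives
$$IeI=eIeAf+fAeIe+fAeAf,$$
so it suffices to show that each of $eIeAf$, $fAeIe$, $fAeAf$ vanishes. This is the technical core, and here the concrete shape of $B$ is used: $B$ is a branch $\mathcal{T}^{C}$-extension of the canonical algebra $C$, with $Q_B$ obtained from $Q_C$ by attaching the branch quivers $Q_{\mathcal{L}^+_1},\dots,Q_{\mathcal{L}^+_s}$ at the vertices $0^+_i$ whose radicals $\rad P(0^+_i)$ are mouth modules of the canonical tubes of $\mathcal{T}^{C}$, and $\mathcal{T}^{B}$ is a sincere generalized standard family of ray tubes both in $\Gamma_{eAe}$ and, because $eIe=eJe$, in $\Gamma_{B'}$ for $B'=eAe/eJe$. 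I would then follow \cite[Lemmas 7.4--7.12]{KaSY}: analysing block by block, along the branches and along $Q_C$, the projective $A$-modules at the vertices of $Q_A$ outside $Q_B$ and their radicals inside the family $\mathcal{C}$, and using the sincerity and generalized standardness of $\mathcal{T}^{B}$, the selfinjectivity of $A$, the Nakayama functor on $B'$, and the identities $eIe=eJe=eJ'e$ of Proposition \ref{prop 6.3} in place of the ``no infinite short cycle'' hypothesis of \cite{KaSY}, one shows that any nonzero path of $A$ factoring through $e$ so as to contribute to one of the three summands would force $\mathcal{T}^{B}$ to stop being a family of ray tubes, or to violate generalized standardness, a contradiction. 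Hence $IeI=0$.

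The hard part will be precisely this vanishing $IeI=0$, and within it the control of the three mixed Peirce components. The ``diagonal'' contribution $(eIe)^2$ disappears for free, but excluding the interaction of the deformation part $eIe$ with the off-diagonal blocks $eAf$ and $fAe$, as well as the interaction of $fAe$ with $eAf$, requires the full case distinction over the branches $\mathcal{L}^+_i$ and over the positions at which they are glued to $Q_C$, carried out as in \cite[Section 7]{KaSY}; the role of Proposition \ref{prop 6.3} is to furnish the substitute that makes these arguments run under the weaker assumption of generalized standardness alone. Once the proposition is proved, Proposition \ref{prop 4.4} applies via the verified condition $\ell_A(I)=Ie$, so that $I$ is a deforming ideal of $A$ with $\ell_A(I)=Ie$, $r_A(I)=eI$ and $\soc A\subseteq I$, which is exactly what is needed in order to invoke Theorem \ref{SY}.
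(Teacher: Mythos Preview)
Your proposal is correct and aligns with the paper's own approach: the paper also defers to \cite[Lemmas 7.4--7.12]{KaSY}, with Proposition \ref{prop 6.3} serving as the substitute for the ``no infinite short cycle'' hypothesis. Your explicit reduction to $IeI=0$ via the Peirce decomposition is a clean reorganization; in the paper the logical order is reversed (the equalities $Ie=J$, $eI=J'$ are obtained first via the \cite{KaSY} lemmas, and $IeI=0$ is then read off in the next proposition), but the two presentations are equivalent given Proposition \ref{prop 6.2}, and the substantive work in both cases is the same block-by-block analysis from \cite[Section 7]{KaSY}.
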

This allows us to prove the desired proposition.
\begin{prop}
$I$ is a deforming ideal of $A$ with $\ell_A(I)=Ie$ and $r_A(I)=eI$.
\end{prop}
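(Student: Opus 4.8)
The plan is to obtain the assertion formally from Propositions~\ref{prop 6.2}, \ref{prop 6.4} and~\ref{prop 4.4}; essentially all the substance has already been established in those results, so what remains is bookkeeping. First I would record the two annihilator equalities. By Proposition~\ref{prop 6.2} we have $\ell_A(I)=J$ and $r_A(I)=J'$, while Proposition~\ref{prop 6.4} gives $Ie=J$ and $eI=J'$; comparing these yields at once $\ell_A(I)=Ie$ and $r_A(I)=eI$, which is one half of the statement.

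Next I would verify the hypothesis $IeI=0$ needed to apply Proposition~\ref{prop 4.4}. Since $e$ is idempotent one has $IeI=(Ie)I$, and by the previous step $Ie=J=\ell_A(I)$, so $IeI=(Ie)I\subseteq\ell_A(I)\cdot I=0$. Recalling that $e$ was chosen to be a residual identity of $B=A/I$, Proposition~\ref{prop 4.4} is applicable; since its condition (iii), namely $\ell_A(I)=Ie$, already holds, all of the equivalent conditions there are satisfied, and in particular the final part of that proposition gives $eIe=\ell_{eAe}(I)=r_{eAe}(I)$.

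Finally I would observe that $A/I=B$ is triangular, because $B$ is a branch extension of the canonical algebra $C$ and hence its Gabriel quiver $Q_B=Q_{A/I}$ is acyclic, exactly as already noted when checking condition (2)(ii) of Theorem~\ref{SY}. Putting $eIe=\ell_{eAe}(I)=r_{eAe}(I)$ together with the triangularity of $A/I$, the definition in \cite[(2.1)]{SY3} shows that $I$ is a deforming ideal of $A$; combined with the first step this proves the proposition.

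The only point that demands any care in this argument is the identity $IeI=0$, which is what makes Proposition~\ref{prop 4.4} available; the genuine work lies upstream, in Propositions~\ref{prop 6.3} and~\ref{prop 6.4}, and once those are in hand the present statement is a short formal consequence.
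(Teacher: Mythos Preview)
Your proposal is correct and follows essentially the same route as the paper's own proof: combine Propositions~\ref{prop 6.2} and~\ref{prop 6.4} to get $\ell_A(I)=Ie$ and $r_A(I)=eI$, deduce $IeI=0$, apply Proposition~\ref{prop 4.4} to obtain $eIe=\ell_{eAe}(I)=r_{eAe}(I)$, and invoke the acyclicity of $Q_B$ to conclude that $I$ is deforming. If anything, your justification of $IeI=0$ via $Ie=\ell_A(I)$ is slightly more explicit than the paper's ``in particular''.
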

\begin{proof}
From Proposition \ref{prop 6.2} and  \ref{prop 6.4} we know that
$\ell_A(I)=J=Ie$ and $r_A(I)=J'=eI$. In particular, we have
$IeI=0$. Therefore, applying Proposition \ref{prop 4.4}, we get
$eIe=\ell_{eAe}(I)= r_{eAe}(I)$. Since $Q_{A/I}=Q_B$ is acyclic,
this shows that $I$ is a deforming ideal of $A$.
\end{proof}

We complete now the proof of implication  (i) $\Rightarrow$ (ii)
of Theorem 1.1. Since the ideal $I$ and the idempotent $e$ satisfy
condition (2) in Theorem 4.5, we conclude that $A$ is isomorphic
to an orbit algebra $\widehat{B}/ (\varphi\nu_{\widehat{B}})$,
where $\varphi$ is a positive automorphism of $\widehat{B}$.
Finally, applying Proposition \ref{prop2}  we infer that either
$\varphi$ is strictly positive or $\varphi$ is rigid and $B$ is a
canonical algebra, as required in (ii).

\section{Examples}

The following examples illustrate the statements of Theorem 1.1
and Corollary 1.2.\\

\noindent{\scshape{Example 7.1.}}
Let $B=KQ_B/I_B$, where $Q_B$ is the quiver

\unitlength1cm
\begin{center}
\begin{picture}(7,4)
\multiput(1,0.5)(1,-1){1}{$\circ$}
\multiput(2,0.5)(0,1.5){3}{$\circ$}
\multiput(3.5,2)(1.5,0){3}{$\circ$}
\multiput(4.5,0.5)(1,1){1}{$\circ$}
\multiput(5,3.5)(1,1){1}{$\circ$}
\multiput(5.5,0.5)(1,1){1}{$\circ$}

\multiput(3.45,2.1)(1.5,0){3}{\vector(-1,0){1.2}}
\multiput(1.95,0.6)(1.5,0){1}{\vector(-1,0){0.7}}
\multiput(5.45,0.6)(1.5,0){1}{\vector(-1,0){0.7}}

\multiput(3.45,2.25)(3,0){2}{\vector(-1,1){1.2}}

\multiput(3.45,1.95)(1.5,1.5){2}{\vector(-1,-1){1.2}}

\multiput(4.45,0.75)(1.5,1.5){1}{\vector(-2,3){0.8}}
\multiput(6.45,1.95)(1.5,1.5){1}{\vector(-2,-3){0.8}}

\multiput(1.8,3.5)(0,2){1}{\scriptsize$1$}
\multiput(1.8,2)(0,2){1}{\scriptsize$2$}
\multiput(1,0.3)(0,2){1}{\scriptsize$3$}
\multiput(2,0.3)(0,2){1}{\scriptsize$4$}
\multiput(3.5,2.3)(0,2){1}{\scriptsize$5$}
\multiput(5,3.7)(0,2){1}{\scriptsize$6$}
\multiput(5,2.2)(0,2){1}{\scriptsize$7$}
\multiput(4.5,0.3)(0,2){1}{\scriptsize$8$}
\multiput(5.5,0.3)(0,2){1}{\scriptsize$9$}
\multiput(6.7,2)(0,2){1}{\scriptsize$10$}

\put(4,2.9){\scriptsize$\alpha_1$}
\put(5.8,2.9){\scriptsize$\alpha_2$}
\put(4.2,2.25){\scriptsize$\beta_1$}
\put(5.65,2.25){\scriptsize$\beta_2$}
\put(4.1,1.3){\scriptsize$\gamma_1$}
\put(5,0.7){\scriptsize$\gamma_2$}
\put(5.7,1.3){\scriptsize$\gamma_3$}
\put(2.8,2.9){\scriptsize$\sigma_1$}
\put(1.6,0.7){\scriptsize$\eta_1$}
\put(2.9,1.2){\scriptsize$\eta_2$}
\put(2.6,2.2){\scriptsize$\xi_1$}
\end{picture}
\end{center}
\noindent and $I_B$ is the ideal of the path algebra $KQ_B$ of $Q_B$ generated
 by the elements $\alpha_1\sigma_1$, $\beta_1\xi_1$, $\gamma_1\eta_2$,  $\gamma_3\gamma_2\gamma_1+\alpha_2\alpha_1+\beta_2\beta_1$.
 Denote by $C$ the bound quiver algebra $C=KQ_C/I_C$, where $Q_C$ is the full subquiver of $Q$ given by the vertices
 $5$, $6$, $7$, $8$, $9$, $10$, and $I_C$ is the ideal in the path algebra $KQ_C$ of $Q_C$ generated by
 $\gamma_3\gamma_2\gamma_1+\alpha_2\alpha_1+\beta_2\beta_1$. Then $C$ is the canonical algebra $C(\hbox{\boldmath$p$}, \hbox{\boldmath$\lambda$})$
 with the weight sequence $\hbox{\boldmath$p$}=(2,2,3)$ and the parameter sequence $\hbox{\boldmath$\lambda$}=(\infty,0,1)$.
 Further, $B$ is the branch coextension of $C$ in the sense of \cite[XV.3]{SS2}. Namely, $B=[E_1,\mathcal{L}_1,E_2,\mathcal{L}_2,E_3,\mathcal{L}_3]C$ with $E_1=E^{(\infty)}\in\mathcal{T}^C_{\infty}$, $E_2=E^{(0)}\in\mathcal{T}^C_{0}$, $E_3=E^{(1)}\in\mathcal{T}^C_{1}$,  $\mathcal{L}_1$ the branch given by the vertex $1$, $\mathcal{L}_2$ the branch given by the vertex $2$, $\mathcal{L}_3$ the branch given by the vertices $3,\,4$ and the arrow $\eta_1$. Consider the repetitive algebra $\widehat{B}$ of $B$. Then there exists a  strictly positive automorphism $\varphi_{\widehat{B}}$ of $\widehat{B}$, with $\varphi_{\widehat{B}}^2=\nu_{\widehat{B}}$ such that, for any $k \in \mathbb{Z}$, $\varphi_{\hat{B}}(e_{k,l})=e_{k,l+5}$, if $1 \leq l\leq 5$ and
$\varphi_{\hat{B}}(e_{k,l})=e_{k+1,l-5}=\nu_{\widehat{B}}(e_{k,l-5})$,
if $6 \leq l \leq 10$. Denote by $A$ the orbit algebra
$\widehat{B}/(\varphi_{\widehat{B}})$. Then  $A$ is the bound
quiver algebra $A=KQ/I$, where $Q$ is the quiver \unitlength1.1cm
\begin{center}
\begin{picture}(3,3)
\multiput(0.5,0.5)(1,1){3}{$\circ$}
\multiput(2.5,0.5)(2,0){1}{$\circ$}
\multiput(0.5,2.5)(0,1.5){1}{$\circ$}

\multiput(0.75,0.6)(1.5,0){1}{\vector(1,0){1.7}}

\multiput(1.45,1.45)(1,1){1}{\vector(-1,-1){0.7}}
\multiput(2.4,2.5)(1,1){1}{\vector(-1,-1){0.7}}
\multiput(1.8,1.7)(1,1){1}{\vector(1,1){0.7}}

\multiput(2.45,0.75)(1,1){1}{\vector(-1,1){0.7}}
\multiput(1.5,1.8)(1,1){1}{\vector(-1,1){0.7}}
\multiput(0.7,2.4)(1,1){1}{\vector(1,-1){0.7}}

\multiput(0.5,2.75)(0,2){1}{\scriptsize$1$}
\multiput(2.5,2.75)(0,2){1}{\scriptsize$2$}
\multiput(1.5,1.2)(0,2){1}{\scriptsize$5$}
\multiput(2.5,0.2)(0,2){1}{\scriptsize$8$}
\multiput(0.5,0.2)(0,2){1}{\scriptsize$9$}

\put(0.74,1.9){\scriptsize$\alpha_1$}
\put(1.15,2.2){\scriptsize$\alpha_2$}
\put(1.75,2.2){\scriptsize$\beta_1$}
\put(2.2,1.9){\scriptsize$\beta_2$}
\put(1.4,0.35){\scriptsize$\gamma_2$}
\put(2.2,1.1){\scriptsize$\gamma_1$}
\put(0.7,1.1){\scriptsize$\gamma_3$}

\end{picture}
\end{center}

\noindent and $I$ is the ideal of $KQ$ generated by the elements $\gamma_3\gamma_2\gamma_1+\alpha_2\alpha_1+\beta_2\beta_1, \alpha_1\alpha_2, \beta_1\beta_2,\gamma_1\gamma_3,$
$\alpha_2\alpha_1\beta_2\beta_1-\beta_2\beta_1\alpha_2\alpha_1$. Note that $A$ is a symmetric algebra but not a trivial extension of $C$. Hence from Corollary \ref{1.2}, $\Gamma_A$ does not admit a generalized standard family of quasi-tubes maximally saturated by simple and projective modules. Indeed, by \cite[Theorem 5.3]{KS1} $\Gamma_A$ has a decomposition
$$\Gamma_A=\mathcal{X}^A\vee \mathcal{C}^A $$
where
$\mathcal{C}^A=(\mathcal{C}^A(\lambda))_{\lambda\in\mathbb{P}_1(K)}$
is the unique $\mathbb{P}_1(K)$-family of quasi-tubes of
$\Gamma_A$ containing all simple modules and indecomposable
projective modules, except the simple module $S(5)$ and the
projective module $P(5)$ at the vertex 5, which belong to
$\mathcal{X}^A$. For $\lambda=1$ form the parameter sequence
consider the module $E^{(1)}$ from the mouth of a stable tube
$\mathcal{T}^C_1$ in $\Gamma_C$. Then $E^{(1)}$ belongs to
$\mathcal{C}^A(1)$ and there is a nonzero homomorphism $f:
E^{(1)}\rightarrow E^{(1)}$ which factors through $\soc E^{(1)}
\cong S(5) \cong \top E^{(1)}$.
Thus $f \in \rad^{\infty}(\mathcal{C}^{A})$.\\

\noindent{\scshape{Example 7.2.}} Consider the selfinjective
algebra $A=\widehat{B}/(\varphi_{\widehat{B}}^3)$, where $B$ and
$\varphi_{\widehat{B}}$ are as above. Then $A=KQ/I$ is the bound
quiver algebra, where $Q$ is the quiver \unitlength1cm
\begin{center}
\begin{picture}(3,4)
\multiput(6.5,2)(-9,0){2}{$\bullet$}
\multiput(5,2)(-1.5,0){5}{$\circ$}
\multiput(5,3.5)(-3,0){3}{$\circ$}
\multiput(5.5,0.5)(-1,0){2}{$\circ$}
\multiput(5.5,0.5)(-1,0){2}{$\circ$}
\multiput(2.5,0.5)(-1,0){2}{$\circ$}
\multiput(-0.5,0.5)(-1,0){2}{$\circ$}

\put(6.6,2.3){\scriptsize{$2$}}
\put(3.6,2.3){\scriptsize{$1$}}
\put(0.6,2.3){\scriptsize{$0$}}
\put(-2.53,2.3){\scriptsize{$2$}}

\multiput(5.8,3)(-3,0){3}{\scriptsize{$\alpha_2$}}
\multiput(4.1,3)(-3,0){3}{\scriptsize{$\alpha_1$}}
\multiput(5.7,2.2)(-3,0){3}{\scriptsize{$\beta_2$}}
\multiput(4.2,2.2)(-3,0){3}{\scriptsize{$\beta_1$}}
\multiput(6.1,1.2)(-3,0){3}{\scriptsize{$\gamma_3$}}
\multiput(5,0.7)(-3,0){3}{\scriptsize{$\gamma_2$}}
\multiput(3.8,1.2)(-3,0){3}{\scriptsize{$\gamma_1$}}

\multiput(6.5,2.2)(-3,0){3}{\vector(-1,1){1.3}}
\multiput(6.5,2)(-3,0){3}{\vector(-2,-3){0.85}}
\multiput(6.45,2.1)(-1.5,0){6}{\vector(-1,0){1.2}}
\multiput(5.45,0.6)(-3,0){3}{\vector(-1,0){0.7}}
\multiput(4.5,0.7)(-3,0){3}{\vector(-2,3){0.85}}
\multiput(5,3.5)(-3,0){3}{\vector(-1,-1){1.3}}

\end{picture}
\end{center}
where we identify two vertices  denoted by $\bullet$ and $I$ is the ideal of $KQ$ generated by the elements $\gamma_3\gamma_2\gamma_1+\alpha_2\alpha_1+\beta_2\beta_1, \alpha_1\alpha_2, \beta_1\beta_2,\gamma_1\gamma_3$, $\alpha_2\alpha_1\beta_2\beta_1-\beta_2\beta_1\alpha_2\alpha_1$. Again, by \cite[Theorem 5.3]{KS1} $\Gamma_A$ has a decomposition
$$\Gamma_A=\mathcal{X}^A_0\vee \mathcal{C}^A_0 \vee \mathcal{X}^A_1\vee \mathcal{C}^A_1 \vee \mathcal{X}^A_2\vee \mathcal{C}^A_2$$
where, for each $0\leq i \leq 2$,
$\mathcal{C}^A_i=(\mathcal{C}^A_i(\lambda))_{\lambda\in\mathbb{P}_1(K)}$
is the $\mathbb{P}_1(K)$-family of quasi-tubes of $\Gamma_A$,
$\mathcal{X}^A_i$ is a family of components containing exactly one
simple module $S(i)$. Since $\varphi^3_{\widehat{B
}}=(\varphi^2_{\widehat{B}})\varphi_{\widehat{B}}=\nu_{\widehat{B}}\varphi_{\widehat{B}}$
and $\varphi_{\widehat{B}}$ is strictly positive automorphism of
$\widehat{B}$, by Theorem 1.1 we get that $\mathcal{C}^A_i$, for
some $0 \leq i \leq 2$, is generalized standard family of
quasi-tubes maximally saturated by simple and projective modules.
Further, applying Theorem 5.1 (see also \cite[Theorem 5.1]{KS1}),
we have that $\mathcal{C}^A_i$, for each $0\leq i\leq 2$, is a
canonical family of quasi-tubes of $\Gamma_{B^*_i}$, where $B^*_i$
is a quasi-tube enlargement of canonical algebra $C_i=C$. Note
that $B^*_i=KQ_{B^*_i}/I_{B^*_i}$, where $Q_{B^*_i}$ is of the
form \unitlength1cm
\begin{center}
\begin{picture}(3,4)

\multiput(5,2)(-1.5,0){5}{$\circ$}
\multiput(5,3.5)(-3,0){3}{$\circ$}
\multiput(5.5,0.5)(-1,0){2}{$\circ$}
\multiput(5.5,0.5)(-1,0){2}{$\circ$}
\multiput(2.5,0.5)(-1,0){2}{$\circ$}
\multiput(-0.5,0.5)(-1,0){2}{$\circ$}

\put(3.56,2.3){{$j$}} \put(0.6,2.3){{$i$}}

\multiput(2.8,3)(-3,0){2}{\scriptsize{$\alpha_2$}}
\multiput(4.1,3)(-3,0){2}{\scriptsize{$\alpha_1$}}
\multiput(2.7,2.2)(-3,0){2}{\scriptsize{$\beta_2$}}
\multiput(4.2,2.2)(-3,0){2}{\scriptsize{$\beta_1$}}
\multiput(5,0.7)(-3,0){3}{\scriptsize{$\gamma_2$}}
\multiput(3.8,1.2)(-3,0){2}{\scriptsize{$\gamma_1$}}
\multiput(3.1,1.2)(-3,0){2}{\scriptsize{$\gamma_3$}}

\multiput(3.5,2.2)(-3,0){2}{\vector(-1,1){1.3}}
\multiput(3.5,2)(-3,0){2}{\vector(-2,-3){0.85}}
\multiput(4.95,2.1)(-1.5,0){4}{\vector(-1,0){1.2}}
\multiput(5.45,0.6)(-3,0){3}{\vector(-1,0){0.7}}
\multiput(4.5,0.7)(-3,0){2}{\vector(-2,3){0.85}}
\multiput(5,3.5)(-3,0){2}{\vector(-1,-1){1.3}}
\end{picture}
\end{center}
and $I_{B^*_{i}}$ is the ideal of $KQ_{B^*_i}$ generated by
$\gamma_3\gamma_2\gamma_1+\alpha_2\alpha_1+\beta_2\beta_1, \alpha_1\alpha_2, \beta_1\beta_2,\gamma_1\gamma_3$ and $j\equiv i+1 (\mod 3)$. In conclusion, $\Gamma_A$ admits three generalized standard families $\mathcal{C}^A_i$, $0\leq i \leq 2$, of quasi-tubes maximally saturated by simple and projective modules.\\

\noindent{\scshape{Example 7.3.}} Let
$A=\widehat{B}/(\nu_{\widehat{B}})$ be a selfinjective algebra,
where $B=C$ is the canonical algebra as above. Then $A=KQ/I$ is
the bound quiver algebra, where $Q$ is the quiver \unitlength1.3cm
\begin{center}
\begin{picture}(3,2.5)
\multiput(3.5,1)(-1,0){4}{$\circ$}
\multiput(2,2.5)(-3,0){1}{$\circ$}
\multiput(2,1.7)(-1,0){1}{$\circ$}

\put(3.6,1.3){\scriptsize{$1$}}
\put(0.5,1.3){\scriptsize{$0$}}

\multiput(2.8,2)(-3,0){1}{\scriptsize{$\alpha_2$}}
\multiput(1.1,2)(-3,0){1}{\scriptsize{$\alpha_1$}}
\multiput(2.7,1.6)(-3,0){1}{\scriptsize{$\beta_2$}}
\multiput(1.3,1.6)(-3,0){1}{\scriptsize{$\beta_1$}}
\multiput(3,0.9)(-3,0){1}{\scriptsize{$\gamma_3$}}
\multiput(2,0.9)(-3,0){1}{\scriptsize{$\gamma_2$}}
\multiput(1,0.9)(-3,0){1}{\scriptsize{$\gamma_1$}}
\multiput(2,0.4)(-3,0){1}{\scriptsize{$\eta_1$}}
\multiput(2,-0.1)(-3,0){1}{\scriptsize{$\eta_2$}}

\multiput(3.5,1.2)(-3,0){1}{\vector(-1,1){1.3}}
\multiput(3.45,1.16)(-3,0){1}{\vector(-2,1){1.2}}
\multiput(1.9,1.75)(-3,0){1}{\vector(-2,-1){1.1}}

\multiput(3.4,1.1)(-1,0){3}{\vector(-1,0){0.7}}
\multiput(2,2.5)(-3,0){1}{\vector(-1,-1){1.3}}
\bezier{700}(0.65,1)(2.1,0.2)(3.5,1)
\bezier{700}(0.65,0.9)(2.1,-0.8)(3.5,0.9)
\put(3.4,0.93){\vector(3,2){0.1}}
\put(3.42,0.8){\vector(3,4){0.1}}
\end{picture}
\end{center}
and $I$ is the ideal of $KQ$ generated by
$\gamma_3\gamma_2\gamma_1+\alpha_2\alpha_1+\beta_2\beta_1,
\eta_1\alpha_2, \eta_2\beta_2, \alpha_1\eta_1, \beta_1\eta_2,
\eta_1\beta_2\beta_1-\eta_2\alpha_2\alpha_1,
\alpha_2\alpha_1\eta_2-\beta_2\beta_1\eta_1$. Following Theorem
5.1 and \cite[Theorem 5.3]{KS1}, $\Gamma_A$ has a decomposition
$$\Gamma_A=\mathcal{X}^A_0\vee \mathcal{C}^A_0 \vee
\mathcal{X}^A_1\vee \mathcal{C}^A_1$$ where, for $i \in\{0,1\}$,
$\mathcal{C}^A_i=(\mathcal{C}^A_i(\lambda))_{\lambda\in\mathbb{P}_1(K)}$
is the $\mathbb{P}_1(K)$-family of quasi-tubes of $\Gamma_A$,
$\mathcal{X}^A_i$ is a family of components containing exactly one
simple module $S(i)$. Further, $\mathcal{C}_0^A$ is the canonical
$\mathbb{P}_1(K)$-family of stable tubes of $\Gamma_C$ and
$\mathcal{C}_1^A$ is the canonical $\mathbb{P}_1(K)$-family of
quasi-tubes of $\Gamma_{B^*_1}$, where $B^*_1$ is the quasi-tube
enlargement of Kronecker algebra $C_1$. Note that
$\mathcal{C}_1^A$ contains projective $A$-module but no simple
$A$-modules (see \cite[Example 5.4]{KS1}). Since
$A=\widehat{B}/(\nu_{\widehat{B}})$ is symmetric algebra, there is
a nonzero homomorphism $f:P \rightarrow P$ for projective module
$P \in \mathcal{C}_1^A$ which factorizes through simple module
$\top P$. Hence $f \in \rad^{\infty}_A$ and only $\mathcal{C}_0^A$
satisfies the condition of Theorem 1.1.

\end{document}